\newcommand{\core}{\scriptsize\mbox{\tiny\textcircled{\#}}}
\newcommand{\ep}{\scriptsize\mbox{\tiny\textcircled{$\dagger$}}}
\definecolor{astral}{RGB}{46,116,181}
\newtheorem{theorem}{Theorem}[section]
\newtheorem{lemma}{Lemma}[section]
\newtheorem{corollary}{Corollary}[section]
\newtheorem{proposition}{Proposition}[section]
\newtheorem{definition}{Definition}[section]
\newtheorem{example}{Example}[section]
\newtheorem{remark}{Remark}[section]
\definecolor{darkslategray}{rgb}{0.18, 0.31, 0.31}
\definecolor{warmblack}{rgb}{0.0, 0.26, 0.26}
\journal{arxiv}
\newcommand{\R}{{\mathbb R}}
\newcommand{\mc}[1]{\mathcal {#1}}
\newcommand{\dg}{{\dagger}}
\newcommand{\n}{{*_N}}
\newcommand{\m}{{*_M}}
\def\R{{\cal{R}}}
\def\N{{\cal{N}}}
\newcommand{\rra}[1]{\mathrm{rshrank}({#1})}  
\newcommand{\ra}[1]{\mathrm{rank}({#1})}  
\newcommand{\ind}[1]{\mathrm{ind}({#1})}  
\newcommand{\T}{{\mathrm T}}
\newcommand{\D}{{\mathrm D}}
\numberwithin{equation}{section}
\begin{document}

\begin{frontmatter}

\title{ \textcolor{warmblack}{\bf Core and Core-EP Inverses of Tensors}}

\author{Jajati Kesahri Sahoo$^a$,   Ratikanta Behera$^b$, Predrag S.  Stanimirovi{\'c}$^c$, Vasilios N. Katsikis$^d$, Haifeng Ma$^e$}

\address{$^a$Department of Mathematics,\\
BITS Pilani, K.K. Birla Goa Campus, Goa, India
\\\textit{E-mail}: \texttt{jksahoo\symbol{'100}goa.bits-pilani.ac.in}
\vspace{.3cm}\\
$^b$Department of Mathematics and Statistics,\\
Indian Institute of Science Education and Research Kolkata,\\
 Nadia, West Bengal, India.\\
\textit{E-mail}: \texttt{ratikanta@iiserkol.ac.in}
\vspace{.3cm}\\
$^c$University of Ni\v s, Faculty of Science and Mathematics,\\
Department of Computer Science, \\
Vi\v segradksa 33, 18000 Ni\v s, Serbia;\\
\textit{E-mail}: \texttt{pecko@pmf.ni.ac.rs}\vspace{.3cm}\\
$^d$National and Kapodistrian University of Athens,\\
Sofokleous 1 Street, 10559 Athens, Greece;\\
\textit{E-mail}: \texttt{vaskatsikis@econ.uoa.gr}
\vspace{.3cm}\\
$^e$Harbin Normal University, School of Mathematical Science\\
Harbin 150025, China.\\
\textit{E-mail}: \texttt{haifengma@aliyun.com}
}

\begin{abstract}
\textcolor{warmblack}{
Specific definitions of the core and core-EP inverses of complex tensors are introduced.
Some characterizations, representations and properties of the core and core-EP inverses are investigated.
The results are verified using specific algebraic approach, based on proposed definitions and previously verified properties.
The approach used here is new even in the matrix case.
}
\end{abstract}

\begin{keyword}
Generalized inverse; Core-EP inverse; Core inverse; Tensor; Einstein product.\\
{\em AMS Subject Classifications:} 15A69,15A09.
\end{keyword}

\end{frontmatter}

\section{Introduction, background and motivation}\label{sec1}

The core inverse for matrices was introduced by Baksalary \cite{baks,baks1}.
Several properties of the core inverse and interconnections with other generalized inverses were further studied by many researchers \cite{baks,kurata,rakic,wang2015,xu2017}.
Perturbation bounds of the core inverse were considered in \cite{Ma18}.
An extension of the core inverse in the finite dimensional space to the set of bounded Hilbert space operators were proposed in \cite{rakicAMC}.
Raki\'c {\em et al.} in \cite{rakic} extended the notion of the core inverse from the complex matrix space to an arbitrary $*$-ring.

Prasad, and Raj in \cite{MD} introduced a bordering method for computing the core-EP inverse by relating this generalized inverse with a suitable bordered matrix.
Some further properties of the core-EP inverse were investigated in \cite{ferreyra}.
Prasad, Raj, and Vinay in \cite{CoreIter} introduced an iterative method to approximate the core-EP inverse.
Three limit representations of the core-EP inverse were proposed in \cite{ZhouLimit}.
The notion of the core inverse was generalized to the core-EP inverse.
Many characterizations of the core-EP inverse with other inverses are discussed in \cite{gao2018,prasad}.
The core-EP was extended to rectangular matrices in \cite{ferreyra,gao2019}.
Some new characterizations, representations and the perturbation bounds of the core-EP and the weighted core-EP were investigated in \cite{Ma19,Ma191}.

\smallskip
The tensor inversion and generalized inversion based on different tensor products have been a frequent topic for investigation in the available literature.
The representations and properties of the tensor inverse were considered in \cite{BrazellT}.
Further, Liang et al. in \cite{LiangT} investigated necessary and sufficient conditions for the invertibility of a given tensor.
Representations and properties of the Moore-Penrose inverse of tensors were derived in \cite{Behera} and \cite{SunT}.
Representations for the weighted Moore-Penrose inverse of tensors were considered in \cite{JiT}.
Panigrahy and Mishra in \cite{Panigrahy} investigated the Moore-Penrose inverse of the Einstein product of two tensors.
Ji and Wei \cite{Ji18} investigated the Drazin inverse of even-order square tensors under the Einstein product.

So, it is observable that the core and core-EP inverse are not investigated in the tensor case, so far.
The targets of our research, in the present article, are core and core-EP inverses of tensors.
Corresponding definitions are introduced and main properties are investigated.

\smallskip
Main contributions of this manuscript can be summarized as follows.

\noindent (1) Definitions of the core and core-EP inverses of tensors are introduced.

\noindent (2) Some characterizations and properties of the core and core-EP inverses are investigated.

\noindent (3) The results are verified using one specific algebraic approach, which is new even in the matrix case.

\smallskip
The rest of the paper is organized as follows.
Some necessary notations, useful known results and definitions are presented in Section \ref{Preliminaries}.
Definition, basic properties and representations of the core inverse are considered in Section \ref{SecCore}.
The core inverse of the sum of two tensors is investigated in Subsection \ref{SubsecCoreSUm}.
Section \ref{SecCoreEP} is aimed to the core-EP inverse of a square tensor.
Some properties, representations and characterizations of the core-EP inverse are given.
Illustrative numerical examples are given in Section \ref{SecExamples}.
Concluding remarks are stated in Section \ref{SecConclusion}.

\section{Preliminaries}\label{Preliminaries}



For convenience, we first briefly explain some of the terminologies which will be used here on wards.
We refer to $\mathbb{C}^{I_1\times\cdots\times I_N}$ (resp. $\mathbb{R}^{I_1\times\cdots\times I_N}$) as the set of order $N$ complex (resp. real) tensors.
Particularly, a matrix is a second order tensor, and a vector is a first order tensor.
Each entry of a tensor $\mc{A} \in \mathbb{C}^{I_1\times\cdots\times I_N}$ is denoted by $\mc{A}_{i_1...i_N}$.
Note that throughout the paper, tensors are represented in calligraphic letters like  $\mc{A}$, and the notation $\mc{A}_{i_1...i_N}$ represents the scalars.
The Einstein product (\cite{ein}) $ \mc{A}\n\mc{B}$ of tensors
$\mc{A} \in \mathbb{C}^{I_1\times\cdots\times I_N \times K_1 \times\cdots\times K_N }$ and $\mc{B} \in \mathbb{C}^{K_1\times\cdots\times K_N \times J_1 \times\cdots\times J_M}$ is defined
by the operation $\n$ via
\begin{equation}\label{Eins}
(\mc{A}\n\mc{B})_{i_1...i_Nj_1...j_M}
=\displaystyle\sum_{k_1...k_N}\mc{A}_{{i_1...i_N}{k_1...k_N}}\mc{B}_{{k_1...k_N}{j_1...j_M}}\in \mathbb{C}^{I_1\times\cdots\times I_N \times J_1 \times\cdots\times J_M }.
\end{equation}
Specifically, if $\mc{B} \in \mathbb{C}^{K_1\times\cdots\times K_N}$, then $\mc{A}\n\mc{B} \in \mathbb{R}^{I_1\times\cdots\times I_N}$ and
\begin{equation*}\label{Einsb}
(\mc{A}\n\mc{B})_{i_1...i_N} = \displaystyle\sum_{k_1...k_N} \mc{A}_{{i_1...i_N}{k_1...k_N}}\mc{B}_{{k_1...k_N}}.
\end{equation*}
The Einstein product is discussed in the area of continuum mechanics  \cite{ein} and the theory of relativity \cite{lai}.
Further, the sum of two tensors $\mc{A}, ~\mc{B}\in \mathbb{C}^{I_1\times\cdots\times I_N \times K_1 \times\cdots\times K_N }$ is
\begin{equation}\label{Eins1}
(\mc{A} + \mc{B})_{i_1...i_Nk_1...k_N} =\mc{A}_{{i_1...i_N}{k_1...k_N}} + \mc{A}_{{i_1...i_N}{k_1...k_N}}.
\end{equation}
For a tensor $\mc{A}\!=\!(\mc{A}_{{i_1}...{i_M}{j_1}...{j_N}})  \in \mathbb{R}^{I_1\times\cdots\times I_M \times J_1 \times\cdots\times J_N},$ the tensor
$\mc{A}^\T\!=\!(\mc{A}_{{j_1}...{j_N}{i_1}...{i_M}})\in \mathbb{R}^{J_1\times\cdots\times J_N\times I_1 \times\cdots\times I_M}$ is the {\it transpose} of $\mc{A}$.
The conjugate transpose of a tensor
$\mathcal{A} = (\mathcal{A}_{  i_{1}\ldots i_{M}j_{1} \ldots j_{N}}) \in \mathbb{C}^{\mathbf{I}_{1} \times \cdots \times \mathbf{I}_{M} \times \mathbf{J}_{1} \times \cdots \times \mathbf{J}_{N}}$
is denoted by $\mathcal{A}^{\ast}$ and elementwise defined as
$(\mathcal{A}^{\ast})_{ j_{1} \ldots j_{N} i_{1} \ldots i_M}=(\overline{\mathcal{A}})_{ i_{1} \ldots i_{M} j_{1} \ldots j_N}
\in \mathbb{C}^{\mathbf{J}_{1} \times \cdots \times \mathbf{J}_{N} \times \mathbf{I}_{1} \times \cdots \times \mathbf{I}_{M}}$
where the over-line means the conjugate operator.
 Further, a tensor $\mc{O}$ denotes the {\it zero tensor} if  all the entries are zero.
 A tensor $\mc{A}\in \mathbb{R}^{I_1\times\cdots\times I_N \times I_1 \times\cdots\times I_N}$ is {\it symmetric}
 if  $\mc{A}=\mc{A}^\T,$ {\it skew-symmetric} if $\mc{A}= - \mc{A}^\T$, and {\it orthogonal} if $\mc{A}\m\mc{A}^\T= \mc{A}^\T\n \mc{A}=\mc{I}$.
 A tensor $\mc{A}\in \mathbb{C}^{I_1\times\cdots\times I_N \times I_1 \times\cdots\times I_N}$  is {\it idempotent}  if $\mc{A}\n \mc{A}= \mc{A}$ and {\it tripotent} if $\mc{A}^3= \mc{A}$. 
Further, a tensor $\mc{A}\in \mathbb{C}^{I_1\times\cdots\times I_N \times I_1 \times\cdots\times I_N}$ is called {\it Hermitian idempotent } if $\mc{A}^2=\mc{A}=A^\dagger$ or  $\mc{A}^2=\mc{A}=A^{*}.$

The additional notation
$
I(N)=I_1\times \cdots \times I_N
$
will be useful in increasing the efficiency of the presentation.
Accordingly, the tensor $\mathcal{A} \in \mathbb{C}^{I_{1} \times \cdots \times I_{{M}} \times J_{1} \times \cdots \times J_N}$ will be denoted in a simpler form as $\mathcal{A} \in \mathbb{C}^{I(M) \times J(N)}$.
Tensors of the form $\mathbb{C}^{I(N)\times J(N)}$ are known as even-order tensors.
Following the terminology from \cite{Ji18}, even-order tensors of the shape $\mathbb{C}^{I(N)\times I(N)}$ will be termed as even-order square tensors, or simply {\em square} tensors. 

\begin{definition}
Let $\mc{A}\in \mathbb{C}^{I(N)\times I(N)}$. 
Then the tensor $\mc{A}$ is called EP if $\mc{A}\n\mc{A}^\dagger=\mc{A}^\dagger\n\mc{A}.$
\end{definition}

\begin{definition}
A tensor $\mc{A}\in\mathbb{C}^{I(M) \times J(N)}$ is called partial isometry if $\mc{A}\n\mc{A}^*\m\mc{A}=\mc{A}$ or $\mc{A}^\dagger=\mc{A}^*.$
\end{definition}


The definition of a diagonal tensor follows from \cite{SunT}. 
\begin{definition} {\em \cite{SunT}}
A tensor with entries $\mc{D}_{{i_1}\ldots{i_N}{j_1}\ldots{j_N}}$
  is called a {\it diagonal tensor} if $\mc{D}_{{i_1}\ldots{i_N}{j_1}\ldots{j_N}} = 0$ for $(i_1,\ldots,i_N) \neq (j_1,\ldots,j_N).$
\end{definition}

We  recall the definition of an identity tensor below.

\begin{definition} {\em \cite[Definition 3.13]{BrazellT}}
A tensor with entries
     $\mc{I}_{i_1i_2 \cdots i_Nj_1j_2\cdots j_N} = \prod_{k=1}^{N} \delta_{i_k j_k}$,
   where
\begin{numcases}
{\delta_{i_kj_k}=}
  1, &  $i_k = j_k$,\nonumber
  \\
  0, & $i_k \neq j_k $,\nonumber
\end{numcases}
 is  called a {\it  unit tensor or identity tensor}.
\end{definition}

\begin{definition} {\em \cite[Definition 2.1]{stan}}
The range and null space of a tensor $\mc{A}\in \mathbb{R}^{I(M) \times J(N)}$ are defined as per the following:
$$
\mathfrak{R}(\mc{A}) = \left\{\mc{A}\n\mc{X}:~\mc{X}\in\mathbb{R}^{{J_1}\times\ldots\times{J_N}}\right\}\mbox{ and }
\mc{N}(\mc{A})=\left\{\mc{X}:~\mc{A}\n\mc{X}=\mc{O}\in\mathbb{R}^{{I_1}\times \ldots \times {I_M}}\right\},
$$
where $\mathcal{O}$ is an appropriate zero tensor.
\end{definition}

The index of a tensor $\mc{A}\in\mathbb{R}^{I(N)\times I(N)}$ is defined as the smallest positive integer $k,$ such that $\mathfrak{R}(\mc{A}^k)=\mathfrak{R}(\mc{A}^{k+1}).$
If $k=1,$ then the tensor is called index one or group, or core tensor.
The index of a square tensor $\mc{A}$ is denoted as $\ind{\mc{A}}$.


A tensor $\mathcal{A} \in \mathbb{C}^{{I}_{1} \times \cdots \times {I}_{{N}} \times {I}_{1} \times \cdots \times {I}_{{N}}}$ is invertible
if there exists a tensor $\mathcal{X}$ such that $\mathcal{A}\ast_{{N}}\mathcal{X} = \mathcal{X}\ast_{{N}}\mathcal{A} = \mathcal{I}$.
In this case, $\mathcal{X}$ is called the inverse of $\mathcal{A}$ and denoted by $\mathcal{A}^{-1}$.

\begin{lemma} {\em \cite[Lemma 2.2.]{stan}}\label{range-stan}
Let  $\mc{A}\in \mathbb{R}^{{I_1}\times \cdots\times {I_M}\times {J_1}\times\cdots\times {J_N}}$,
$\mc{B}\in \mathbb{R}^{{I_1}\times \cdots\times {I_M}\times {K_1}\times\cdots\times {K_L}}.$
Then $\mathfrak{R}(\mc{B})\subseteq\mathfrak{R}(\mc{A})$ if and only if there exists
$\mc{U}\in \mathbb{R}^{{J_1}\times \ldots\times{J_N}\times {K_1}\times\ldots\times{K_L}}$ such that
$\mc{B}=\mc{A}\n\mc{U}.$
\end{lemma}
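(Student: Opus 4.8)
The plan is to prove both implications directly from the definition of the range and the component formula \eqref{Eins} for the Einstein product, mirroring the classical matrix equivalence $\mathfrak{R}(B)\subseteq\mathfrak{R}(A)\iff B=AU$. Throughout, I keep in mind that elements of $\mathfrak{R}(\mc{A})$ and $\mathfrak{R}(\mc{B})$ both live in $\mathbb{R}^{I_1\times\cdots\times I_M}$, so that the inclusion is meaningful: here $\mathfrak{R}(\mc{A})=\{\mc{A}\n\mc{X}:\mc{X}\in\mathbb{R}^{J_1\times\cdots\times J_N}\}$ contracting the $J$-indices via $\n$, while $\mathfrak{R}(\mc{B})=\{\mc{B}\kl\mc{Z}:\mc{Z}\in\mathbb{R}^{K_1\times\cdots\times K_L}\}$ contracting the $K$-indices via $\kl$.

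For the easy direction, suppose $\mc{B}=\mc{A}\n\mc{U}$ for some $\mc{U}\in\mathbb{R}^{J(N)\times K(L)}$. Take any $\mc{Y}\in\mathfrak{R}(\mc{B})$, so that $\mc{Y}=\mc{B}\kl\mc{Z}$ for some $\mc{Z}\in\mathbb{R}^{K_1\times\cdots\times K_L}$. Then associativity of the Einstein product gives $\mc{Y}=(\mc{A}\n\mc{U})\kl\mc{Z}=\mc{A}\n(\mc{U}\kl\mc{Z})$, and since $\mc{U}\kl\mc{Z}\in\mathbb{R}^{J_1\times\cdots\times J_N}$ this exhibits $\mc{Y}\in\mathfrak{R}(\mc{A})$. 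The only ingredient is associativity of $\n$ and $\kl$, which follows by reindexing the nested summations in \eqref{Eins}; I would invoke this as an elementary property of the Einstein product.

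For the converse, the key idea is to build $\mc{U}$ slice by slice. For each multi-index $(k_1,\ldots,k_L)$, let $\mc{E}^{(k_1\ldots k_L)}\in\mathbb{R}^{K_1\times\cdots\times K_L}$ be the tensor with a $1$ in position $(k_1,\ldots,k_L)$ and zeros elsewhere. Then $\mc{B}\kl\mc{E}^{(k_1\ldots k_L)}$ extracts the corresponding column slice of $\mc{B}$ and lies in $\mathfrak{R}(\mc{B})\subseteq\mathfrak{R}(\mc{A})$, so there exists $\mc{V}^{(k_1\ldots k_L)}\in\mathbb{R}^{J_1\times\cdots\times J_N}$ with $\mc{B}\kl\mc{E}^{(k_1\ldots k_L)}=\mc{A}\n\mc{V}^{(k_1\ldots k_L)}$. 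I then assemble these slices into a single tensor $\mc{U}\in\mathbb{R}^{J(N)\times K(L)}$ by setting $\mc{U}_{j_1\ldots j_N k_1\ldots k_L}=(\mc{V}^{(k_1\ldots k_L)})_{j_1\ldots j_N}$. A direct component computation with \eqref{Eins}, holding $(k_1,\ldots,k_L)$ fixed, yields $(\mc{A}\n\mc{U})_{i_1\ldots i_M k_1\ldots k_L}=(\mc{A}\n\mc{V}^{(k_1\ldots k_L)})_{i_1\ldots i_M}=\mc{B}_{i_1\ldots i_M k_1\ldots k_L}$, so $\mc{B}=\mc{A}\n\mc{U}$.

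I expect the main obstacle to be purely notational rather than conceptual: carefully verifying the slice-extraction identity $(\mc{B}\kl\mc{E}^{(k_1\ldots k_L)})_{i_1\ldots i_M}=\mc{B}_{i_1\ldots i_M k_1\ldots k_L}$ and then checking that the reassembled $\mc{U}$ reproduces $\mc{B}$ under $\n$, all while keeping the two families of contracted indices ($J$ versus $K$) straight. Once the associativity of the Einstein product and this column-slice bookkeeping are in hand, both directions are routine.
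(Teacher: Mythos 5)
Your proof is correct, but there is nothing in this paper to compare it against: Lemma \ref{range-stan} is stated here without proof, imported verbatim from \cite[Lemma 2.2]{stan}. Within the toolkit of that source (and of this paper), the natural argument is quite different from yours: one applies the reshaping isomorphism of Definition \ref{Defrsh}, which sends $\mc{A}$ and $\mc{B}$ to matrices $A$ and $B$ of sizes $(I_1\cdots I_M)\times (J_1\cdots J_N)$ and $(I_1\cdots I_M)\times (K_1\cdots K_L)$, converts Einstein products into ordinary matrix products, and identifies $\mathfrak{R}(\mc{A})$, $\mathfrak{R}(\mc{B})$ with the column spaces of $A$, $B$; the lemma then collapses to the classical matrix equivalence $\mathfrak{R}(B)\subseteq\mathfrak{R}(A)\Leftrightarrow B=AU$, with $\mc{U}=\mathrm{rsh}^{-1}(U)$. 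Your route is instead fully component-level: the mixed associativity $(\mc{A}\n\mc{U})\kl\mc{Z}=\mc{A}\n(\mc{U}\kl\mc{Z})$ (valid by interchanging the finite nested sums in \eqref{Eins}) gives the easy direction, and for the converse you extract the slices $\mc{B}\kl\mc{E}^{(k_1\ldots k_L)}$, pull each back through $\mathfrak{R}(\mc{B})\subseteq\mathfrak{R}(\mc{A})$ to obtain $\mc{V}^{(k_1\ldots k_L)}$, and reassemble these into $\mc{U}$; the concluding computation $(\mc{A}\n\mc{U})_{i_1\ldots i_M k_1\ldots k_L}=(\mc{A}\n\mc{V}^{(k_1\ldots k_L)})_{i_1\ldots i_M}=\mc{B}_{i_1\ldots i_M k_1\ldots k_L}$ is exactly right, because the $K$-indices are spectators in the product $\mc{A}\n\mc{U}$. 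What your approach buys is self-containedness: it needs only \eqref{Eins} and the paper's definition of range, with no appeal to matrix theory or the $\mathrm{rsh}$ machinery, at the cost of some index bookkeeping. What the reshaping route buys is brevity and reuse: once $\mathrm{rsh}$ is known to respect products and ranges, this lemma and many of its relatives become one-line corollaries of their matrix ancestors. Both are sound, and yours has the additional merit of making visible that the statement is elementary combinatorial bookkeeping rather than something requiring the matrix transfer.
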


Let $\mc{A}\in \mathbb{C}^{I(M) \times J(N)},$ $\mc{X}\in \mathbb{C}^{J(N) \times I(M)}$ and consider the following conditions
\[
\aligned
& (1^T)\quad  \mathcal{A}\ast_{{N}}\mathcal{X}\ast_{{N}}\mathcal{A} = \mathcal{A};\qquad\quad \,
(2^T)\quad  \mathcal{X}\ast_{{N}}\mathcal{A}\ast_{{N}}\mathcal{X} = \mathcal{X};\\
&(3^T)\quad (\mathcal{A}\ast_{{N}}\mathcal{X})^{\ast} = \mathcal{A}\ast_{{N}}\mathcal{X};\qquad
(4^T)\quad (\mathcal{X}\ast_{{N}}\mathcal{A})^{\ast} = \mathcal{X}\ast_{{N}}\mathcal{A}.
\endaligned
\]
\begin{definition} {\em \cite[Definition 2.2]{SunT}}
The tensor $\mc{X}$ satisfying the tensor equations:
\begin{enumerate}
\item[] $(1^T)$ is called inner or generalized inverse of $\mc{A}$ and denoted by $\mc{A}^{(1^T)}.$
\item[] $(2^T)$ is called outer inverse of $\mc{A}$ and denoted by $\mc{A}^{(2)}.$
\item[] $(1^T)$ and $(2^T)$ is called reflexive generalized inverse of $\mc{A}$ and denoted by $\mc{A}^{(1^T,2^T)}.$
\item[] $(1^T)$--$(4^T)$ is called Moore-Penrose inverse of $\mc{A}$ and denoted by $\mc{A}^{\dagger}.$
\end{enumerate}
\end{definition}
We use the notation $\mc{A}\{1^T\}$, $\mc{A}\{1^T,2^T\}$ and $\mc{A}\{1^T,3^T\}$ respectively for the set of inner inverses, reflexive generalized inverses of $\mc{A}$, and $\{1,3\}$ inverses of $\mc{A}$.
Similarly, the sets $\mc{A}\{1^T,4^T\}$ and $\mc{A}\{1^T,2^T, 3^T\}$ are defined.
The group and Drazin inverse are defined as follows for an even-order square tensor $\mc{A}\in \mathbb{C}^{I(N) \times I(N)}$.

\begin{definition}
let $\mc{A}\in \mathbb{C}^{I(N) \times I(N)}$ be a core tensor.
A tensor $\mc{X}\in \mathbb{C}^{I(N) \times I(N)}$ satisfying
$$
(1^T) \ \mc{A}\n\mc{X}\n\mc{A}=\mc{A},\quad (2^T) \ \mc{X}\n\mc{A}\n\mc{X}=\mc{X},\quad (5^T)\ \mc{A}\n\mc{X}=\mc{X}\n\mc{A},
$$
is called the group inverse of $\mc{A}$, and it is denoted by $\mc{A}^{\#}.$
\end{definition}

\begin{definition} {\em \cite{Ji18}}
Let $\mc{A}\in \mathbb{C}^{I(N) \times I(N)}$ with ind$(\mc{A})=k.$
A tensor $\mc{X}\in \mathbb{C}^{I(N) \times I(N)}$ satisfying
$$
\quad ((1^k)^T)\  \mc{A}^{k+1}\n\mc{X}=\mc{A}^k, \quad (2^T) \ \mc{X}\n\mc{A}\n\mc{X}=\mc{X},\quad (5^T) \ \mc{X}\n\mc{A}=\mc{A}\n\mc{X}
$$
 is called the Drazin inverse of $\mc{A}$ and it is denoted by $\mc{A}^\D.$
\end{definition}
It is important to mention that the matrix equations corresponding to $(1)^T$,$(2)^T$,$(3)^T$,$(4)^T$,$(5)^T$,$(1^k)^T$ are denoted by $(1)$,$(2)$,$(3)$,$(4)$,$(5)$,$(1^k)$, respectively.

The orthogonal projection onto a subspace $\mathcal{R}(\mathcal{A})$ is denoted by ${P}_{\mathcal{R}(\mathcal{A})}$ and defined as
${P}_{\mathcal{R}(\mathcal{A})} = \mathcal{A}\ast_{{N}}\mathcal{A}^\dagger.$

One specific and useful algorithm for a proper matricization of an arbitrary tensor was proposed in \cite{stan}.

\begin{definition}\label{Defrsh} {\em \cite{stan}}
Let $I_1,\ldots ,I_M,K_1,\ldots ,K_N$ be given integers and $\mathfrak{I}, \mathfrak{K}$ are the integers defined as
\begin{equation}\label{IK}
\mathfrak{I}=I_{1}I_{2}\cdots I_{M}, \ \ \mathfrak{K}=K_{1}K_{2}\cdots K_{N}.
\end{equation}
The reshaping operation
\[
\mathrm{rsh}: \mathbb{C}^{I({M}) \times K({N})}\mapsto \mathbb C^{\mathfrak{I}\times \mathfrak{K}}
\]
transforms a tensor $\mathcal{A}\in \mathbb{C}^{I({M}) \times K({N})}$
into the matrix $A\in \mathbb C^{\mathfrak{I}\times \mathfrak{K}}$ using the {\em Matlab} function {\tt reshape} as follows:
\[
\mathrm{rsh}\left(\mathcal{A}\right)=A=\mathrm{reshape}(\mathcal{A},\mathfrak{I},\mathfrak{K}),\ \ \mathcal{A}\in \mathbb{C}^{I({M}) \times K({N})},\ A\in \mathbb C^{\mathfrak{I}\times \mathfrak{K}}.
\]

\noindent
The inverse reshaping of $A\in \mathbb C^{\mathfrak{I}\times \mathfrak{K}}$ is the tensor $\mathcal{A}\in \mathbb{C}^{I({M}) \times K({N})}$ defined by
$$
\aligned
\mathrm{rsh}^{-1}(A)&=\mathcal{A}=\mathrm{reshape}(A,I_1,\ldots ,I_M,K_1,\ldots ,K_N).
\endaligned
$$
\end{definition}

Also, an appropriate definition of the tensor rank, arising from the reshaping operation, was proposed in \cite{stan}.
\begin{definition}\label{DerTRank} {\em \cite{stan}}
Let $\mathcal{A}\in \mathbb{C}^{I({N}) \times K({N})}$ and
$A=reshape \left(\mathcal{A},\mathfrak{I}, \mathfrak{K}\right)=\mathrm{rsh}(\mathcal A)\in \mathbb C^{\mathfrak{I}\times \mathfrak{K}}$.
Then the tensor rank of $\mathcal{A}$, denoted by $\mathrm{rshrank}(\mathcal{A})$, is defined by $\rra{\mathcal{A}}=\ra{A}$.
\end{definition}

\section{Tensor core inverse}\label{SecCore}

Let $A$ be an $n\times n$ complex matrix of index $1$.
The core inverse $A^{\tiny\textcircled{\#}} \in \mathbb{C}^{n \times n}$ of $A\in \mathbb{C}^{n \times n}$ was introduced in \cite[Definition 1]{baks} as the matrix satisfying
\begin{equation}\label{DefCore}
AA^{\tiny\textcircled{\#}}=P_A, \qquad \mathfrak{R}(A^{\tiny\textcircled{\#}}) \subseteq  \mathfrak{R}(A),
\end{equation}
where $P_A~(= AA^{\dagger})$ denotes an orthogonal projection onto the range $\mathfrak{R}(A)$.
Various characterizations of the core inverse, including its correlations with another generalized inverses, were originated in \cite{baks}.
The results obtained in \cite{baks} were developed on the basis of the Hartwig and Spindelb\"{o}ck decomposition.
In \cite{rakicAMC}, the authors introduced an equivalent definition of the core inverse, by proving in Theorem 3.1. that \eqref{DefCore} is equivalent to
\begin{equation}\label{DefCoreH}
AA^{\tiny\textcircled{\#}}A=A,\ \ \R(A^{\tiny\textcircled{\#}})=\R(A),\ \ \N(A^{\tiny\textcircled{\#}})=\N(A^*).
\end{equation}
Analogous definition of the core inverse in a ring $R$ with involution was introduced in \cite{rakic}:
\begin{equation}\label{DefCoreR}
a a^{\tiny\textcircled{\#}}a=a,\ \ a^{\tiny\textcircled{\#}}R=aR,\ \ R a^{\tiny\textcircled{\#}}=R a^*.
\end{equation}
Finally, the core inverse in the set
$
\mathbb{C}^{CM}_n= \{ A \in \mathbb{C}^{n \times n} |\ {\rm rank} (A^2)= {\rm rank}(A) \}.
$
was also defined \cite{rakic}:
\begin{equation}\label{DefCoreCM}
A A^{\tiny\textcircled{\#}}A=A,\ \ A^{\tiny\textcircled{\#}}\, \mathbb{C}^{CM}_n=A\, \mathbb{C}^{CM}_n,\ \ \mathbb{C}^{CM}_n\, A^{\tiny\textcircled{\#}}=\mathbb{C}^{CM}_n\, A^*.
\end{equation}
In \cite[Theorem 3.1]{xu2017}, the authors introduced the following representation of the core inverse in a ring $R$ with involution:
\begin{equation}\label{DefCoreR1}
(a a^{\tiny\textcircled{\#}})^*=a a^{\tiny\textcircled{\#}},\ \ a^{\tiny\textcircled{\#}}a^2=a,\ \ a \left(a^{\tiny\textcircled{\#}}\right)^2=a^{\tiny\textcircled{\#}}.
\end{equation}

\smallskip
In this section, we will introduce the core inverse of tensors and investigate its properties and representations.
Also, several characterizations of the tensor core inverse as well as some relationships with other generalized inverses will be discussed.

A tensor $\mc{A}$ is said to be a {\em core invertible} or a {\em core tensor} if the core inverse of $\mc{A}$ exists.
For this purpose, we will say that a tensor $\mathcal{A}$ is a {\em core tensor} if its reshaping index is equal to $1$, i.e.,
$$\rra{\mathcal{A}}=\rra{\mathcal{A}^2}\Longleftrightarrow \ra{\mathrm{rsh}(\mathcal A)}=\ra{\mathrm{rsh}(\mathcal A)^2}.$$

In the present paper the tensor core inverse is introduced in Definition \ref{CoreDefT}, using the approach analogous to \eqref{DefCoreR1}.

\begin{definition}\label{CoreDefT}
Let $\mc{A}\in \mathbb{C}^{I(N) \times I(N)}$ be a given core tensor.
A tensor $\mc{X}\in \mathbb{C}^{I(N) \times I(N)}$ satisfying
\begin{enumerate}
    \item[\rm $(C1)$] $\mc{X}\n\mc{A}^2=\mc{A}$
    \item[\rm $(C2)$] $\mc{A}\n\mc{X}^2=\mc{X}$
    \item[\rm $(3^T)$] $(\mc{A}\n\mc{X})^*=\mc{A}\n\mc{X}$
\end{enumerate}
is called the core inverse of $\mc{A}$ and denoted by $\mc{A}^{\core}$.
\end{definition}

It is known that the core inverse $A^{\tiny\textcircled{\#}}$ of a square matrix $A$ is the unique $\{1,2\}$-inverse which satisfies $\R(A^{\tiny\textcircled{\#}})=\R(A)$, $\N(A^{\tiny\textcircled{\#}})=\N(A^*)$.
The goal of Lemma \ref{LemTen12} is to verify that the tensor core inverse, introduced by the tensor equations $(C1)$, $(C2)$, $(3^T)$ in Definition \ref{CoreDefT}, satisfies $(1^T)$ and $(2^T)$.

\begin{lemma}\label{LemTen12}
Let $\mc{A}\in \mathbb{C}^{I(N) \times I(N)}$ be given.
Then $\mc{A}^{\core}$ satisfies $(1^T)$ and $(2^T)$.
\end{lemma}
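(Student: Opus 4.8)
The plan is to extract $(1^T)$ and $(2^T)$ from the defining equations $(C1)$, $(C2)$, $(3^T)$, writing $\mc{X}=\mc{A}^{\core}$ throughout. A useful preliminary observation guiding the write-up is that $(2^T)$ and $(1^T)$ can in fact be obtained from $(C1)$ and $(C2)$ alone, together with the standing hypothesis that $\mc{A}$ is a core tensor; the Hermitian condition $(3^T)$ plays no role in this particular lemma.

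First I would settle $(2^T)$ by a direct substitution. In $\mc{X}\n\mc{A}\n\mc{X}$, rewrite the trailing $\mc{X}$ via $(C2)$ as $\mc{A}\n\mc{X}^2$, which gives $\mc{X}\n\mc{A}\n\mc{X}=\mc{X}\n\mc{A}^2\n\mc{X}^2$; then collapse the prefix $\mc{X}\n\mc{A}^2$ to $\mc{A}$ using $(C1)$, leaving $\mc{A}\n\mc{X}^2$, which is $\mc{X}$ once more by $(C2)$. Thus $\mc{X}\n\mc{A}\n\mc{X}=\mc{X}$, establishing $(2^T)$.

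The equation $(1^T)$ is the genuine obstacle. Left-multiplying $(C1)$ by $\mc{A}$ produces $(\mc{A}\n\mc{X})\n\mc{A}^2=\mc{A}^2$, but I cannot simply cancel a factor $\mc{A}$ on the right to conclude $(\mc{A}\n\mc{X})\n\mc{A}=\mc{A}$, since the Einstein product admits no such cancellation law. The resolution is a range argument driven by the core-tensor hypothesis. Because $\mc{A}$ is a core tensor, $\rra{\mc{A}}=\rra{\mc{A}^2}$, i.e. $\ra{\mathrm{rsh}(\mc{A})}=\ra{\mathrm{rsh}(\mc{A})^2}$; since the column space of a square matrix always contains that of its square, equality of ranks forces $\mathfrak{R}(\mc{A}^2)=\mathfrak{R}(\mc{A})$. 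In particular $\mathfrak{R}(\mc{A})\subseteq\mathfrak{R}(\mc{A}^2)$, so Lemma \ref{range-stan} furnishes a tensor $\mc{W}$ with $\mc{A}=\mc{A}^2\n\mc{W}$.

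With this in hand I would close the argument: substituting $\mc{A}=\mc{A}^2\n\mc{W}$ into the last factor and invoking the identity $(\mc{A}\n\mc{X})\n\mc{A}^2=\mc{A}^2$ obtained above yields
\[
\mc{A}\n\mc{X}\n\mc{A}=(\mc{A}\n\mc{X})\n\mc{A}^2\n\mc{W}=\mc{A}^2\n\mc{W}=\mc{A},
\]
which is exactly $(1^T)$. The crux of the whole proof, and the only nonroutine step, is this passage from $\mc{A}^2$ back to $\mc{A}$: it is licensed neither by algebra alone nor by $(3^T)$, but precisely by the core-tensor assumption through the range identity $\mathfrak{R}(\mc{A})=\mathfrak{R}(\mc{A}^2)$ together with Lemma \ref{range-stan}.
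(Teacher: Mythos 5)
Your proof is valid, but your route for $(1^T)$ is genuinely different from the paper's, and your central meta-claim --- that $(1^T)$ ``is licensed neither by algebra alone nor by $(3^T)$'' --- is false. The paper obtains $(1^T)$ by exactly the substitution trick you used for $(2^T)$, run in the dual direction: replace the trailing $\mc{A}$ in $\mc{A}\n\mc{X}\n\mc{A}$ by $\mc{X}\n\mc{A}^2$ using $(C1)$, collapse the resulting prefix $\mc{A}\n\mc{X}^2$ to $\mc{X}$ using $(C2)$, and finish with $(C1)$ once more:
\[
\mc{A}\n\mc{X}\n\mc{A}=\mc{A}\n\mc{X}\n\left(\mc{X}\n\mc{A}^2\right)=\left(\mc{A}\n\mc{X}^2\right)\n\mc{A}^2=\mc{X}\n\mc{A}^2=\mc{A}.
\]
So $(C1)$ and $(C2)$ alone yield both $(1^T)$ and $(2^T)$ purely equationally, with no appeal to the core-tensor hypothesis, to $(3^T)$, or to any range argument; the two conditions are symmetric under exchanging the roles of $\mc{A}$ and $\mc{X}$, which is why the trick you found for $(2^T)$ transfers verbatim. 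Your alternative for $(1^T)$ --- deducing $\mathfrak{R}(\mc{A})=\mathfrak{R}(\mc{A}^2)$ from $\rra{\mc{A}}=\rra{\mc{A}^2}$, invoking Lemma \ref{range-stan} to write $\mc{A}=\mc{A}^2\n\mc{W}$, and then computing $\mc{A}\n\mc{X}\n\mc{A}=\left(\mc{A}\n\mc{X}\n\mc{A}^2\right)\n\mc{W}=\mc{A}^2\n\mc{W}=\mc{A}$ --- is correct, granting the standard fact (from the reshaping isomorphism of \cite{stan}) that the matrix rank equality transfers to the tensor range equality. But it consumes a hypothesis the paper's argument shows to be unnecessary, and it leans on Lemma \ref{range-stan}, which is stated there only for real tensors. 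What the paper's purely algebraic proof buys is a strictly stronger statement: \emph{any} tensor $\mc{X}$ satisfying $(C1)$ and $(C2)$ is automatically a $\{1^T,2^T\}$-inverse of $\mc{A}$, regardless of index considerations; this equational style is precisely the ``new approach even in the matrix case'' that the paper advertises. What your approach buys is a conceptual link between the defining equations and the range characterization $\mathfrak{R}(\mc{A})=\mathfrak{R}(\mc{A}^2)$, in the spirit of Theorem \ref{rangequiv}, at the cost of extra machinery.
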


\begin{proof}
The proof follows after the following verification:
\begin{eqnarray}
    \mc{A}\n\mc{A}^{\core}\n\mc{A}\!\!\!\!&=&\!\!\!\!\mc{A}\n\mc{A}^{\core}\n\mc{A}^{\core}\n\mc{A}^2=\mc{A}\n(\mc{A}^{\core})^2\n\mc{A}^2=\mc{A}^{\core}\n\mc{A}^2=\mc{A};\label{eq3.1}\\
    \mc{A}^{\core}\n\mc{A}\n\mc{A}^{\core}\!\!\!\!&=&\!\!\!\!\mc{A}^{\core}\n\mc{A}\n\mc{A}\n(\mc{A}^{\core})^2=\mc{A}^{\core}\n\mc{A}^2\n(\mc{A}^{\core})^2=\mc{A}\n(\mc{A}^{\core})^2=\mc{A}^{\core}.\label{eq3.2}
\end{eqnarray}
\end{proof}

The uniqueness of the core inverse is proved in Theorem \ref{ThmCoreUniq}.
\begin{theorem}\label{ThmCoreUniq}
An arbitrary core invertible tensor $\mc{A}\in \mathbb{C}^{I(N)\times I(N)}$ has one and only one core inverse.
\end{theorem}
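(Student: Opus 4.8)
The plan is to take two tensors $\mc{X}$ and $\mc{Y}$ that both satisfy the defining conditions $(C1)$, $(C2)$ and $(3^T)$ of Definition \ref{CoreDefT}, and to deduce $\mc{X}=\mc{Y}$. By Lemma \ref{LemTen12}, each of $\mc{X}$ and $\mc{Y}$ additionally satisfies $(1^T)$ and $(2^T)$, so both behave as reflexive generalized inverses of $\mc{A}$ whose Hermitian factor is $\mc{A}\n\mc{X}$ (respectively $\mc{A}\n\mc{Y}$). These extra equations are what I would exploit throughout.

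First I would show that this Hermitian factor is forced, that is, $\mc{A}\n\mc{X}=\mc{A}\n\mc{Y}$. The idea is that $(1^T)$ gives $\mc{A}\n\mc{X}\n\mc{A}=\mc{A}$ and $\mc{A}\n\mc{Y}\n\mc{A}=\mc{A}$, whence $(\mc{A}\n\mc{X})\n(\mc{A}\n\mc{Y})=\mc{A}\n\mc{Y}$ and $(\mc{A}\n\mc{Y})\n(\mc{A}\n\mc{X})=\mc{A}\n\mc{X}$. Applying the conjugate transpose to the first identity, and using that $\mc{A}\n\mc{X}$ and $\mc{A}\n\mc{Y}$ are Hermitian by $(3^T)$ together with the reversal rule $(\mc{U}\n\mc{V})^*=\mc{V}^*\n\mc{U}^*$, turns $(\mc{A}\n\mc{X})\n(\mc{A}\n\mc{Y})$ into $(\mc{A}\n\mc{Y})\n(\mc{A}\n\mc{X})$, which by the second identity equals $\mc{A}\n\mc{X}$; comparing the two sides yields $\mc{A}\n\mc{Y}=\mc{A}\n\mc{X}$. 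Conceptually this is just the uniqueness of the orthogonal projector onto $\mathfrak{R}(\mc{A})$, since each factor is a Hermitian idempotent with range $\mathfrak{R}(\mc{A})$, but the conjugate-transpose manipulation avoids any explicit appeal to subspaces.

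With $\mc{A}\n\mc{X}=\mc{A}\n\mc{Y}$ in hand, I would recover $\mc{X}$ from this common factor. Starting from $(2^T)$, $\mc{X}=\mc{X}\n\mc{A}\n\mc{X}=\mc{X}\n\mc{A}\n\mc{Y}$. Substituting $(C2)$ for $\mc{Y}$, namely $\mc{Y}=\mc{A}\n\mc{Y}^2$, gives $\mc{X}\n\mc{A}\n\mc{Y}=\mc{X}\n\mc{A}^2\n\mc{Y}^2$, and now $(C1)$ for $\mc{X}$, namely $\mc{X}\n\mc{A}^2=\mc{A}$, collapses this to $\mc{A}\n\mc{Y}^2$, which is again $\mc{Y}$ by $(C2)$. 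Hence $\mc{X}=\mc{Y}$.

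I expect the only genuine obstacle to be the first step, establishing $\mc{A}\n\mc{X}=\mc{A}\n\mc{Y}$, because the remaining work is a direct chain of substitutions using $(C1)$, $(C2)$ and $(2^T)$. The two facts that make the argument go through are the inner- and outer-inverse properties supplied by Lemma \ref{LemTen12} and the compatibility of the Einstein product with the conjugate transpose, both of which I would take as available.
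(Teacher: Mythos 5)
Your proposal is correct and takes essentially the same route as the paper: both proofs first establish that the Hermitian factor is forced ($\mc{A}\n\mc{X}=\mc{A}\n\mc{Y}$) using $(1^T)$ from Lemma \ref{LemTen12} together with $(3^T)$ and the reversal rule for the conjugate transpose, and then conclude with the identical chain $\mc{X}=\mc{X}\n\mc{A}\n\mc{X}=\mc{X}\n\mc{A}\n\mc{Y}=\mc{X}\n\mc{A}^2\n\mc{Y}^2=\mc{A}\n\mc{Y}^2=\mc{Y}$ using $(2^T)$, $(C1)$ and $(C2)$. The only cosmetic difference is that you derive the equality of the two projectors by transposing the identity $(\mc{A}\n\mc{X})\n(\mc{A}\n\mc{Y})=\mc{A}\n\mc{Y}$ and comparing with its mirror, whereas the paper runs a single direct chain of conjugate-transpose manipulations; the underlying argument is the same.
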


\begin{proof}

Let $\mc{X}_1$ and $\mc{X}_2$ be two tensors satisfying $(C1)$, $(C2)$, $(3^T)$.
Using these properties in conjunction with (\ref{eq3.1}) and (\ref{eq3.2}), it follows that
\begin{equation*}
\aligned
\mc{A}\n\mc{X}_1&=\mc{A}\n\mc{X}_2\n\mc{A}\n\mc{X}_1=(\mc{A}\n\mc{X}_2)^*\n(\mc{A}\n\mc{X}_1)^*=\mc{X}_2^*\n\mc{A}^*\n\mc{X}_1^*\n\mc{A}^*\\
&=\mc{X}_2^*\n(\mc{A}\n\mc{X}_1\n\mc{A})^*=\mc{X}_2^*\n\mc{A}^*=(\mc{A}\n\mc{X}_2)^*=\mc{A}\n\mc{X}_2.
\endaligned
\end{equation*}
Thus
$$\mc{X}_1=\mc{X}_1\n\mc{A}\n\mc{X}_1=\mc{X}_1\n\mc{A}\n\mc{X}_2=\mc{X}_1\n\mc{A}^2\n\mc{X}_2^2=\mc{A}\n\mc{X}_2^2=\mc{X}_2.$$
Hence, the statement is proved.
\end{proof}

Corollary \ref{prep3.1} can be obtained by combining (\ref{eq3.1}), (\ref{eq3.2}) and $(3^T)$.

\begin{corollary}\label{prep3.1}
Let $\mc{A}\in \mathbb{C}^{I(N) \times I(N)}$ be a core tensor, then $\mc{A}^{\core}\in\mc{A}\{1^T,2^T,3^T\}.$
\end{corollary}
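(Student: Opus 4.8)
The plan is to show that the core inverse $\mc{A}^{\core}$, defined through the three equations $(C1)$, $(C2)$, and $(3^T)$, belongs to the set $\mc{A}\{1^T, 2^T, 3^T\}$, which means verifying that it satisfies the three defining conditions of that set: $(1^T)$ the inner-inverse condition $\mc{A}\n\mc{A}^{\core}\n\mc{A}=\mc{A}$, $(2^T)$ the outer-inverse condition $\mc{A}^{\core}\n\mc{A}\n\mc{A}^{\core}=\mc{A}^{\core}$, and $(3^T)$ the Hermitian condition $(\mc{A}\n\mc{A}^{\core})^*=\mc{A}\n\mc{A}^{\core}$.

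The strategy is essentially bookkeeping, since all three required facts have already been established in the excerpt. First I would invoke Lemma \ref{LemTen12}, which guarantees precisely that $\mc{A}^{\core}$ satisfies $(1^T)$ and $(2^T)$; the proof of that lemma is carried out explicitly in equations (\ref{eq3.1}) and (\ref{eq3.2}). Second, condition $(3^T)$, namely $(\mc{A}\n\mc{A}^{\core})^*=\mc{A}\n\mc{A}^{\core}$, is one of the three defining properties imposed on $\mc{A}^{\core}$ in Definition \ref{CoreDefT}, so it holds by definition. Collecting $(1^T)$, $(2^T)$, and $(3^T)$ together yields exactly the membership $\mc{A}^{\core}\in\mc{A}\{1^T, 2^T, 3^T\}$, as the set notation was introduced earlier.

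I do not anticipate any genuine obstacle here, since the corollary is a direct assembly of results already proved: the two outer/inner conditions come from the preceding lemma and the third comes verbatim from the definition. The only point requiring care is to confirm that the set $\mc{A}\{1^T, 2^T, 3^T\}$ is defined exactly as the collection of tensors satisfying $(1^T)$, $(2^T)$, and $(3^T)$ simultaneously, which matches the notation established after the Moore-Penrose definition, so no reconciliation of conventions is needed. The entire argument therefore reduces to citing Lemma \ref{LemTen12} for the first two conditions and Definition \ref{CoreDefT} for the third.
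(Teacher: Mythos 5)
Your proposal is correct and matches the paper's own argument exactly: the paper obtains Corollary \ref{prep3.1} by combining equations (\ref{eq3.1}) and (\ref{eq3.2}) from Lemma \ref{LemTen12} (which give $(1^T)$ and $(2^T)$) with the condition $(3^T)$ that $\mc{A}^{\core}$ satisfies by Definition \ref{CoreDefT}. Nothing is missing; this is the intended one-line assembly.
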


Theorem \ref{thm3.1} gives some characterizations of the core inverse in terms of other generalized inverses and generalizes Theorem 1 from \cite{baks}.
It is important to mention that the results of Theorem \ref{thm3.1}  are verified using one specific algebraic approach, which is new even in the matrix case.

\begin{theorem}\label{thm3.1}
Let $\mc{A}\in \mathbb{C}^{I(N) \times I(N)}$ be a core tensor. Then the following holds:
\begin{enumerate}
    \item[\bf (a)] $\mc{A}^{\core}=\mc{A}^{\#}\n\mc{A}\n\mc{A}^\dg=\mc{A}^{\#}\n{P}_{\mathcal{R}(\mathcal{A})}=\mc{A}\n\mc{A}^{\#}\n\mc{A}^\dg;$
    \item[\bf(b)] $\mc{A}^n\n \mc{A}^{\core}=\mc{A}^n\n \mc{A}^{\dg};$
    \item[\bf (c)] $\left(\mc{A}^{\core}\right)^\dg=\mc{A}^2\n\mc{A}^\dg=\mc{A}^2\n\mc{A}^{\core};$
    \item[\bf (d)] $\mc{A}^{\core}$ is EP;
    \item[\bf (e)] $\left(\mc{A}^{\core}\right)^{\core}=\mc{A}^2\n\mc{A}^\dg;$
    \item[\bf (f)] $\left(\mc{A}^{\core}\right)^2\n\mc{A}=\mc{A}^{\#};$
    \item[\bf(g)] $\mc{A}^{\core}\n\mc{A}=\mc{A}^{\#}\n\mc{A}$.
\end{enumerate}
\end{theorem}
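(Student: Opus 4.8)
The plan is to make part (a) the keystone and to read off (b)--(g) from it by short manipulations, relying on the two uniqueness results (Theorem \ref{ThmCoreUniq} for the core inverse and uniqueness of the Moore--Penrose inverse) rather than on any decomposition. To establish (a) I would verify that the candidate $\mc{X}=\mc{A}^{\#}\n\mc{A}\n\mc{A}^\dg$ satisfies the three defining equations $(C1)$, $(C2)$, $(3^T)$ of Definition \ref{CoreDefT}, so that Theorem \ref{ThmCoreUniq} forces $\mc{X}=\mc{A}^{\core}$. The two computational facts that drive everything are, first, $\mc{A}\n\mc{X}=\mc{A}\n\mc{A}^{\#}\n\mc{A}\n\mc{A}^\dg=\mc{A}\n\mc{A}^\dg={P}_{\mathcal{R}(\mathcal{A})}$, which is Hermitian and hence yields $(3^T)$ at once; and second, the range equality $\mathfrak{R}(\mc{A}^{\#})=\mathfrak{R}(\mc{A})$ (valid since $\mc{A}$ has index one), which through Lemma \ref{range-stan} gives $\mc{A}\n\mc{A}^\dg\n\mc{A}^{\#}=\mc{A}^{\#}$. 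With these, $(C1)$ reduces via $\mc{A}\n\mc{A}^\dg\n\mc{A}=\mc{A}$ and $(5^T)$ to $\mc{X}\n\mc{A}^2=\mc{A}$, while $(C2)$ collapses to $\mc{A}\n\mc{X}^2=\mc{A}\n\mc{A}^\dg\n\mc{A}^{\#}\n\mc{A}\n\mc{A}^\dg=\mc{X}$. The two remaining forms in (a) are cosmetic: ${P}_{\mathcal{R}(\mathcal{A})}=\mc{A}\n\mc{A}^\dg$ by definition, and $\mc{A}^{\#}\n\mc{A}=\mc{A}\n\mc{A}^{\#}$ by $(5^T)$.

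Once (a) is available, parts (b) and (g) are immediate. Left-multiplying the identity $\mc{A}\n\mc{A}^{\core}=\mc{A}\n\mc{A}^\dg$ (established above) by $\mc{A}^{n-1}$ gives (b), and $\mc{A}^{\core}\n\mc{A}=\mc{A}^{\#}\n\mc{A}\n\mc{A}^\dg\n\mc{A}=\mc{A}^{\#}\n\mc{A}$, using $\mc{A}\n\mc{A}^\dg\n\mc{A}=\mc{A}$, gives (g).

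For (c) I would put $\mc{Y}=\mc{A}^2\n\mc{A}^\dg$ and check the four Moore--Penrose axioms for $\mc{Y}$ relative to $\mc{A}^{\core}$, so that uniqueness of the Moore--Penrose inverse yields $(\mc{A}^{\core})^\dg=\mc{Y}$. The key reductions are $\mc{A}^{\core}\n\mc{Y}=\mc{A}^{\core}\n\mc{A}^2\n\mc{A}^\dg=\mc{A}\n\mc{A}^\dg$ by $(C1)$, and, after first recording $\mc{A}^2\n\mc{A}^\dg=\mc{A}^2\n\mc{A}^{\core}$, the symmetric identity $\mc{Y}\n\mc{A}^{\core}=\mc{A}^2\n(\mc{A}^{\core})^2=\mc{A}\n\mc{A}^{\core}=\mc{A}\n\mc{A}^\dg$ by $(C2)$. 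Since both products equal the Hermitian tensor $\mc{A}\n\mc{A}^\dg$, axioms $(3^T)$ and $(4^T)$ hold and, in addition, $\mc{A}^{\core}\n\mc{Y}=\mc{Y}\n\mc{A}^{\core}$; axioms $(1^T)$ and $(2^T)$ then reduce to $\mc{A}\n\mc{A}^\dg\n\mc{A}^{\core}=\mc{A}^{\core}$ (a consequence of $\mathfrak{R}(\mc{A}^{\core})\subseteq\mathfrak{R}(\mc{A})$ through Lemma \ref{range-stan}) and to $\mc{A}\n\mc{A}^\dg\n\mc{A}^2\n\mc{A}^\dg=\mc{A}^2\n\mc{A}^\dg$. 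Part (d) is then free: the two products above coincide, so $\mc{A}^{\core}\n(\mc{A}^{\core})^\dg=(\mc{A}^{\core})^\dg\n\mc{A}^{\core}$, i.e. $\mc{A}^{\core}$ is EP.

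The last two parts are brief. For (e) I would first record the general fact that an EP tensor $\mc{B}$ satisfies $\mc{B}^{\core}=\mc{B}^\dg$ (each of $(C1)$, $(C2)$, $(3^T)$ for $\mc{B}^\dg$ follows from $\mc{B}\n\mc{B}^\dg=\mc{B}^\dg\n\mc{B}$ together with the Moore--Penrose axioms), and apply it to $\mc{B}=\mc{A}^{\core}$, combining with (c) to get $(\mc{A}^{\core})^{\core}=(\mc{A}^{\core})^\dg=\mc{A}^2\n\mc{A}^\dg$. For (f), substituting (a) and using $\mc{A}\n\mc{A}^\dg\n\mc{A}^{\#}=\mc{A}^{\#}$ gives $(\mc{A}^{\core})^2=(\mc{A}^{\#})^2\n\mc{A}\n\mc{A}^\dg$, whence $(\mc{A}^{\core})^2\n\mc{A}=(\mc{A}^{\#})^2\n\mc{A}=\mc{A}^{\#}$ after $\mc{A}\n\mc{A}^\dg\n\mc{A}=\mc{A}$ and $(5^T)$. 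I expect the main obstacle to be the bookkeeping in (c): everything there is associativity and the defining equations except for the two projection identities $\mc{A}\n\mc{A}^\dg\n\mc{A}^{\#}=\mc{A}^{\#}$ and $\mc{A}\n\mc{A}^\dg\n\mc{A}^{\core}=\mc{A}^{\core}$, so the real work is to arrange each product so that the single range input $\mathfrak{R}(\mc{A}^{\#})=\mathfrak{R}(\mc{A})$, and the resulting $\mathfrak{R}(\mc{A}^{\core})\subseteq\mathfrak{R}(\mc{A})$, carry all four Moore--Penrose verifications.
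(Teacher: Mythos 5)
Your proposal is correct and takes essentially the same route as the paper: part (a) by verifying $(C1)$, $(C2)$, $(3^T)$ for the candidate $\mc{A}^{\#}\n\mc{A}\n\mc{A}^\dg$ and invoking the uniqueness of Theorem \ref{ThmCoreUniq}, part (c) by checking the four Moore--Penrose equations for $\mc{A}^2\n\mc{A}^\dg$, part (d) from the coincidence of the two products with the Hermitian tensor $\mc{A}\n\mc{A}^\dg$, and (b), (f), (g) by the same short manipulations. The only minor variations are stylistic: you obtain the projection identities $\mc{A}\n\mc{A}^\dg\n\mc{A}^{\#}=\mc{A}^{\#}$ and $\mc{A}\n\mc{A}^\dg\n\mc{A}^{\core}=\mc{A}^{\core}$ from range inclusions via Lemma \ref{range-stan} where the paper uses the commutation $\mc{A}\n\mc{A}^{\#}=\mc{A}^{\#}\n\mc{A}$, and you deduce (e) from the general fact that an EP tensor's core inverse is its Moore--Penrose inverse applied to $\mc{A}^{\core}$, where the paper verifies the defining equations directly---both are sound.
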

\begin{proof}
\noindent {\bf (a)}
Since $\mc{A}$ is a core tensor, the existence of $\mc{A}^{\#}$ is ensured and later $\mc{X}:=\mc{A}^{\#}\n\mc{A}\n\mc{A}^\dg$.
It is necessary to verify $\mc{X}$ satisfies $(C1)$,$(C2)$,$(3^T)$.
Indeed:
$$
\aligned
& \mc{X}\n\mc{A}^2=\mc{A}^{\#}\n\mc{A}\n\mc{A}^\dg\n\mc{A}^2=\mc{A}^{\#}\n\mc{A}^2=\mc{A},\\
& \mc{A}\n\mc{X}^2=\mc{A}\n\mc{A}^{\#}\n\mc{A}\n\mc{A}^\dg\n\mc{A}^{\#}\n\mc{A}\n\mc{A}^\dg=\mc{A}\n\mc{A}^\dg\n\mc{A}\n\mc{A}^{\#}\n\mc{A}^\dg=\mc{A}^{\#}\n\mc{A}\n\mc{A}^\dg,\\
&(\mc{A}\n\mc{X})^*=(\mc{A}\n\mc{A}^\dg)^*=\mc{A}\n\mc{A}^\dg=\mc{A}\n\mc{A}^{\#}\n\mc{A}\n\mc{A}^\dg=\mc{A}\n\mc{X}.
\endaligned
$$

\noindent {\bf (b)}
The proof follows from the representations given in {\bf (a)} and simple transformations:
$$
\aligned
\mc{A}^n\n \mc{A}^{\core}=\mc{A}^{n-1}\n \mc{A}\n \mc{A}^{\#}\n\mc{A}\n\mc{A}^\dg=\mc{A}^n\n \mc{A}^{\dg}.
\endaligned
$$

\noindent {\bf (c)}
Let $\mc{Z}:=\mc{A}^2\n\mc{A}^\dg$. It is easy to show
$\mc{A}^{\core}\n\mc{Z}=\mc{Z}\n\mc{A}^{\core}=\mc{A}\n\mc{A}^\dg.$
Since
$$
\aligned
&\mc{A}^{\core}\n\mc{Z}\n\mc{A}^{\core}=\mc{A}\n\mc{A}^\dg\n\mc{A}^{\#}\n\mc{A}\n\mc{A}^\dg =\mc{A}^{\#}\n\mc{A}\n\mc{A}^\dg=\mc{A}^{\core},\\
&\mc{Z}\n\mc{A}^{\core}\n\mc{Z}=\mc{A}\n\mc{A}\n\mc{A}^\dg\n\mc{A}\n\mc{A}^\dg=\mc{Z},\\
&(\mc{A}^{\core}\n\mc{Z})^*=(\mc{Z}\n\mc{A}^{\core})^*=(\mc{A}\n\mc{A}^\dg)^*=(\mc{A}\n\mc{A}^\dg),
\endaligned
$$
the tensor $\mc{Z}$ is the Moore-Penrose inverse of $\mc{A}^{\core}.$

\noindent {\bf (d)} This part can be demonstrated after easy verification of the equalities
\[
A^{\core}\n(A^{\core})^\dg=(A^{\core})^\dg\n A^{\core}=\mc{A}\n\mc{A}^\dg.
\]

\noindent {\bf (e)} It is enough to show $A^{\core}\n(\mc{A}^2\n\mc{A}^\dg)^2=\mc{A}^2\n\mc{A}^\dg$ and $(\mc{A}^2\n\mc{A}^\dg)\n{\left(\mc{A}^{\core}\right)}^2=\mc{A}^{\core}$.
Indeed:
$$
\aligned
& \mc{A}^{\core}\n(\mc{A}^2\n\mc{A}^\dg)^2=\mc{A}\n\mc{A}^\dg\n\mc{A}^2\n\mc{A}^\dg=\mc{A}^2\n\mc{A}^\dg,\\
&\mc{A}^2\n\mc{A}^\dg\n \left(\mc{A}^{\core}\right)^2=\mc{A}\n\mc{A}^\dg\n\mc{A}^{\core}=\mc{A}\n\mc{A}^\dg\n\mc{A}^{\#}\n\mc{A}\n\mc{A}^\dg=\mc{A}\n\mc{A}^{\#}\n\mc{A}^\dg=\mc{A}^{\core}.
\endaligned
$$

\noindent {\bf (f)} This statement follows from $\left(\mc{A}^{\core}\right)^2\n\mc{A}=\mc{A}^{\#}\n\mc{A}\n\mc{A}^\dg\n\mc{A}^{\#}\n\mc{A}\n\mc{A}^\dg\n\mc{A}=\mc{A}^{\#}.$

\noindent {\bf (g)} The last one follows from {\bf (a)} and the definition of the Moore-Penrose inverse.
\end{proof}

\begin{remark}
It is easy to verify that $\mc{A}^{\core}=\mc{A}^{\#}\n\mc{A}\n\mc{A}^\dg=\mc{A}^{\#}\n{P}_{\mathcal{R}(\mathcal{A})}=\mc{A}\n\mc{A}^{\#}\n\mc{A}^\dg$ satisfies \eqref{CoreDefT}.
\end{remark}
\begin{corollary}
If $\mc{A}$ is a core tensor with full rank factorization $\mc{A}=\mc{P}\mc{Q}$, then
$$
\mc{A}^{\core}=\mc{A}^{(2)}_{\R(\mc{P}),\N(\mc{P^*})}.
$$
\end{corollary}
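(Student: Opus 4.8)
The plan is to recognize $\mc{A}^{(2)}_{\mathfrak{R}(\mc{P}),\,\mc{N}(\mc{P}^*)}$ as the unique outer inverse of $\mc{A}$ whose range is $\mathfrak{R}(\mc{P})$ and whose null space is $\mc{N}(\mc{P}^*)$, and then to show that $\mc{A}^{\core}$ is exactly such an outer inverse. That $\mc{A}^{\core}$ satisfies $(2^T)$ is already granted by Corollary \ref{prep3.1}, so the real work is to compute $\mathfrak{R}(\mc{A}^{\core})$ and $\mc{N}(\mc{A}^{\core})$ and to match them against the subspaces read off from the full rank factorization $\mc{A}=\mc{P}\n\mc{Q}$.

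First I would convert the factorization into subspace identities. From $\mc{A}=\mc{P}\n\mc{Q}$, Lemma \ref{range-stan} gives $\mathfrak{R}(\mc{A})\subseteq\mathfrak{R}(\mc{P})$ at once; for the reverse inclusion I would use that $\mc{Q}$ has full row rank, so $\mc{Q}\n\mc{Q}^\dg=\mc{I}$ and hence $\mc{P}=\mc{A}\n\mc{Q}^\dg$, which gives $\mathfrak{R}(\mc{P})\subseteq\mathfrak{R}(\mc{A})$ and therefore $\mathfrak{R}(\mc{A})=\mathfrak{R}(\mc{P})$. Dually, $\mc{A}^*=\mc{Q}^*\n\mc{P}^*$ with $\mc{Q}^*$ injective (it has full column rank), so $\mc{N}(\mc{A}^*)=\mc{N}(\mc{P}^*)$.

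Next I would determine the range and null space of $\mc{A}^{\core}$ itself from the defining equations and Theorem \ref{thm3.1}(a). Rewriting $(C1)$ and $(C2)$ as $\mc{A}=\mc{A}^{\core}\n\mc{A}^2$ and $\mc{A}^{\core}=\mc{A}\n(\mc{A}^{\core})^2$ yields $\mathfrak{R}(\mc{A})\subseteq\mathfrak{R}(\mc{A}^{\core})$ and $\mathfrak{R}(\mc{A}^{\core})\subseteq\mathfrak{R}(\mc{A})$ respectively, whence $\mathfrak{R}(\mc{A}^{\core})=\mathfrak{R}(\mc{A})=\mathfrak{R}(\mc{P})$. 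For the null space I would first record the identity $\mc{A}\n\mc{A}^{\core}=\mc{A}\n\mc{A}^\dg$, which follows from the representation $\mc{A}^{\core}=\mc{A}^{\#}\n\mc{A}\n\mc{A}^\dg$ in Theorem \ref{thm3.1}(a) together with $\mc{A}\n\mc{A}^{\#}\n\mc{A}=\mc{A}$. Using property $(2^T)$ one then gets $\mc{N}(\mc{A}^{\core})=\mc{N}(\mc{A}\n\mc{A}^{\core})=\mc{N}(\mc{A}\n\mc{A}^\dg)$, and since $\mc{A}\n\mc{A}^\dg=P_{\mathcal{R}(\mc{A})}$ is Hermitian idempotent, its null space is $\mathfrak{R}(\mc{A})^{\perp}=\mc{N}(\mc{A}^*)=\mc{N}(\mc{P}^*)$.

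Combining the two computations, $\mc{A}^{\core}$ is an outer inverse of $\mc{A}$ with range $\mathfrak{R}(\mc{P})$ and null space $\mc{N}(\mc{P}^*)$, hence it coincides with $\mc{A}^{(2)}_{\mathfrak{R}(\mc{P}),\,\mc{N}(\mc{P}^*)}$ by the uniqueness of the outer inverse with prescribed range and null space. I expect the null-space step to be the main obstacle: the passage $\mc{N}(\mc{A}^{\core})=\mc{N}(\mc{A}\n\mc{A}^{\core})$ genuinely needs the outer-inverse property $(2^T)$ (one inclusion is trivial, the other requires multiplying by $\mc{A}^{\core}$ and cancelling), and the identification $\mc{N}(\mc{A}\n\mc{A}^\dg)=\mc{N}(\mc{A}^*)$ relies on the Hermitian idempotency of the projector $\mc{A}\n\mc{A}^\dg$; although both are routine for matrices, I would state them explicitly in the Einstein-product setting (for instance through the reshaping isomorphism of Definition \ref{Defrsh}) to keep the argument self-contained.
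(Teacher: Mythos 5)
Your proof is correct, but it follows a genuinely different route from the paper's. The paper proves the corollary by direct computation: it substitutes the full-rank-factorization formulas $\mc{A}^{\#}=\mc{P}\n(\mc{Q}\n\mc{P})^{-2}\n\mc{Q}$ and $\mc{A}^{\dg}=\mc{Q}^*\n(\mc{Q}\n\mc{Q}^*)^{-1}\n(\mc{P}^*\n\mc{P})^{-1}\n\mc{P}^*$ into the representation $\mc{A}^{\core}=\mc{A}\n\mc{A}^{\#}\n\mc{A}^{\dg}$ of Theorem \ref{thm3.1}(a), simplifies the product to the Urquhart-type expression $\mc{P}\n(\mc{P}^*\n\mc{A}\n\mc{P})^{-1}\n\mc{P}^*$, and recognizes that expression as $\mc{A}^{(2)}_{\R(\mc{P}),\N(\mc{P^*})}$. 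You instead characterize both sides as the unique outer inverse with range $\mathfrak{R}(\mc{P})$ and null space $\mc{N}(\mc{P}^*)$: Corollary \ref{prep3.1} supplies $(2^T)$; the identities $(C1)$ and $(C2)$ together with $\mc{P}=\mc{A}\n\mc{Q}^{\dg}$ and Lemma \ref{range-stan} give $\mathfrak{R}(\mc{A}^{\core})=\mathfrak{R}(\mc{A})=\mathfrak{R}(\mc{P})$; and the projector identity $\mc{A}\n\mc{A}^{\core}=\mc{A}\n\mc{A}^{\dg}$ combined with $(2^T)$ gives $\mc{N}(\mc{A}^{\core})=\mc{N}(\mc{A}^*)=\mc{N}(\mc{P}^*)$. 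The two null-space steps you flagged as delicate are indeed sound: $(2^T)$ yields $\mc{N}(\mc{A}^{\core})=\mc{N}(\mc{A}\n\mc{A}^{\core})$, and the Moore--Penrose relations yield $\mc{N}(\mc{A}\n\mc{A}^{\dg})=\mc{N}(\mc{A}^*)$. As for what each approach buys: the paper's computation is shorter and produces the explicit formula $\mc{P}\n(\mc{P}^*\n\mc{A}\n\mc{P})^{-1}\n\mc{P}^*$ as a useful by-product, but it silently imports the tensor versions of the full-rank-factorization representations of $\mc{A}^{\#}$ and $\mc{A}^{\dg}$ (together with invertibility of $\mc{Q}\n\mc{P}$, which requires the index-one hypothesis); your argument stays within facts already established in the paper (Theorem \ref{thm3.1}(a), Corollary \ref{prep3.1}, Lemma \ref{range-stan}), at the price of invoking the existence and uniqueness of the outer inverse $\mc{A}^{(2)}_{T,S}$ with prescribed range and null space, which in the tensor setting is exactly the content one draws from \cite{stan} (cf.\ Corollary \ref{CorCorebc}), plus the transfer of the orthogonality facts through the reshaping isomorphism, which you correctly noted must be made explicit.
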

\begin{proof}
Using full rank factorizations of the group inverse and the Moore-Penrose inverse, it follows that
$$
\aligned
\mc{A}^{\core}&=\mc{A}\n\mc{A}^{\#}\n\mc{A}^\dg=\mc{P}\n\mc{Q}\n\mc{P}\n(\mc{Q}\n\mc{P})^{-2}\n \mc{Q}\n \mc{Q^*}\n(\mc{Q}\n\mc{Q^*})^{-1}\n(\mc{P^*}\n\mc{P})^{-1}\n\mc{P^*}\\
&= \mc{P}\n(\mc{P^*}\n\mc{A}\n \mc{P})^{-1}\n \mc{P^*},
\endaligned
$$
which clearly coincides with $\mc{A}^{(2)}_{\R(\mc{P}),\N(\mc{P^*})}$.
\end{proof}

The following representation of $\mc{A}^{\core}$ follows from Theorem \ref{thm3.1}(c).
\begin{corollary}
The core inverse can be obtained by $\mc{A}^{\core}=\left(\mc{A}^2\n\mc{A}^\dg\right)^\dg.$
\end{corollary}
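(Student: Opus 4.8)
The plan is to invert the identity already established in Theorem \ref{thm3.1}(c). That part asserts $\left(\mc{A}^{\core}\right)^\dg=\mc{A}^2\n\mc{A}^\dg$; equivalently, the tensor $\mc{A}^2\n\mc{A}^\dg$ is precisely the Moore-Penrose inverse of the core inverse $\mc{A}^{\core}$. I would take the Moore-Penrose inverse of both sides of this equality to obtain
\[
\left(\left(\mc{A}^{\core}\right)^\dg\right)^\dg=\left(\mc{A}^2\n\mc{A}^\dg\right)^\dg,
\]
and then simplify the left-hand side.

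The single auxiliary fact I would invoke is that the Moore-Penrose inverse under the Einstein product is involutory, that is, $\left(\mc{B}^\dg\right)^\dg=\mc{B}$ for every tensor $\mc{B}$. This is a consequence of the symmetry of the defining equations $(1^T)$--$(4^T)$ under the interchange of the roles of $\mc{A}$ and $\mc{X}$ together with conjugate transposition: if $\mc{X}=\mc{B}^\dg$ satisfies $(1^T)$--$(4^T)$ relative to $\mc{B}$, then $\mc{B}$ satisfies the same four equations relative to $\mc{X}$, so the uniqueness of the Moore-Penrose inverse forces $\mc{B}=\mc{X}^\dg=\left(\mc{B}^\dg\right)^\dg$. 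Taking $\mc{B}=\mc{A}^{\core}$ collapses the left-hand side above to $\mc{A}^{\core}$, yielding the claimed representation
\[
\mc{A}^{\core}=\left(\mc{A}^2\n\mc{A}^\dg\right)^\dg.
\]

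I do not expect any genuine obstacle here: the entire content of the corollary is already encoded in Theorem \ref{thm3.1}(c), and the argument reduces to a one-line application of the involutivity of $\dagger$. The only step worth recording explicitly is that involutivity remains valid in the tensor setting under the Einstein product, which follows at once from the symmetry of the Penrose equations and the uniqueness of $\mc{A}^\dg$.
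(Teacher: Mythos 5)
Your proof is correct and is exactly the route the paper takes: the paper states this corollary as an immediate consequence of Theorem \ref{thm3.1}(c), and your argument simply makes explicit the step of applying $\dagger$ to both sides of $\left(\mc{A}^{\core}\right)^\dg=\mc{A}^2\n\mc{A}^\dg$ together with the involutivity $\left(\mc{B}^\dg\right)^\dg=\mc{B}$, which indeed follows from the symmetry of the Penrose equations $(1^T)$--$(4^T)$ and uniqueness of the Moore--Penrose inverse under the Einstein product.
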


We can also generalize Theorem \ref{thm3.1}(a) by using $\{1,3\}$ inverses of $\mc{A}$ instead of the Moore-Penrose inverse.
The result is proved in Theorem \ref{thm3.61}.

\begin{theorem}\label{thm3.61}
 Let $\mc{A}\in \mathbb{C}^{I(N) \times I(N)}$ be a core tensor.
 If there exists a tensor $\mc{X}:=\mc{A}^{(1^T,3^T)}\in \mathbb{C}^{I(N) \times I(N)}$ such that $\mc{X}\in\mc{A}\{1^T,3^T\},$ then $\mc{A}^{\core}=\mc{A}^{\#}\n\mc{A}\n\mc{X}.$
\end{theorem}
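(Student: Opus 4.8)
The plan is to reduce the claim to Theorem \ref{thm3.1}(a), which already gives $\mc{A}^{\core}=\mc{A}^{\#}\n\mc{A}\n\mc{A}^\dg$. The only difference between that identity and the one we must prove is that the Moore--Penrose inverse $\mc{A}^\dg$ is replaced by an arbitrary $\{1^T,3^T\}$-inverse $\mc{X}$. Since $\mc{A}^\dg$ enters the formula only through the product $\mc{A}\n\mc{A}^\dg={P}_{\mathcal{R}(\mathcal{A})}$, it suffices to prove the single identity
\[
\mc{A}\n\mc{X}=\mc{A}\n\mc{A}^\dg={P}_{\mathcal{R}(\mathcal{A})},
\]
after which the conclusion follows immediately by associativity: $\mc{A}^{\core}=\mc{A}^{\#}\n(\mc{A}\n\mc{A}^\dg)=\mc{A}^{\#}\n(\mc{A}\n\mc{X})=\mc{A}^{\#}\n\mc{A}\n\mc{X}$.

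To establish that identity I would first show that $\mc{A}\n\mc{X}$ is a Hermitian idempotent. It is Hermitian directly from $(3^T)$, and it is idempotent because $(\mc{A}\n\mc{X})\n(\mc{A}\n\mc{X})=(\mc{A}\n\mc{X}\n\mc{A})\n\mc{X}=\mc{A}\n\mc{X}$, using the $(1^T)$ relation $\mc{A}\n\mc{X}\n\mc{A}=\mc{A}$. The same $(1^T)$ relation also pins down its range: the inclusion $\mathfrak{R}(\mc{A}\n\mc{X})\subseteq\mathfrak{R}(\mc{A})$ is automatic, while $\mc{A}\n\mc{X}\n\mc{A}=\mc{A}$ forces $\mathfrak{R}(\mc{A})\subseteq\mathfrak{R}(\mc{A}\n\mc{X})$, so that $\mathfrak{R}(\mc{A}\n\mc{X})=\mathfrak{R}(\mc{A})=\mathfrak{R}(\mc{A}\n\mc{A}^\dg)$.

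The crux of the argument—and the step I expect to be the main obstacle in the tensor setting—is to argue that two Hermitian idempotents with the same range must coincide, carried out purely algebraically. Writing $\mc{P}:=\mc{A}\n\mc{X}$ and $\mc{Q}:=\mc{A}\n\mc{A}^\dg$, the inclusion $\mathfrak{R}(\mc{Q})\subseteq\mathfrak{R}(\mc{P})$ together with Lemma \ref{range-stan} yields $\mc{U}$ with $\mc{Q}=\mc{P}\n\mc{U}$, whence idempotency gives $\mc{P}\n\mc{Q}=\mc{P}\n\mc{P}\n\mc{U}=\mc{P}\n\mc{U}=\mc{Q}$; symmetrically $\mc{Q}\n\mc{P}=\mc{P}$. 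Taking conjugate transposes in $\mc{P}\n\mc{Q}=\mc{Q}$ and using that both tensors are Hermitian converts it into $\mc{Q}\n\mc{P}=\mc{Q}$, and comparing with $\mc{Q}\n\mc{P}=\mc{P}$ gives $\mc{P}=\mc{Q}$, i.e. $\mc{A}\n\mc{X}=\mc{A}\n\mc{A}^\dg$. Care is needed here only to keep the order of factors correct under $\ast$ and to invoke Lemma \ref{range-stan} in both directions; no appeal to $\mc{A}^\dg$ beyond $(3^T)$ and $(1^T)$ is required.

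As a self-contained alternative that avoids the projection-uniqueness lemma, one can instead verify directly that $\mc{Y}:=\mc{A}^{\#}\n\mc{A}\n\mc{X}$ satisfies the three defining conditions of Definition \ref{CoreDefT} and then invoke the uniqueness result of Theorem \ref{ThmCoreUniq}. Condition $(3^T)$ reduces quickly since $\mc{A}\n\mc{Y}=\mc{A}\n\mc{A}^{\#}\n\mc{A}\n\mc{X}=\mc{A}\n\mc{X}$ is Hermitian; condition $(C1)$ follows from $\mc{Y}\n\mc{A}^2=\mc{A}^{\#}\n(\mc{A}\n\mc{X}\n\mc{A})\n\mc{A}=\mc{A}^{\#}\n\mc{A}^2=\mc{A}$; and condition $(C2)$ follows once one records the auxiliary identity $\mc{A}\n\mc{X}\n\mc{A}^{\#}=\mc{A}^{\#}$ (provable from $\mc{A}\n\mc{X}\n\mc{A}=\mc{A}$ and $\mc{A}^{\#}=\mc{A}\n(\mc{A}^{\#})^2$), giving $\mc{A}\n\mc{Y}^2=\mc{A}\n\mc{X}\n\mc{A}^{\#}\n\mc{A}\n\mc{X}=\mc{A}^{\#}\n\mc{A}\n\mc{X}=\mc{Y}$. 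Either route is short; I would present the first because it exposes that the whole content of the theorem is the range-based identification $\mc{A}\n\mc{X}={P}_{\mathcal{R}(\mathcal{A})}$.
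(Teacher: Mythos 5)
Your proposal is correct, and it in fact contains two complete proofs. The route you say you would present is genuinely different from the paper's: the paper never compares $\mc{A}\n\mc{X}$ with $\mc{A}\n\mc{A}^\dg$ at all; instead it verifies directly, in the same equational style used throughout Section \ref{SecCore}, that $\mc{Y}:=\mc{A}^{\#}\n\mc{A}\n\mc{X}$ satisfies $(C1)$, $(C2)$, $(3^T)$, and then invokes uniqueness (Theorem \ref{ThmCoreUniq}). That is, almost verbatim, your ``self-contained alternative,'' including the key cancellation you package as $\mc{A}\n\mc{X}\n\mc{A}^{\#}=\mc{A}^{\#}$ (the paper performs the equivalent step $\mc{A}\n\mc{X}\n\mc{A}^{\#}\n\mc{A}=\mc{A}^{\#}\n\mc{A}$ inline). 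Your preferred route instead reduces everything, via Theorem \ref{thm3.1}(a), to the single identity $\mc{A}\n\mc{X}=\mc{A}\n\mc{A}^\dg$, proved by showing both are Hermitian idempotents with range $\mathfrak{R}(\mc{A})$ and that two such projectors coincide; the argument through Lemma \ref{range-stan} ($\mc{P}\n\mc{Q}=\mc{Q}$, $\mc{Q}\n\mc{P}=\mc{P}$, then conjugate and use Hermitian-ness) is sound, granted the reversal law $(\mc{A}\n\mc{B})^*=\mc{B}^*\n\mc{A}^*$ for the Einstein product, which the paper itself uses freely (e.g., in the proof of Theorem \ref{ThmCoreUniq}). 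What your route buys is conceptual clarity: it isolates the fact that every $\{1^T,3^T\}$-inverse of $\mc{A}$ yields the same product $\mc{A}\n\mc{X}={P}_{\mathfrak{R}(\mc{A})}$, which explains at a glance why $\mc{A}^\dg$ may be replaced by any such inverse in the representations of Theorem \ref{thm3.1}(a). What the paper's route buys is brevity and self-containedness: it needs no range machinery and stays entirely within the ``specific algebraic approach'' the paper advertises as its methodological point.
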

\begin{proof}
  Let us assume that there exists a tensor $\mc{X}:=\mc{A}^{(1^T,3^T)}$ satisfying $\mc{A}\n\mc{X}\n\mc{A}$ and $(\mc{A}\n\mc{X})^*=\mc{A}\n\mc{X}.$
Consider $\mc{Y}:=\mc{A}^{\#}\n\mc{A}\n\mc{A}^{(1^T,3^T)}.$
We can notice  the following expressions
$$
\aligned
& \mc{A}\n\mc{Y}^2=\mc{A}\n\mc{A}^{\#}\n\mc{A}\n\mc{A}^{(1^T,3^T)}\n \mc{A}^{\#}\n\mc{A}\n\mc{A}^{(1^T,3^T)}\\
&\qquad \quad \ = \mc{A}\n\mc{A}^{(1^T,3^T)}\n \mc{A}^{\#}\n\mc{A}\n\mc{A}^{(1^T,3^T)}=\mc{A}^{\#}\n\mc{A}\n\mc{A}^{(1^T,3^T)}=
\mc{Y},\\ &\mc{Y}\n\mc{A}^2=\mc{A}^{\#}\n\mc{A}\n\mc{A}^{(1^T,3^T)}\n\mc{A}^2=\mc{A}^{\#}\n\mc{A}^2=\mc{A},\\
& (\mc{A}\n\mc{Y})^*=(\mc{A}\n\mc{A}^{(1^T,3^T)})^*=\mc{A}\n\mc{A}^{(1^T,3^T)}=\mc{A}\n\mc{A}^{\#}\n\mc{A}\n\mc{A}^{(1^T,3^T)}=\mc{A}\n\mc{Y}.
\endaligned
$$
Therefore, $\mc{Y}$ satisfies $(C1)$, $(C2)$ and $(3^T)$ with respect to $\mc{A}.$
By the uniqueness of the core inverse, it follows that $\mc{A}^{\core}=\mc{A}^{\#}\n\mc{A}\n\mc{X}.$
\end{proof}

\begin{corollary}\label{CoreCoreEqu}
Let $\mc{A}\in \mathbb{C}^{I(N)\times I(N)}$ be a core tensor and $\mc{X}\in \mathbb{C}^{I(N)\times I(N)}.$
If $\mc{X}$ satisfies one of the following two systems {\bf (i)} or {\bf (ii)}
\begin{enumerate}
    \item[\bf (i)] $(C1)$\ \ $\mc{X}\n\mc{A}^2=\mc{A},$ and $(3^T)$\ \ $(\mc{A}\n\mc{X})^*=\mc{A}\n\mc{X};$
    \item[\bf (ii)]  $\mc{X}\n\mc{A}=\mc{A}^{\#}\n\mc{A},$ and \ $(3^T)$\ \ $(\mc{A}\n\mc{X})^*=\mc{A}\n\mc{X};$
\end{enumerate}
then $\mc{A}^{\core}=\mc{A}^{\#}\n\mc{A}\n\mc{X}.$
\end{corollary}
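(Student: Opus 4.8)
The plan is to reduce both hypotheses to the situation already handled in Theorem \ref{thm3.61}. That theorem shows that whenever $\mc{X}\in\mc{A}\{1^T,3^T\}$ one has $\mc{A}^{\core}=\mc{A}^{\#}\n\mc{A}\n\mc{X}$, which is exactly the desired conclusion. Since the Hermitian condition $(3^T)$, $(\mc{A}\n\mc{X})^*=\mc{A}\n\mc{X}$, is assumed outright in both systems (i) and (ii), the entire argument collapses to verifying the single remaining identity $\mc{A}\n\mc{X}\n\mc{A}=\mc{A}$, i.e. that $\mc{X}$ satisfies $(1^T)$. No separate verification of $(C2)$ is needed once this reduction is in place.

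First I would dispatch system (ii). Starting from $\mc{X}\n\mc{A}=\mc{A}^{\#}\n\mc{A}$ and multiplying on the right by $\mc{A}$ yields
\[
\mc{X}\n\mc{A}^2=\mc{A}^{\#}\n\mc{A}^2=\mc{A},
\]
where the last equality uses the group-inverse relations $\mc{A}\n\mc{A}^{\#}=\mc{A}^{\#}\n\mc{A}$ and $\mc{A}\n\mc{A}^{\#}\n\mc{A}=\mc{A}$, so that $\mc{A}^{\#}\n\mc{A}^2=\mc{A}\n\mc{A}^{\#}\n\mc{A}=\mc{A}$. Hence $(C1)$ holds, and (ii) is subsumed by (i); it therefore suffices to treat (i).

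The crux is system (i), and here the key observation is that $\mc{A}$ is a core tensor, so $\ind{\mc{A}}=1$ and consequently $\mathfrak{R}(\mc{A}^2)=\mathfrak{R}(\mc{A})$. By Lemma \ref{range-stan} there exists a tensor $\mc{U}$ with $\mc{A}=\mc{A}^2\n\mc{U}$. Combining this factorization with the hypothesis $\mc{X}\n\mc{A}^2=\mc{A}$ gives
\[
\mc{A}\n\mc{X}\n\mc{A}=\mc{A}\n\mc{X}\n\mc{A}^2\n\mc{U}=\mc{A}\n\mc{A}\n\mc{U}=\mc{A}^2\n\mc{U}=\mc{A},
\]
so $\mc{X}$ satisfies $(1^T)$. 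Together with the assumed $(3^T)$ this shows $\mc{X}\in\mc{A}\{1^T,3^T\}$, and Theorem \ref{thm3.61} completes the proof.

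The main obstacle, and the only place where genuine structure is exploited, is precisely the passage from $\mc{X}\n\mc{A}^2=\mc{A}$ to $\mc{A}\n\mc{X}\n\mc{A}=\mc{A}$: it is not a purely formal manipulation but relies essentially on the index-one hypothesis, entering through the range identity $\mathfrak{R}(\mc{A}^2)=\mathfrak{R}(\mc{A})$ and the factorization supplied by Lemma \ref{range-stan}. Everything else is routine bookkeeping with the defining identities of $\mc{A}^{\#}$, and invoking Theorem \ref{thm3.61} absorbs the more delicate verification of conditions $(C2)$.
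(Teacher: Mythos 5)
Your proof is correct, and while it ends at the same place as the paper --- establishing $\mc{X}\in\mc{A}\{1^T,3^T\}$ and invoking Theorem \ref{thm3.61} --- the key verification is done by different means, and your reduction between the two systems runs in the opposite direction. The paper stays entirely inside the group-inverse calculus: for (i) it computes $\mc{X}\n\mc{A}=\mc{X}\n\mc{A}\n\mc{A}^{\#}\n\mc{A}=\mc{X}\n\mc{A}^2\n\mc{A}^{\#}=\mc{A}\n\mc{A}^{\#}=\mc{A}^{\#}\n\mc{A}$, i.e.\ it shows $(C1)$ implies the first hypothesis of (ii), and then left-multiplies by $\mc{A}$ to obtain $(1^T)$; system (ii) is then dispatched directly via $\mc{A}\n\mc{X}\n\mc{A}=\mc{A}\n\mc{A}^{\#}\n\mc{A}=\mc{A}$. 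You instead right-multiply (ii) by $\mc{A}$ to recover $(C1)$, so that (ii) is subsumed by (i), and then handle (i) by a range argument: the index-one hypothesis gives $\mathfrak{R}(\mc{A})=\mathfrak{R}(\mc{A}^2)$, Lemma \ref{range-stan} supplies $\mc{U}$ with $\mc{A}=\mc{A}^2\n\mc{U}$, and substitution yields $\mc{A}\n\mc{X}\n\mc{A}=\mc{A}\n(\mc{X}\n\mc{A}^2)\n\mc{U}=\mc{A}^2\n\mc{U}=\mc{A}$. Your route makes explicit where the core hypothesis enters (through the range equality, in the same spirit as the paper's Theorem \ref{rangequiv}) and avoids manipulating $\mc{A}^{\#}$ in case (i) altogether; the paper's route is purely equational, requires no factorization lemma, and shows as a by-product that for a core tensor the first hypotheses of (i) and (ii) are in fact equivalent. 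Both arguments are sound.
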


\begin{proof}
Let the system {\bf (i)} be satisfied.
If $\mc{X}\n\mc{A}^2=\mc{A}$ is true, then
$$\mc{X}\n\mc{A}=\mc{X}\n\mc{A}\n\mc{A}^{\#}\n\mc{A}=\mc{X}\n\mc{A}^2\n\mc{A}^{\#}=\mc{A}\n\mc{A}^{\#}=\mc{A}^{\#}\n\mc{A}.$$
Therefore,
$\mc{A}\n\mc{X}\n\mc{A}=\mc{A}$ and $\mc{X}\in \mc{A}\{1,3\}$.

Further if the conditions {\bf (ii)} are satisfied $\mc{X}\n\mc{A}=\mc{A}\n\mc{A}^{\#},$ then
$$\mc{A}\n\mc{X}\n\mc{A}=\mc{A}\n\mc{A}^{\#}\n\mc{A}=\mc{A},$$
which again implies $\mc{X}\in \mc{A}\{1,3\}$.

Now, the proof in both cases follows from Theorem \ref{thm3.61}.
\end{proof}

Corollary \ref{CorCorebc} follows from \cite[Theorem 4.3]{stan} and the properties of the core tensor inverse.

\begin{corollary}\label{CorCorebc}
Let ${\mathcal A}\in \mathbb C^{I(N) \times I(N)}$ is a given core tensor.

\smallskip
\noindent {\bf (a)} The following statements are equivalent:
\begin{itemize}\parskip0pt
\item[{\rm (i)}] there exists an outer inverse ${\mathcal X}\in \mathbb C^{I(N)\times I(N)}:=\mc{A}^{\core}={\mathcal A}_{\R({\mc{A}}),\N(\mc{A^*})}^{(2)}$;
\item[{\rm (ii)}] there exist a tensor ${\mathcal U}\in \mathbb C^{I(N) \times I(N)}$ that fulfils the tensor equations
\begin{equation}\label{EquCor1}
{\mc{A}}\n \mathcal{U}\n \mc{A^*}\n \mc{A}^2=\mc{A}, \ \ \mc{A^*}\n \mc{A}^2\n \mathcal{U}\n \mc{A^*}=\mc{A^*};
\end{equation}
\item[{\rm (iii)}] there exist tensors $\mathcal{U},\mathcal{V}\in \Bbb C^{I(N) \times I(N)}$ such that
\[
\mc{A}\n \mathcal{U}\n \mc{A^*}\n \mc{A}^2=\mc{A},\ \ \mc{A^*}\n \mc{A}^2\n \mathcal{V}\n \mc{A^*}=\mc{A^*};
\]
\item[{\rm (iv)}] there exist $\mathcal{U}\in \Bbb C^{I(N) \times I(N)}$ and $\mathcal{V}\in \Bbb C^{I(N) \times I(N)}$ such that
   \[
   \mc{A}\n \mathcal{U}\n \mc{A}^2=\mc{A},\ \ \mc{A^*}\n \mc{A}\n \mathcal{V}\n \mc{A^*}=\mc{A^*},\ \   \mc{A}\n \mathcal{U}=\mathcal{V}\n \mc{A^*};
   \]
\item[{\rm (v)}] there exist $\mathcal{U}\in \Bbb C^{I(N) \times I(N)}$ and $\mathcal{V}\in \Bbb C^{I(N) \times I(N)}$ satisfying
 \[
 \mc{A^*}\n \mc{A}^2\n \mathcal{U}=\mc{A^*},\ \ \mathcal{V}\n \mc{A^*}\n \mc{A}^2=\mc{A};
 \]
\item[{\rm (vi)}] $\N(\mc{A^*}\n \mc{A}^2)=\N(\mc{A})$;
\item[{\rm (vii)}] $\R(\mc{A^*}\n \mc{A}^2)=\R(\mc{A^*})$;
\item[{\rm (viii)}] $\mc{A}\n (\mc{A^*}\n \mc{A}^2)^{(1)}\n \mc{A^*}\n \mc{A}^2=\mc{A}$ and
     $\mc{A^*}\n \mc{A}^2\n (\mc{A^*}\n \mc{A}^2)^{(1)}\n \mc{A^*}=\mc{A^*}$;
\item[{\rm (viii)}] $\rra{\mc{A^*}\n \mc{A}^2}=\rra{\mc{A}}$;
\item[{\rm (ix)}] $A$ is core invertible;
\end{itemize}

\noindent {\bf (b)} If the statements in {\bf (a)} are true, then
\begin{equation}\label{eq:2inv.pRN}
\aligned
\mc{A}^{\core}&=\mc{A}\n (\mc{A^*}\n \mc{A}^2)^{(1)}\n \mc{A^*}\\
&=\mc{A}\n \mathcal{U}\n \mc{A^*},\ \
\endaligned
\end{equation}
for arbitrary 
$\mathcal{U}\in \Bbb C^{K(k)\times L(l)}$ satisfying \eqref{EquCor1}.
\end{corollary}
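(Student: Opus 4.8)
The plan is to recognize the tensor core inverse as the outer inverse of $\mc{A}$ with prescribed range $\mathfrak{R}(\mc{A})$ and null space $\N(\mc{A}^*)$, and then to specialize the general existence-and-representation theorem \cite[Theorem 4.3]{stan} for outer inverses of tensors to this particular pair of subspaces. Concretely, I expect \cite[Theorem 4.3]{stan} to assert, for generating tensors $\mc{G},\mc{H}$, that $\mc{A}^{(2)}_{\mathfrak{R}(\mc{G}),\N(\mc{H})}$ exists under a rank hypothesis and admits the representation $\mc{G}\n(\mc{H}\n\mc{A}\n\mc{G})^{(1)}\n\mc{H}$, and the whole corollary is the instance $\mc{G}=\mc{A}$, $\mc{H}=\mc{A}^*$.

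First I would establish that $\mc{A}^{\core}=\mc{A}^{(2)}_{\mathfrak{R}(\mc{A}),\N(\mc{A}^*)}$. By Corollary \ref{prep3.1} the core inverse is a $\{1^T,2^T,3^T\}$-inverse, hence in particular an outer inverse, so it suffices to read off its range and null space. The identity $\mc{A}=\mc{A}^{\core}\n\mc{A}^2$ (condition $(C1)$) together with $\mc{A}^{\core}=\mc{A}\n\mc{A}^{\#}\n\mc{A}^\dg$ from Theorem \ref{thm3.1}(a) yields $\mathfrak{R}(\mc{A}^{\core})=\mathfrak{R}(\mc{A})$, while $(3^T)$ combined with $\mc{A}\n\mc{A}^{\core}=\mc{A}\n\mc{A}^\dg=P_{\mathcal{R}(\mc{A})}$ gives $\N(\mc{A}^{\core})=\N(\mc{A}\n\mc{A}^\dg)=\N(\mc{A}^*)$.

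Next I would apply \cite[Theorem 4.3]{stan} with $\mc{G}:=\mc{A}$ and $\mc{H}:=\mc{A}^*$, for which $\mathfrak{R}(\mc{G})=\mathfrak{R}(\mc{A})$ and $\N(\mc{H})=\N(\mc{A}^*)$, so the prescribed range and null space coincide with those of $\mc{A}^{\core}$. The ``inner'' tensor of that theorem is then $\mc{H}\n\mc{A}\n\mc{G}=\mc{A}^*\n\mc{A}^2$, which is exactly the tensor recurring throughout the statement, and the general representation $\mc{G}\n(\mc{H}\n\mc{A}\n\mc{G})^{(1)}\n\mc{H}$ becomes $\mc{A}\n(\mc{A}^*\n\mc{A}^2)^{(1)}\n\mc{A}^*$, which is (b). Each equivalence (i)--(ix) is then the verbatim translation of the corresponding item of Theorem 4.3 under these substitutions: (ii)--(v) describe solvability of the associated tensor equations in $\mc{U}$ (and $\mc{V}$), and the first item (viii) is (ii) specialized to the explicit choice $\mc{U}=(\mc{A}^*\n\mc{A}^2)^{(1)}$.

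It remains to verify the purely range/rank items. Using that $\mc{A}$ has reshaping index one, so $\mathfrak{R}(\mc{A}^2)=\mathfrak{R}(\mc{A})$ and $\N(\mc{A}^2)=\N(\mc{A})$, I would show $\N(\mc{A}^*\n\mc{A}^2)=\N(\mc{A}^2)=\N(\mc{A})$ (invoking $\mathfrak{R}(\mc{A})\cap\N(\mc{A}^*)=\{\mc{O}\}$ to strip off the $\mc{A}^*$) and $\mathfrak{R}(\mc{A}^*\n\mc{A}^2)=\mathfrak{R}(\mc{A}^*\n\mc{A})=\mathfrak{R}(\mc{A}^*)$, giving (vi) and (vii); the rank equality $\rra{\mc{A}^*\n\mc{A}^2}=\rra{\mc{A}}$ then follows through the reshaping isomorphism of Definition \ref{DerTRank}. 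The main obstacle is not any single computation but the bookkeeping: matching the two-sided generating tensors $\mc{G}=\mc{A}$, $\mc{H}=\mc{A}^*$ of the general theorem to the asymmetric pair $(\mathfrak{R}(\mc{A}),\N(\mc{A}^*))$, and confirming that the index-one hypothesis is precisely what supplies the direct-sum (full-rank) condition under which \cite[Theorem 4.3]{stan} yields a genuine outer inverse equal to $\mc{A}^{\core}$.
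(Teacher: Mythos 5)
Your proposal is correct and is essentially the paper's own argument: the paper's entire justification is the single remark that the corollary ``follows from \cite[Theorem 4.3]{stan} and the properties of the core tensor inverse,'' and your specialization $\mc{G}=\mc{A}$, $\mc{H}=\mc{A}^*$ (so that $\mc{H}\n\mc{A}\n\mc{G}=\mc{A}^*\n\mc{A}^2$ and the representation $\mc{G}\n(\mc{H}\n\mc{A}\n\mc{G})^{(1)}\n\mc{H}$ becomes $\mc{A}\n(\mc{A}^*\n\mc{A}^2)^{(1)}\n\mc{A}^*$), together with the identification $\mc{A}^{\core}=\mc{A}^{(2)}_{\mathfrak{R}(\mc{A}),\N(\mc{A}^*)}$ via Corollary \ref{prep3.1}, Theorem \ref{thm3.1}(a), $(C1)$ and $(3^T)$, is exactly that argument made explicit. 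Your write-up in fact records more detail (the range/null-space verification and the rank bookkeeping under the index-one hypothesis) than the paper itself does.
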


The next result gives a correlation between the core inverse of a tensor power and the power of its core inverse.
\begin{theorem}\label{thm3.2}
Let $\mc{A}\in \mathbb{C}^{I(N) \times I(N)}$ be a core tensor.
Then the following holds for an arbitrary integer $m\geq 1$:
\begin{enumerate}
    \item[\bf (a)]  $\mc{A}^m\n\left(\mc{A}^{\core}\right)^m=\mc{A}\n\mc{A}^\dg,$
    \item[\bf (b)] $\left(\mc{A}^{\core}\right)^m =\left(\mc{A}^m\right)^{\core}.$
\end{enumerate}
\end{theorem}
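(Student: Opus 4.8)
The plan is to prove both statements by induction on $m$, relying heavily on the representation $\mc{A}^{\core}=\mc{A}^{\#}\n\mc{A}\n\mc{A}^\dg$ from Theorem \ref{thm3.1}(a), together with the $\{1^T,2^T\}$-inverse identities (\ref{eq3.1})--(\ref{eq3.2}) and the commuting relations of the group inverse. The crucial algebraic facts I would use repeatedly are: the group inverse commutes with $\mc{A}$, i.e.\ $\mc{A}\n\mc{A}^{\#}=\mc{A}^{\#}\n\mc{A}$; the idempotent $\mc{A}\n\mc{A}^\dg=\mc{A}\n\mc{A}^{\core}$ acts as a left identity on powers of $\mc{A}$ (since $\mc{A}\n\mc{A}^\dg\n\mc{A}=\mc{A}$ and hence $\mc{A}\n\mc{A}^\dg\n\mc{A}^m=\mc{A}^m$); and the telescoping behaviour of the core inverse through $\mc{A}^{\core}\n\mc{A}=\mc{A}^{\#}\n\mc{A}$ from Theorem \ref{thm3.1}(g).

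For part \textbf{(a)}, I would set up the induction with base case $m=1$, which is exactly $\mc{A}\n\mc{A}^{\core}=\mc{A}\n\mc{A}^\dg$, a direct consequence of $(3^T)$ and (\ref{eq3.1}) (indeed $\mc{A}\n\mc{A}^{\core}$ is the Hermitian idempotent $\mc{A}\n\mc{A}^\dg$). For the inductive step, I would write $\mc{A}^{m+1}\n(\mc{A}^{\core})^{m+1}=\mc{A}\n\mc{A}^m\n(\mc{A}^{\core})^m\n\mc{A}^{\core}$ and substitute the inductive hypothesis $\mc{A}^m\n(\mc{A}^{\core})^m=\mc{A}\n\mc{A}^\dg$; this reduces the expression to $\mc{A}\n(\mc{A}\n\mc{A}^\dg)\n\mc{A}^{\core}$. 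The key simplification is then $\mc{A}\n\mc{A}^\dg\n\mc{A}^{\core}=\mc{A}^{\core}$, which holds because $\mc{A}\n\mc{A}^\dg$ is the orthogonal projector onto $\mathfrak{R}(\mc{A})$ and $\mathfrak{R}(\mc{A}^{\core})\subseteq\mathfrak{R}(\mc{A})$; concretely one verifies it from $\mc{A}^{\core}=\mc{A}^{\#}\n\mc{A}\n\mc{A}^\dg=\mc{A}\n\mc{A}^{\#}\n\mc{A}^\dg$ so that $\mc{A}\n\mc{A}^\dg\n\mc{A}^{\core}=\mc{A}\n\mc{A}^\dg\n\mc{A}\n\mc{A}^{\#}\n\mc{A}^\dg=\mc{A}\n\mc{A}^{\#}\n\mc{A}^\dg=\mc{A}^{\core}$. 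This collapses the step to $\mc{A}\n\mc{A}^{\core}=\mc{A}\n\mc{A}^\dg$, completing the induction.

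For part \textbf{(b)}, since the core inverse is unique by Theorem \ref{ThmCoreUniq}, it suffices to verify that $\mc{X}:=(\mc{A}^{\core})^m$ satisfies the three defining conditions $(C1)$, $(C2)$, $(3^T)$ with respect to $\mc{A}^m$. Condition $(3^T)$, namely $(\mc{A}^m\n\mc{X})^*=\mc{A}^m\n\mc{X}$, is immediate from part \textbf{(a)}, because $\mc{A}^m\n(\mc{A}^{\core})^m=\mc{A}\n\mc{A}^\dg$ is Hermitian. For $(C1)$, I would compute $\mc{X}\n(\mc{A}^m)^2=(\mc{A}^{\core})^m\n\mc{A}^{2m}$ and use the relation $\mc{A}^{\core}\n\mc{A}=\mc{A}^{\#}\n\mc{A}$ to telescope repeatedly, reducing $(\mc{A}^{\core})^m\n\mc{A}^{2m}$ to $(\mc{A}^{\#})^m\n\mc{A}^{2m}=\mc{A}^m$ via the group-inverse power identities. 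Condition $(C2)$, $\mc{A}^m\n\mc{X}^2=\mc{X}$, follows analogously by inserting part \textbf{(a)} and using $\mc{A}\n\mc{A}^\dg\n(\mc{A}^{\core})^m=(\mc{A}^{\core})^m$ (the same projector-absorption identity as above, iterated). The main obstacle I anticipate is bookkeeping the index shifts in these telescoping products; the safest route is to first establish the auxiliary identity $(\mc{A}^{\core})^m=(\mc{A}^{\#})^m\n\mc{A}\n\mc{A}^\dg$ by induction (it is $\mc{A}^{\#}\n\mc{A}\n\mc{A}^\dg$ for $m=1$, and the step uses $\mc{A}^\dg\n\mc{A}^{\#}=\mc{A}^{\#}\n\mc{A}^\dg$-type commutation inside the projector), after which both $(C1)$ and $(C2)$ become routine substitutions.
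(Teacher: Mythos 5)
Your proposal is correct and follows essentially the same route as the paper: both rest on the representation $\mc{A}^{\core}=\mc{A}^{\#}\n\mc{A}\n\mc{A}^\dg$ from Theorem \ref{thm3.1}(a), prove part (a) by peeling off one factor at a time (your induction is just the paper's ``continuing in the same way'' made formal, with the same projector-absorption identity $\mc{A}\n\mc{A}^\dg\n\mc{A}^{\core}=\mc{A}^{\core}$), and prove part (b) by checking $(C1)$, $(C2)$, $(3^T)$ for $(\mc{A}^{\core})^m$ relative to $\mc{A}^m$ and invoking uniqueness from Theorem \ref{ThmCoreUniq}. The only cosmetic difference is your optional auxiliary identity $(\mc{A}^{\core})^m=(\mc{A}^{\#})^m\n\mc{A}\n\mc{A}^\dg$, which the paper handles by telescoping $(\mc{A}^{\#}\n\mc{A}\n\mc{A}^\dg)^m$ directly.
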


\begin{proof}
{\bf (a)} Using the statement of Theorem \ref{thm3.1}(a), we obtain
\begin{equation*}
\aligned
&\mc{A}^m\n\left(\mc{A}^{\core}\right)^m=\mc{A}^m\n\left(\mc{A}^{\#}\n\mc{A}\n\mc{A}^\dg\right)^m\\
&~~~~=\mc{A}^m\n\mc{A}^\dg\n\left(\mc{A}^{\#}\n\mc{A}\n\mc{A}^\dg\right)^{m-1}= \mc{A}^{m}\n\mc{A}^{\#}\n\mc{A}^\dg\n\left(\mc{A}^{\#}\n\mc{A}\n\mc{A}^\dg\right)^{m-2}\\
&~~~~= \mc{A}^{m-1}\n\mc{A}^\dg\n\left(\mc{A}^{\#}\n\mc{A}\n\mc{A}^\dg\right)^{m-2}= \mc{A}^{m-2}\n\mc{A}^\dg\n\left(\mc{A}^{\#}\n\mc{A}\n\mc{A}^\dg\right)^{m-3}.
\endaligned
\end{equation*}
Continuing in the same way, one can obtain
$$
\aligned
\mc{A}^m\n\left(\mc{A}^{\core}\right)^m= \mc{A}^2\n\mc{A}^\dg\n\left(\mc{A}^{\#}\n\mc{A}\n\mc{A}^\dg\right)=\mc{A}\n\mc{A}^\dg.
\endaligned
$$
{\bf (b)} Let $\mc{X}=\left(\mc{A}^{\core}\right)^m.$
Now, using the part {\bf (a)} of this theorem, we get
$$
\aligned
&\mc{X}\n(\mc{A}^m)^2=\left(\mc{A}^{\#}\n\mc{A}\n\mc{A}^\dg\right)^m\n\mc{A}^{2m}=\left(\mc{A}^{\#}\n\mc{A}\n\mc{A}^\dg\right)^{m-1}\n\mc{A}^{2m-1}=\cdots =\mc{A}^m,\\ &\mc{A}^m\n\mc{X}^2=\mc{A}\n\mc{A}^\dg\n\mc{X}=\mc{A}\n\mc{A}^\dg\n\left(\mc{A}^{\#}\n\mc{A}\n\mc{A}^\dg\right)^{m}=\left(\mc{A}^{\#}\n\mc{A}\n\mc{A}^\dg\right)^m=\mc{X},\\ &\left(\mc{A}^m\n\mc{X}\right)^*=\left(\mc{A}\n\mc{A}^\dg\right)^*=\mc{A}\n\mc{A}^\dg=\mc{A}^m\n\mc{X}.
\endaligned
$$
Therefore, $\mc{X}$ is the core inverse of $\mc{A}^m$, which completes the proof.
\end{proof}

In general, the Moore-Penrose inverse, the group inverse and the core inverse of tensors are different.
This fact is confirmed in Example \ref{Example1}.

\begin{example}\label{Example1}
Consider a tensor
$\mc{A}=(\mc{A}_{ijkl})
 \in \mathbb{R}^{(2\times3)\times (2\times3)}$ with entries
\begin{eqnarray*}
\mc{A}_{ij11} =
    \begin{bmatrix}
    1 & 6 & 2 \\
    -1 & 3 &  0
    \end{bmatrix},~
\mc{A}_{ij12} = \mc{A}_{ij13} =\mc{A}_{ij21} =  \mc{A}_{ij22} = a_{ij23} =
    \begin{bmatrix}
     0 & 0 & 0\\
     0 & 0 & 0
    \end{bmatrix},
\end{eqnarray*}
Then, $\mc{A}^\# = \mc{A} \in \mathbb{R}^{(2\times3)\times (2\times3)}$.
However, $ \mc{A}^\dg = (\mc{X}_{ijkl}) \in \mathbb{R}^{(2\times3)\times (2\times3)}$, where
\begin{eqnarray*}
\mc{X}_{ij11} =
    \begin{bmatrix}
    1/51 & 0 & 0 \\
    0 & 0 & 0
    \end{bmatrix},~
\mc{X}_{ij12} =
    \begin{bmatrix}
  2/17 & 0 & 0 \\
    0 & 0 & 0
    \end{bmatrix},~
\mc{X}_{ij13} =
    \begin{bmatrix}
  2/51 & 0 & 0 \\
    0 & 0 & 0
\end{bmatrix},\\
\mc{X}_{ij21} =
    \begin{bmatrix}
  -1/51 & 0 & 0 \\
    0 & 0 & 0
    \end{bmatrix},~
 \mc{X}_{ij22} =
    \begin{bmatrix}
  1/17 & 0 & 0 \\
    0 & 0 & 0
    \end{bmatrix},~
\mc{X}_{ij23} =
    \begin{bmatrix}
     0 & 0 & 0\\
     0 & 0 & 0
    \end{bmatrix}.
\end{eqnarray*}
and $ \mc{A}^{\core} = (\mc{Y}_{ijkl}) \in \mathbb{R}^{(2\times3)\times (2\times3)}$, where
\small{
\begin{eqnarray*}
&&\mc{Y}_{ij11} =
    \begin{bmatrix}
    1/51 & 2/17 & 2/51 \\
    -1/51 & 1/17 & 0
    \end{bmatrix},~
\mc{Y}_{ij12} =
    \begin{bmatrix}
  2/17 & 12/17 & 4/17 \\
    -2/17 & 6/17 & 0
    \end{bmatrix},~
\mc{Y}_{ij13} =
    \begin{bmatrix}
  2/51 & 4/17 & 4/51 \\
    -2/51 & 2/17 & 0
\end{bmatrix},\\
&&\mc{Y}_{ij21} =
    \begin{bmatrix}
  -1/51 & 0 & 0 \\
    0 & 0 & 0
    \end{bmatrix},~
\mc{Y}_{ij22} =
    \begin{bmatrix}
  1/17 & 6/17 & 2/17 \\
    -1/17 & 3/17 & 0
    \end{bmatrix},~
\mc{Y}_{ij23} =
    \begin{bmatrix}
     0 & 0 & 0\\
     0 & 0 & 0
    \end{bmatrix}.
\end{eqnarray*}
}
Hence it is clear that $\mc{A}^{\core}, \mc{A}^\#$ and $\mc{A}^\dg$ are different.
\end{example}

Now, a natural question is: are these inverses the same under some assumptions?
The next theorem will provide an affirmative  answer to this along with some characterizations for EP tensors.
The results of Theorem \ref{thm3.3} generalize the results of Theorem 2 from \cite{baks}.

\begin{theorem}\label{thm3.3}
 Let $\mc{A}\in \mathbb{C}^{I(N) \times I(N)}$ be a core tensor.  Then the following statements hold:
\begin{enumerate}
    \item[\bf (a)] $\mc{A}^{\core}=0$ if and only if $\mc{A}=0,$
    \item[\bf (b)] $\mc{A}^{\core}=\mc{A}\n\mc{A}^{\dg}$ if and only if $\mc{A}$ idempotent,
    \item[\bf (c)] $\mc{A}^{\core}=\mc{A}^{\dg}$ if and only if $\mc{A}$ is EP,
    \item[\bf (d)] $\mc{A}^{\core}=\mc{A}^{\#}$ if and only if $\mc{A}$ is EP,
     \item[\bf (e)] $\mc{A}^{\core}=\mc{A}$ if and only if $\mc{A}$ is EP and tripotent,
    \item[\bf (f)] $\mc{A}^{\core}=\mc{A}^{*}$ if and only if $\mc{A}$ is EP and partial isometry.
\end{enumerate}
\end{theorem}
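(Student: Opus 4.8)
The plan is to treat each equivalence by combining the defining equations $(C1)$, $(C2)$, $(3^T)$ from Definition \ref{CoreDefT} with the representation $\mc{A}^{\core}=\mc{A}^{\#}\n\mc{A}\n\mc{A}^\dg=\mc{A}\n\mc{A}^{\#}\n\mc{A}^\dg$ from Theorem \ref{thm3.1}(a) and the fact that $\mc{A}^{\core}$ is always EP (Theorem \ref{thm3.1}(d)). I will also use four facts that are standard and transfer from the matrix case through the reshaping of Definition \ref{Defrsh}, or are verified directly: (I) $\mc{A}$ is EP if and only if $\mc{A}^{\#}=\mc{A}^\dg$, which in one direction is immediate because EP lets $\mc{A}^\dg$ satisfy the three group-inverse equations; (II) $\mc{A}$ is EP if and only if $\mathfrak{R}(\mc{A})=\mathfrak{R}(\mc{A}^*)$ (equivalently $\N(\mc{A})=\N(\mc{A}^*)$), together with the rank identity $\rra{\mc{A}}=\rra{\mc{A}^*}$; (III) every tripotent tensor satisfies $\mc{A}^{\#}=\mc{A}$, seen by taking $\mc{X}=\mc{A}$ in the group-inverse equations; and (IV) $\mc{A}$ is a partial isometry if and only if $\mc{A}^\dg=\mc{A}^*$.

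For the elementary endpoints, part (a) is immediate: if $\mc{A}=\mc{O}$ then $(C1)$, $(C2)$ force $\mc{A}^{\core}=\mc{O}$, and conversely $(C1)$ gives $\mc{A}=\mc{A}^{\core}\n\mc{A}^2=\mc{O}$. For part (b), if $\mc{A}$ is idempotent then $\mc{A}^{\#}=\mc{A}$ and Theorem \ref{thm3.1}(a) collapses to $\mc{A}^{\core}=\mc{A}\n\mc{A}\n\mc{A}^\dg=\mc{A}\n\mc{A}^\dg$; conversely, substituting $\mc{A}^{\core}=\mc{A}\n\mc{A}^\dg$ into $(C1)$ yields $\mc{A}=\mc{A}\n\mc{A}^\dg\n\mc{A}^2=\mc{A}\n\mc{A}$, so $\mc{A}$ is idempotent.

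Parts (c)--(f) hinge on the EP characterization. For (c), EP gives $\mc{A}^{\#}=\mc{A}^\dg$ by (I), and Theorem \ref{thm3.1}(a) reduces to $\mc{A}^{\core}=\mc{A}^\dg\n\mc{A}\n\mc{A}^\dg=\mc{A}^\dg$; for the converse I substitute $\mc{A}^{\core}=\mc{A}^\dg$ into $(C2)$ to obtain $\mc{A}\n(\mc{A}^\dg)^2=\mc{A}^\dg$, which by Lemma \ref{range-stan} shows $\mathfrak{R}(\mc{A}^\dg)\subseteq\mathfrak{R}(\mc{A})$, hence $\mathfrak{R}(\mc{A}^*)\subseteq\mathfrak{R}(\mc{A})$, and equality of ranks then forces EP by (II). Part (d) follows from (c): EP gives $\mc{A}^{\core}=\mc{A}^\dg=\mc{A}^{\#}$, while conversely $\mc{A}^{\core}=\mc{A}^{\#}$ together with $(3^T)$ makes $\mc{A}\n\mc{A}^{\#}$ Hermitian, and a Hermitian idempotent with range $\mathfrak{R}(\mc{A})$ must coincide with $\mc{A}\n\mc{A}^\dg$, giving $\N(\mc{A})=\N(\mc{A}^*)$ and hence EP via (II). For (e), substituting $\mc{A}^{\core}=\mc{A}$ into $(C1)$ gives $\mc{A}^3=\mc{A}$ (tripotent), while Theorem \ref{thm3.1}(d) forces $\mc{A}$ to be EP; conversely tripotency gives $\mc{A}^{\#}=\mc{A}$ by (III) and EP gives $\mc{A}^{\core}=\mc{A}^{\#}$ by (d), so $\mc{A}^{\core}=\mc{A}$. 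Finally for (f), EP and the partial-isometry property give $\mc{A}^{\core}=\mc{A}^\dg=\mc{A}^*$ through (c) and (IV); conversely $\mc{A}^{\core}=\mc{A}^*$ is EP by Theorem \ref{thm3.1}(d), and since EP is preserved under conjugate transpose this makes $\mc{A}$ EP, whence $\mc{A}^{\core}=\mc{A}^\dg$ by (c) and therefore $\mc{A}^\dg=\mc{A}^*$, i.e.\ $\mc{A}$ is a partial isometry.

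I expect the main obstacle to be the converse directions, where an algebraic or projection identity must be upgraded to the definitional EP condition $\mc{A}\n\mc{A}^\dg=\mc{A}^\dg\n\mc{A}$. This is precisely where the equivalences (I), (II) and the rank identity $\rra{\mc{A}}=\rra{\mc{A}^*}$ do the real work; once the passage ``$\mathfrak{R}(\mc{A}^*)\subseteq\mathfrak{R}(\mc{A})$ (or $\mc{A}\n\mc{A}^{\#}$ Hermitian) $\Rightarrow$ EP'' is secured, either directly in the tensor setting through Lemma \ref{range-stan} or by reshaping to the matrix case, all remaining steps are routine substitutions into Theorem \ref{thm3.1} and Definition \ref{CoreDefT}.
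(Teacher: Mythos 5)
Your proposal is correct, and all six equivalences go through, but your converse directions take a genuinely different route from the paper's. The paper stays entirely inside its equational calculus: for (c) it notes that $\mc{A}^{\core}$ is EP by Theorem \ref{thm3.1}(d), so $\mc{A}^{\core}=\mc{A}^\dg$ and $(\mc{A}^\dg)^\dg=\mc{A}$ give $\mc{A}\n\mc{A}^\dg=\mc{A}^\dg\n\mc{A}$ in one line; for (d) it shows that $\mc{A}^{\#}$ satisfies the fourth Penrose equation by writing $(\mc{A}^{\#}\n\mc{A})^*=(\mc{A}\n\mc{A}^{\core})^*=\mc{A}\n\mc{A}^{\core}=\mc{A}^{\#}\n\mc{A}$, whence $\mc{A}^{\#}=\mc{A}^\dg$; and for (e), (f) it substitutes directly into the representation $\mc{A}^{\core}=\mc{A}^{\#}\n\mc{A}\n\mc{A}^\dg$. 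You instead upgrade to EP through geometric characterizations: Lemma \ref{range-stan}, the identification $\mathfrak{R}(\mc{A}^\dg)=\mathfrak{R}(\mc{A}^*)$, the rank identity $\rra{\mc{A}^*}=\rra{\mc{A}}$, uniqueness of Hermitian idempotents with a prescribed range, and your fact (II) that EP is equivalent to $\mathfrak{R}(\mc{A})=\mathfrak{R}(\mc{A}^*)$. This is conceptually transparent and mirrors the classical matrix picture, but it carries dependencies the paper never establishes for tensors: (II), the projector-uniqueness claim, and the dimension-counting step (containment of ranges plus equal $\mathrm{rshrank}$ implies equality) all have to be imported by reshaping, i.e., by invoking the isomorphism properties of $\mathrm{rsh}$ from \cite{stan}, or proved separately. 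The paper's proof needs none of this --- which is precisely its advertised point, that the verification is purely algebraic and ``new even in the matrix case'' --- so your version is complete only once those transferred lemmas are written out; conversely, your route makes visible the range/null-space geometry ($\N(\mc{A})=\N(\mc{A}^*)$) that the paper's manipulations keep implicit. One small simplification worth keeping from your write-up: in (b) and (e) you substitute into $(C1)$ directly ($\mc{A}\n\mc{A}^\dg\n\mc{A}^2=\mc{A}$ gives $\mc{A}^2=\mc{A}$; $\mc{A}\n\mc{A}^2=\mc{A}$ gives tripotency), which is shorter than the paper's corresponding computations.
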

\begin{proof}
{\bf (a)} If $\mc{A}=0,$ then $\mc{A}^{\core}=0$ follows trivially from Theorem \ref{thm3.1}(a).
To show the converse, let $\mc{A}^{\core}=0.$
This assumption leads to $\mc{A}^{\#}\n\mc{A}\n\mc{A}^{\dg}=0.$
Post-multiplying and pre-multiplying by $\mc{A}$, we get $\mc{A}=0.$
Thus, the statement in {\bf (a)} is completed.

\noindent {\bf (b)} Now let $\mc{A}^2=\mc{A}.$
Then $\mc{A}^{\core}=\mc{A}^{\#}\n\mc{A}\n\mc{A}^{\dg}=\mc{A}^{\#}\n\mc{A}^2\n\mc{A}^{\dg}=\mc{A}\n\mc{A}^{\dg}.$
The opposite statement is true since
$$\mc{A}=\mc{A}\n\mc{A}^{\#}\n\mc{A}=\mc{A}\n\mc{A}^{\#}\n\mc{A}\n\mc{A}^{\dg}\n\mc{A}=\mc{A}\n\mc{A}\n\mc{A}^{\dg}\n\mc{A}=\mc{A}^2.$$
\noindent {\bf (c)} Let us assume $\mc{A}^{\core}=\mc{A}^{\dg}.$
So, by Theorem \ref{thm3.1}(e), it follows that $\mc{A}^{\core}$ is EP.
This implies that $\mc{A}^{\core}\n\left(\mc{A}^{\core}\right)^{\dg}=\left(\mc{A}^{\core}\right)^{\dg}\n\mc{A}^{\core}.$
This yields $\mc{A}^{\dg}\n\mc{A}=\mc{A}\n\mc{A}^{\dg}$ since $\mc{A}^{\core}=\mc{A}^{\dg}.$
Therefore $\mc{A}$ is EP.

Now the converse follows by the following expression $$\mc{A}^{\core}=\mc{A}^{\#}\n\mc{A}\n\mc{A}^{\dg}=\mc{A}^{\#}\n\mc{A}\n\mc{A}^{\dg}\n\mc{A}\n\mc{A}^{\dg}=\mc{A}\n\mc{A}^{\#}\n\mc{A}^{\dg}\n\mc{A}\n\mc{A}^{\dg}=\mc{A}\n\mc{A}^{\dg}\n\mc{A}^{\dg}=\mc{A}^{\dg}.$$
\noindent {\bf (d)} Again, assume that $\mc{A}$ is EP.
Then the core inverse of $\mc{A}$ is equal to $$\mc{A}^{\core}=\mc{A}^{\#}\n\mc{A}\n\mc{A}^{\dg}=\mc{A}^{\#}\n\mc{A}\n\mc{A}^{\#}\n\mc{A}\n\mc{A}^{\dg}=\left(\mc{A}^{\#}\right)^2\n\mc{A}^2\n\mc{A}^{\dg}=\left(\mc{A}^{\#}\right)^2\n\mc{A}=\mc{A}^{\#}.$$
Consider $\mc{A}^{\core}=\mc{A}^{\#}$.
To claim that $\mc{A}$ is EP, it is enough to show $\mc{A}^{\#}=\mc{A}^{\dg}.$
Since $\mc{A}^{\core}$ is  $\{1^T,2^T,3^T\}$ inverse of $\mc{A}$, we only need to claim that $\left(\mc{A}^{\#}\n\mc{A}\right)^*=\mc{A}^{\#}\n\mc{A}$.
Since $\mc{A}^{\#}\n\mc{A}=\mc{A}\n\mc{A}^{\#},$
it implies
$\mc{A}\n\mc{A}^{\core}=\mc{A}^{\core}\n\mc{A}.$
Now
$$\left(\mc{A}^{\#}\n\mc{A}\right)^*=\left(\mc{A}^{\core}\n\mc{A}\right)^*=\left(\mc{A}\n\mc{A}^{\core}\right)^*=\mc{A}\n\mc{A}^{\core}=\mc{A}\n\mc{A}^{\#}=\mc{A}^{\#}\n\mc{A}.$$
This completes the proof for {\bf (d)}.

\noindent {\bf (e)} Next assume that $\mc{A}^{\core}=\mc{A}.$ Since $\mc{A}^{\core}$ is EP which yields $\mc{A}$ is EP.
From Definition \ref{CoreDefT}, it follows that $\mc{A}\n\left(\mc{A}^{\core}\right)^2=\mc{A}^{\core}.$
This turns to $\mc{A}^3=\mc{A}.$ So $\mc{A}$ is EP and tripotent.
The converse statement holds since
$$\mc{A}^{\core}=\mc{A}^{\#}\n\mc{A}\n\mc{A}^{\dg}=\mc{A}^{\#}\n\mc{A}^3\n\mc{A}^{\dg}=\mc{A}^{\#}\n\mc{A}^2\n\mc{A}^{\dg}\mc{A}=\mc{A}^{\#}\n\mc{A}^2=\mc{A}.$$
\noindent {\bf (f)} To show the last part, first consider $\mc{A}^{\core}=\mc{A}^*.$ Since $\mc{A}\n\mc{A}^{\core}\n\mc{A}=\mc{A},$ it gives $\mc{A}\n\mc{A}^{*}\n\mc{A}=\mc{A}.$
This gives $\mc{A}^{\core}=\mc{A}^*=\mc{A}^\dg.$
Then, by part {\bf (c)}, $\mc{A}$ is an isometry and EP.
The converse statement follows from part {\bf (c)} and the definition of isometry.
\end{proof}

Combining the results of Theorem \ref{thm3.3} (c) and (d), we state the following remark.
\begin{remark}\label{rem3.1}
Let $\mc{A}\in \mathbb{C}^{I(N) \times I(N)}$ be a core tensor.
Then  $\mc{A}$ is EP if and only if $\mc{A}^{\core}=\mc{A}^\dg=\mc{A}^{\#}.$
\end{remark}

Further characterizations of EP tensors are provided in the next theorem.
The results of Theorem \ref{thm3.4} generalize the results of Theorem 2 from \cite{baks}.

\begin{theorem}\label{thm3.4}
Let $\mc{A}\in \mathbb{C}^{I(N) \times I(N)}$ be a core tensor. Then the following are equivalent:
\begin{enumerate}
    \item[\rm (i)] $\mc{A}$ is EP,
    \item[\rm  (ii)] $\left(\mc{A}^{\core}\right)^{\core}=\mc{A},$
    \item[\rm  (iii)] $\mc{A}^{\core}\n\mc{A}=\mc{A}\n \mc{A}^{\core},$
    \item[\rm  (iv)] $\left(\mc{A}^{\dg}\right)^{\core}=\mc{A},$
    \item[\rm  (v)] $\left(\mc{A}^{\core}\right)^{\dg}=\mc{A}.$
\end{enumerate}
\end{theorem}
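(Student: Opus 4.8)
The plan is to treat (i) as the hub and prove that each of (ii)--(v) is equivalent to it, noting along the way that two of the statements actually coincide as algebraic requirements. The only ingredients needed are Theorem~\ref{thm3.1}(c),(e),(g), Theorem~\ref{thm3.3}(c),(d), the standard involution property $(\mc{B}^\dg)^\dg=\mc{B}$ of the Moore--Penrose inverse, and the fact that the group inverse commutes with $\mc{A}$ (condition $(5^T)$). Everything stays within the hypothesis that $\mc{A}$ is a core tensor.

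First I would dispose of (ii) and (v) together. By Theorem~\ref{thm3.1}(e), $\left(\mc{A}^{\core}\right)^{\core}=\mc{A}^2\n\mc{A}^\dg$, while Theorem~\ref{thm3.1}(c) gives $\left(\mc{A}^{\core}\right)^\dg=\mc{A}^2\n\mc{A}^\dg$; hence $\left(\mc{A}^{\core}\right)^{\core}=\left(\mc{A}^{\core}\right)^\dg$ holds unconditionally, consistent with $\mc{A}^{\core}$ being EP by Theorem~\ref{thm3.1}(d). Therefore (ii) and (v) are the \emph{same} requirement, namely $\left(\mc{A}^{\core}\right)^\dg=\mc{A}$. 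Applying the Moore--Penrose involution turns this into $\mc{A}^{\core}=\mc{A}^\dg$, and Theorem~\ref{thm3.3}(c) states exactly that $\mc{A}^{\core}=\mc{A}^\dg$ holds if and only if $\mc{A}$ is EP. This settles (i)$\Leftrightarrow$(ii)$\Leftrightarrow$(v).

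For (i)$\Leftrightarrow$(iv) I would use the involution once more. Since $(\mc{A}^\dg)^\dg=\mc{A}$, statement (iv) reads $(\mc{A}^\dg)^{\core}=(\mc{A}^\dg)^\dg$, and applying Theorem~\ref{thm3.3}(c) to the tensor $\mc{A}^\dg$ shows this is equivalent to $\mc{A}^\dg$ being EP. Finally, $\mc{A}^\dg$ is EP precisely when $\mc{A}$ is, because the defining relation $\mc{A}\n\mc{A}^\dg=\mc{A}^\dg\n\mc{A}$ is unchanged under $\mc{A}\mapsto\mc{A}^\dg$ once $(\mc{A}^\dg)^\dg=\mc{A}$ is substituted. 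The only point needing a word of care is that invoking Theorem~\ref{thm3.3}(c) for $\mc{A}^\dg$ presupposes that $\mc{A}^\dg$ is itself a core tensor; this is automatic once $\mc{A}$ is EP (then $\mc{A}^\dg=\mc{A}^{\#}$ has index one) and is implicit in the very formation of $(\mc{A}^\dg)^{\core}$ in (iv).

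The remaining equivalence (i)$\Leftrightarrow$(iii) is the one with genuine content, since a commutation relation must be upgraded to EP, and I expect it to be the crux. The forward direction is immediate: if $\mc{A}$ is EP then $\mc{A}^{\core}=\mc{A}^{\#}$ by Theorem~\ref{thm3.3}(d), and the group inverse commutes, so $\mc{A}^{\core}\n\mc{A}=\mc{A}^{\#}\n\mc{A}=\mc{A}\n\mc{A}^{\#}=\mc{A}\n\mc{A}^{\core}$. For the converse I would assume $\mc{A}^{\core}\n\mc{A}=\mc{A}\n\mc{A}^{\core}$; since $(\mc{A}\n\mc{A}^{\core})^*=\mc{A}\n\mc{A}^{\core}$ by $(3^T)$, the hypothesis forces $\mc{A}^{\core}\n\mc{A}$ to be Hermitian, and by Theorem~\ref{thm3.1}(g) we have $\mc{A}^{\core}\n\mc{A}=\mc{A}^{\#}\n\mc{A}$, so $(\mc{A}^{\#}\n\mc{A})^*=\mc{A}^{\#}\n\mc{A}$, i.e.\ $\mc{A}^{\#}$ satisfies $(4^T)$. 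Because $\mc{A}^{\#}$ already satisfies $(1^T)$, $(2^T)$ and, via $\mc{A}\n\mc{A}^{\#}=\mc{A}^{\#}\n\mc{A}$ being equally Hermitian, also $(3^T)$, it must be the Moore--Penrose inverse; thus $\mc{A}^{\#}=\mc{A}^\dg$, whence $\mc{A}\n\mc{A}^\dg=\mc{A}\n\mc{A}^{\#}=\mc{A}^{\#}\n\mc{A}=\mc{A}^\dg\n\mc{A}$ and $\mc{A}$ is EP. Reading off all four Moore--Penrose equations for $\mc{A}^{\#}$ from the single Hermiticity of $\mc{A}^{\#}\n\mc{A}$ is the step that carries the real weight of the argument.
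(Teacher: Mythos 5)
Your proof is correct, but it is organized along a genuinely different route than the paper's. The paper proves the cycle (i)$\Rightarrow$(ii)$\Rightarrow$(iii)$\Rightarrow$(i) and then treats (iv) and (v) by separate arguments: for (iv)$\Rightarrow$(i) it verifies directly that $\mc{A}^\dg$ satisfies the three defining equations $(C1)$, $(C2)$, $(3^T)$ of the core inverse of $\mc{A}$ (so that $\mc{A}^{\core}=\mc{A}^\dg$), and for (iii)$\Rightarrow$(i) it deduces $\mc{A}\n\mc{A}^{\#}=\mc{A}\n\mc{A}^\dg$, hence $\mc{A}^{\core}=\mc{A}^{\#}\n\mc{A}\n\mc{A}^{\#}=\mc{A}^{\#}$, and concludes via Theorem~\ref{thm3.3}(d). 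You instead use a hub-and-spoke design around (i), with three notable departures: first, you observe that Theorem~\ref{thm3.1}(c),(e) make (ii) and (v) \emph{literally the same condition} (both say $\mc{A}^2\n\mc{A}^\dg=\mc{A}$), a point the paper never exploits, and this collapses two of the paper's separate arguments into one application of the involution $(\mc{B}^\dg)^\dg=\mc{B}$ plus Theorem~\ref{thm3.3}(c); second, you handle (iv) by applying Theorem~\ref{thm3.3}(c) to the tensor $\mc{A}^\dg$ rather than to $\mc{A}$ itself, which costs you the extra (correctly flagged) check that $\mc{A}^\dg$ is a core tensor, a hypothesis the paper's direct verification never needs; third, for (iii)$\Rightarrow$(i) you show that $\mc{A}^{\#}$ satisfies all four Penrose equations and invoke uniqueness of the Moore--Penrose inverse to conclude $\mc{A}^{\#}=\mc{A}^\dg$, whereas the paper reduces to the already-established equivalence of Theorem~\ref{thm3.3}(d). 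Your organization is more economical and exposes the logical structure (essentially everything funnels through $\mc{A}^{\core}=\mc{A}^\dg$); the paper's is more self-contained step by step, relying only on direct verification of defining equations and never on MP uniqueness or on core-invertibility of $\mc{A}^\dg$.
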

\begin{proof}
Let $\mc{A}$ is EP. By Theorem \ref{thm3.1}(d), $\left(\mc{A}^{\core}\right)^{\core}=\mc{A}^2\n\mc{A}^\dg=\mc{A}\n\mc{A}^\dg\n\mc{A}=\mc{A}.$
Thus $(i)\Rightarrow ~(ii).$

Assume $(ii)$ is true, i.e., $\mc{A}^2\n\mc{A}^\dg=\mc{A}.$
Now
$$\mc{A}^{\core}\n\mc{A}=\mc{A}^{\#}\n\mc{A}=\mc{A}^{\#}\n\mc{A}^2\n\mc{A}^\dg=\mc{A}\n\mc{A}^{\#}\n\mc{A}\n\mc{A}^\dg=\mc{A}\n\mc{A}^{\core}.$$
This completes $(ii)\Rightarrow ~(iii).$

Next we will claim $(iii)\Rightarrow ~(i).$
Consider $\mc{A}^{\core}\n\mc{A}=\mc{A}\n \mc{A}^{\core}.$
This turnout to $\mc{A}\n\mc{A}^{\#}=\mc{A}\n\mc{A}^\dg.$ Now $\mc{A}^{\core}=\mc{A}^{\#}\n\mc{A}\n\mc{A}^\dg=\mc{A}^{\#}\n\mc{A}\n\mc{A}^{\#}=\mc{A}^{\#}.$
So by Theorem \ref{thm3.3}(d), the tensor $\mc{A}$ is EP.

\smallskip
Next we will claim the equivalence between $(i)$ and $(iv)$.
Let us assume $\mc{A}$ is EP.
Then $\mc{A}^{\core}=\mc{A}^\dg$ by  Theorem \ref{thm3.3}(c).
This yields $\left(\mc{A}^\dg\right)^{\core}=\left(\mc{A}^{\core}\right)^{\core}=\mc{A}.$
To show the converse it is enough to show that $\mc{A}^\dg$ is the core inverse of $\mc{A}.$
Let $\left(\mc{A}^\dg\right)^{\core}=\mc{A}.$  Which implies $\mc{A}^\dg\n\mc{A}^2=\mc{A}$ and $\mc{A}\n\left(\mc{A}^\dg\right)^2=\mc{A}^\dg.$
As $\left(\mc{A}\n\mc{A}^\dg\right)^*=\mc{A}\n\mc{A}^\dg,$ it follows that $\mc{A}^\dg$ satisfies all the conditions of the core inverse.
Therefore $(i)\Leftrightarrow(iv)$.

\smallskip
Finally, we will show $(i)\Leftrightarrow (v).$
The tensor $\mc{A}$ is EP if and only if $\mc{A}^{\core}=\mc{A}^\dg.$
This turnout $\mc{A}$ is EP if and only if $\left(\mc{A}^{\core}\right)^\dg=(\mc{A}^\dg)^\dg=\mc{A}.$
\end{proof}
By looking at Theorem \ref{thm3.4} $(i),~(iv)$ and $(v),$ we state the following remark.
\begin{remark}\label{rm3.2}
Let $\mc{A}\in \mathbb{C}^{I(N) \times I(N)}$ be a core tensor. Then $\mc{A}$ is EP if and only if $\left(\mc{A}^{\core}\right)^{\dg}=\left(\mc{A}^{\dg}\right)^{\core}.$
\end{remark}

We will discuss some equivalent characterization of core inverse in the remaining part of this section. The condition $\mc{A}\n\mc{X}^2=\mc{X}$ can be dropped from the definition of core inverse but with an additional condition as stated in the theorem given below.

\begin{theorem}\label{thm3.14}
Let $\mc{A}\in \mathbb{C}^{I(N) \times I(N)}$ be a core tensor.
If there exists a tensor $\mc{X}$ satisfying \\
$(2^T)$\ $\mc{X}\n\mc{A}\n\mc{X}=\mc{X}$,\ $(3^T)$\  $(\mc{A}\n\mc{X})^*=\mc{A}\n\mc{X}$ and $(C1)$\ $\mc{X}\n\mc{A}^2=\mc{A},$\\
then $\mc{X}$ is the core inverse of $\mc{A}.$
\end{theorem}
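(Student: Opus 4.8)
To prove that $\mc{X}$ is the core inverse of $\mc{A}$, I must verify that $\mc{X}$ satisfies the three defining conditions $(C1)$, $(C2)$, $(3^T)$ of Definition \ref{CoreDefT}. Two of these, namely $(C1)$ and $(3^T)$, are already given as hypotheses, so the entire work reduces to deriving the single missing condition $(C2)$: $\mc{A}\n\mc{X}^2=\mc{X}$. Once $(C2)$ is established, uniqueness of the core inverse (Theorem \ref{ThmCoreUniq}) identifies $\mc{X}$ with $\mc{A}^{\core}$.

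First I would extract as much structural information as possible from the hypotheses before attacking $(C2)$ directly. The key observation is that $(C1)$ together with $(3^T)$ already forces $\mc{X}$ to be a $\{1^T,3^T\}$-inverse of $\mc{A}$: indeed, exactly as in the proof of Corollary \ref{CoreCoreEqu}, the relation $\mc{X}\n\mc{A}^2=\mc{A}$ yields $\mc{X}\n\mc{A}=\mc{X}\n\mc{A}\n\mc{A}^{\#}\n\mc{A}=\mc{X}\n\mc{A}^2\n\mc{A}^{\#}=\mc{A}\n\mc{A}^{\#}=\mc{A}^{\#}\n\mc{A}$, and therefore $\mc{A}\n\mc{X}\n\mc{A}=\mc{A}\n\mc{A}^{\#}\n\mc{A}=\mc{A}$, so $\mc{X}\in\mc{A}\{1^T\}$; combined with $(3^T)$ this gives $\mc{X}\in\mc{A}\{1^T,3^T\}$. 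Here I am freely using that $\mc{A}$, being a core tensor, possesses a group inverse $\mc{A}^{\#}$.

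The heart of the argument is then to run the computation for $(C2)$ using $(2^T)$ to remove the extra factor. The natural chain is
\begin{equation*}
\mc{A}\n\mc{X}^2=\mc{A}\n\mc{X}\n\mc{X}=(\mc{A}\n\mc{X}\n\mc{A})\n\mc{X}\n\mc{X}=\mc{A}\n\mc{X}\n(\mc{A}\n\mc{X}\n\mc{X}),
\end{equation*}
where I have inserted the factorization $\mc{A}=\mc{A}\n\mc{X}\n\mc{A}$ established above. Invoking $(2^T)$, $\mc{X}\n\mc{A}\n\mc{X}=\mc{X}$, this collapses to $\mc{A}\n\mc{X}\n\mc{X}=\mc{A}\n\mc{X}^2$, which is circular, so a more careful bookkeeping is needed: I would instead compute $\mc{X}=\mc{X}\n\mc{A}\n\mc{X}$ by $(2^T)$ and substitute $\mc{X}\n\mc{A}=\mc{A}^{\#}\n\mc{A}$ to get $\mc{X}=\mc{A}^{\#}\n\mc{A}\n\mc{X}$, then show $\mc{A}\n\mc{X}^2=\mc{A}\n\mc{X}\n\mc{A}^{\#}\n\mc{A}\n\mc{X}$ and simplify the central $\mc{X}\n\mc{A}^{\#}\n\mc{A}$ block back to $\mc{X}$.

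**Main obstacle.** The delicate point is precisely that naive substitution into $(C2)$ tends to reproduce the expression being computed, so the derivation must route through the group inverse identity $\mc{X}\n\mc{A}=\mc{A}^{\#}\n\mc{A}=\mc{A}\n\mc{A}^{\#}$ rather than through $\mc{A}\n\mc{X}$ alone; the commutation $\mc{A}^{\#}\n\mc{A}=\mc{A}\n\mc{A}^{\#}$ is what breaks the circularity and lets the $(2^T)$ condition do real work. I expect the clean route to be: use $(2^T)$ to write $\mc{X}=\mc{X}\n\mc{A}\n\mc{X}$, replace the left $\mc{X}\n\mc{A}$ by $\mc{A}^{\#}\n\mc{A}$, commute to $\mc{A}\n\mc{A}^{\#}$, and then recognize $\mc{A}\n\mc{A}^{\#}\n\mc{X}$ as $\mc{A}\n\mc{X}^2$ after one further application of $\mc{X}\n\mc{A}=\mc{A}\n\mc{A}^{\#}$, yielding $(C2)$. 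With $(C1)$, $(C2)$, $(3^T)$ all in hand, Theorem \ref{ThmCoreUniq} completes the proof.
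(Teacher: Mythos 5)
Your proposal is correct, and it closes the theorem by a slightly different route than the paper. The two arguments share the same opening move: post-multiplying $(C1)$ by $\mc{A}^{\#}$ to obtain $\mc{X}\n\mc{A}=\mc{A}\n\mc{A}^{\#}=\mc{A}^{\#}\n\mc{A}$, hence $\mc{A}\n\mc{X}\n\mc{A}=\mc{A}$ and $\mc{X}\in\mc{A}\{1^T,3^T\}$. Where you diverge is the finish. The paper invokes Theorem \ref{thm3.61} to conclude $\mc{A}^{\core}=\mc{A}^{\#}\n\mc{A}\n\mc{X}=\mc{A}\n\mc{A}^{\#}\n\mc{X}$, then replaces $\mc{A}\n\mc{A}^{\#}$ by $\mc{X}\n\mc{A}$ and uses $(2^T)$ to get $\mc{A}^{\core}=\mc{X}\n\mc{A}\n\mc{X}=\mc{X}$. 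You instead verify the missing axiom $(C2)$ directly and conclude via Definition \ref{CoreDefT} and Theorem \ref{ThmCoreUniq}, never needing Theorem \ref{thm3.61}; this makes your proof more self-contained, at the cost of redoing algebra that \ref{thm3.61} has already packaged. Your final step is described only in words and deserves to be written out, but it does close: from $(2^T)$ and $\mc{X}\n\mc{A}=\mc{A}\n\mc{A}^{\#}$ one gets $\mc{X}=\mc{X}\n\mc{A}\n\mc{X}=\mc{A}\n\mc{A}^{\#}\n\mc{X}$, and then
\begin{equation*}
\mc{A}\n\mc{X}^2=\mc{A}\n\mc{X}\n\left(\mc{A}\n\mc{A}^{\#}\n\mc{X}\right)
=\left(\mc{A}\n\mc{X}\n\mc{A}\right)\n\mc{A}^{\#}\n\mc{X}
=\mc{A}\n\mc{A}^{\#}\n\mc{X}=\mc{X},
\end{equation*}
where the substitution is made in the \emph{second} factor of $\mc{X}^2$ (substituting in the first factor is circular, as you noted) and the $\{1^T\}$-property collapses the middle. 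Note also that the same substitution trick, $\mc{A}\n\mc{A}^{\#}\leftrightarrow\mc{X}\n\mc{A}$, is the engine of both proofs; the only real difference is whether the quantity $\mc{A}\n\mc{A}^{\#}\n\mc{X}$ is identified as $\mc{A}^{\core}$ (paper, via Theorem \ref{thm3.61}) or as $\mc{A}\n\mc{X}^2$ (you, to certify $(C2)$).
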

\begin{proof}
 Let $\mc{A}=\mc{X}\n\mc{A}^2.$ Post-multiplying by $\mc{A}^{\#},$ we obtain $\mc{A}\n\mc{A}^{\#}=\mc{X}\n\mc{A}^2\n\mc{A}^{\#}=\mc{X}\n\mc{A}.$
 Now $\mc{A}\n\mc{X}\n\mc{A}=\mc{A}\n\mc{A}\n\mc{A}^{\#}=\mc{A}.$
 Therefore, $\mc{X}\in\mc{A}\{1^T,3^T\}.$
 So, by Theorem \ref{thm3.61}, the core inverse of $\mc{A}$ is $\mc{A}^{\core}=\mc{A}^{\#}\n\mc{A}\n\mc{X}=\mc{A}\n\mc{A}^{\#}\n\mc{X}.$
 Replacing $\mc{A}\n\mc{A}^{\#}$ by $\mc{X}\n\mc{A},$ one can obtain $\mc{A}^{\core}=\mc{X}\n\mc{A}\n\mc{X}=\mc{X}.$
  Hence the proof is complete.
\end{proof}
Theorem \ref{thm3.14} leads to the following corollary.
\begin{corollary}\label{cor3.15}
Let $\mc{A}\in \mathbb{C}^{I(N) \times I(N)}$ be a core tensor.
If there exists a tensor $\mc{X}$ satisfying \\
$(2^T)$\  $\mc{X}\n\mc{A}\n\mc{X}=\mc{X},$\ $(3^T)$\  $(\mc{A}\n\mc{X})^*=\mc{A}\n\mc{X}$ and $(C1)$\ $\mc{X}\n\mc{A}^2=\mc{A},$ \\
then $\mc{A}\n\mc{X}^2=\mc{X}.$
\end{corollary}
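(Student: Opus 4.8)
The plan is to recognize that the three hypotheses imposed on $\mc{X}$ here, namely $(2^T)$, $(3^T)$ and $(C1)$, are \emph{verbatim} the hypotheses of Theorem \ref{thm3.14}. Consequently the desired conclusion $\mc{A}\n\mc{X}^2=\mc{X}$ (which is precisely condition $(C2)$) will follow at once from that theorem together with the defining property of the core inverse in Definition \ref{CoreDefT}.

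First I would note that since $\mc{A}\in\mathbb{C}^{I(N)\times I(N)}$ is a core tensor, the core inverse $\mc{A}^{\core}$ exists and, by Theorem \ref{ThmCoreUniq}, is unique. The tensor $\mc{X}$ is assumed to satisfy exactly the system $(2^T)$ $\mc{X}\n\mc{A}\n\mc{X}=\mc{X}$, $(3^T)$ $(\mc{A}\n\mc{X})^*=\mc{A}\n\mc{X}$, and $(C1)$ $\mc{X}\n\mc{A}^2=\mc{A}$. Applying Theorem \ref{thm3.14} directly therefore yields $\mc{X}=\mc{A}^{\core}$.

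Second, I would simply read off condition $(C2)$ from Definition \ref{CoreDefT}: the core inverse satisfies $\mc{A}\n(\mc{A}^{\core})^2=\mc{A}^{\core}$. Substituting the identification $\mc{X}=\mc{A}^{\core}$ gives $\mc{A}\n\mc{X}^2=\mc{X}$, which is exactly the assertion of the corollary.

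There is essentially no obstacle to overcome here, since the entire content is carried by Theorem \ref{thm3.14}; the corollary is really just an explicit statement that the apparently-dropped axiom $(C2)$ is recoverable from the remaining three. The one point that must be handled with care is the passage from ``$\mc{X}$ is \emph{a} core inverse of $\mc{A}$'' to ``$\mc{X}$ \emph{equals} $\mc{A}^{\core}$,'' which is licensed by the uniqueness result of Theorem \ref{ThmCoreUniq}; it is precisely this identification that lets $\mc{X}$ inherit property $(C2)$. (Alternatively, one could obtain the same conclusion without invoking uniqueness by retracing the computation in the proof of Theorem \ref{thm3.14}, where one first establishes $\mc{X}\n\mc{A}=\mc{A}\n\mc{A}^{\#}$ and $\mc{X}\in\mc{A}\{1^T,3^T\}$, and then derives $\mc{A}^{\core}=\mc{X}\n\mc{A}\n\mc{X}=\mc{X}$; but invoking Theorem \ref{thm3.14} is the shortest route.)
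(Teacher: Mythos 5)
Your proposal is correct and matches the paper's own treatment: the paper introduces Corollary \ref{cor3.15} with the phrase ``Theorem \ref{thm3.14} leads to the following corollary,'' i.e., it too identifies $\mc{X}$ with $\mc{A}^{\core}$ via Theorem \ref{thm3.14} (and uniqueness, Theorem \ref{ThmCoreUniq}) and then reads off $(C2)$ from Definition \ref{CoreDefT}. Your parenthetical alternative (redoing the computation without invoking uniqueness) is essentially what the paper records in the remark immediately following the corollary, so nothing is missing.
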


\begin{remark}
Corollary \ref{cor3.15} can be proved separately and the proof is the following.
Since $\mc{X}=\mc{X}\n\mc{A}\n\mc{X}=\mc{X}^2\n\mc{A}
\n\mc{X}=\mc{X}^2\n\mc{A}^2\n\mc{X}.$ Pre-multiplying by $\mc{X}^{\#},$ we obtain
\[
\mc{X}^{\#}\n\mc{X}=\mc{X}^{\#}\n\mc{X}^2\n\mc{A}^2\n\mc{X}=\mc{X}\n\mc{A}^2\n\mc{X}=\mc{A}\n\mc{X}.
\]
Now $\mc{A}\n\mc{X}^2=\mc{A}\n\mc{X}\n\mc{X}=\mc{X}^{\#}\n\mc{X}\n\mc{X}=\mc{X}.$
The assumptions of Corollary \ref{cor3.15}  can also be used as an equivalent definition of core inverse of $\mc{A}.$
\end{remark}

Next, we discuss another equivalent definition of the core inverse, which was proved in \cite{wang2015} for the matrix case by using the singular decomposition.

\begin{proposition}\label{prep3.16}
Let $\mc{A}\in \mathbb{C}^{I(N) \times I(N)}$ be a core tensor.
If there exists a tensor $\mc{X}$ satisfying\\
$(1^T)$\  $\mc{A}\n\mc{X}\n\mc{A}=\mc{A}$,\ $(3^T)$\  $(\mc{A}\n\mc{X})^*=\mc{A}\n\mc{X}$ and $(C2)$\ $\mc{A}\n\mc{X}^2=\mc{X},$ \\
then $\mc{X}$ is core inverse of $\mc{A}.$
\end{proposition}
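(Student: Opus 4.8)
The plan is to show that the tensor $\mc{X}$ satisfying $(1^T)$, $(3^T)$, and $(C2)$ must coincide with the unique core inverse $\mc{A}^{\core}$, whose existence and uniqueness are already guaranteed by Theorem \ref{ThmCoreUniq}. Since Definition \ref{CoreDefT} characterizes the core inverse by $(C1)$, $(C2)$, $(3^T)$, and $\mc{X}$ already carries $(C2)$ and $(3^T)$, the real work is to extract $(C1)$, namely $\mc{X}\n\mc{A}^2=\mc{A}$, from the three given hypotheses. Once $(C1)$ is established, Definition \ref{CoreDefT} applies directly and uniqueness finishes the argument.

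First I would exploit $(C2)$ by iterating it: from $\mc{A}\n\mc{X}^2=\mc{X}$ one gets $\mc{X}=\mc{A}\n\mc{X}^2$, so $\mc{A}\n\mc{X}=\mc{A}\n\mc{A}\n\mc{X}^2=\mc{A}^2\n\mc{X}^2$, and more generally $\mc{A}\n\mc{X}=\mc{A}^m\n\mc{X}^m$ for every $m\geq 1$; this shows $\mc{A}\n\mc{X}$ lies in the range of arbitrarily high powers of $\mc{A}$. The key structural consequence is that $\mathfrak{R}(\mc{X})\subseteq\mathfrak{R}(\mc{A})$: indeed $\mc{X}=\mc{A}\n\mc{X}^2=\mc{A}\n(\mc{X}^2)$ exhibits $\mc{X}$ as $\mc{A}\n\mc{U}$ with $\mc{U}=\mc{X}^2$, so by Lemma \ref{range-stan} the range inclusion holds. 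Next I would combine $(1^T)$ with $(3^T)$ to identify the idempotent $\mc{A}\n\mc{X}$ as the orthogonal projector $P_{\mathcal{R}(\mathcal{A})}=\mc{A}\n\mc{A}^\dagger$: since $\mc{A}\n\mc{X}$ is Hermitian by $(3^T)$ and satisfies $(\mc{A}\n\mc{X})\n\mc{A}=\mc{A}$ by $(1^T)$, it is the Hermitian idempotent whose range equals $\mathfrak{R}(\mc{A})$, hence $\mc{A}\n\mc{X}=\mc{A}\n\mc{A}^\dagger$.

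With $\mc{A}\n\mc{X}=\mc{A}\n\mc{A}^\dagger$ in hand, I would verify $(C1)$ as follows. Because $\mc{A}$ is a core tensor, $\mc{A}^{\#}$ exists and $\mc{A}\n\mc{A}^{\#}\n\mc{A}=\mc{A}$. The plan is to write
\begin{equation*}
\mc{X}\n\mc{A}^2=\mc{X}\n\mc{A}\n\mc{A}=(\mc{A}\n\mc{X})^*\n\mc{A}=\mc{X}^*\n\mc{A}^*\n\mc{A},
\end{equation*}
but a cleaner route uses the range inclusion $\mathfrak{R}(\mc{X})\subseteq\mathfrak{R}(\mc{A})$ together with $\mc{X}=\mc{X}\n\mc{A}\n\mc{X}$ (which follows from $(1^T)$ and $(C2)$, since $\mc{X}\n\mc{A}\n\mc{X}=\mc{X}\n\mc{A}\n\mc{A}\n\mc{X}^2=\mc{X}\n\mc{A}^2\n\mc{X}^2$ can be collapsed via $\mc{A}\n\mc{X}=\mc{A}\n\mc{A}^\dagger$). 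Concretely, since $\mathfrak{R}(\mc{X})\subseteq\mathfrak{R}(\mc{A})$ gives $\mc{A}\n\mc{A}^{\#}\n\mc{X}=\mc{X}$, and $\mc{A}\n\mc{X}=\mc{A}\n\mc{A}^\dagger$ gives $\mc{X}\n\mc{A}^2=\mc{X}\n\mc{A}\n\mc{A}$, I would reduce $\mc{X}\n\mc{A}$ to $\mc{A}^{\#}\n\mc{A}$ and conclude $\mc{X}\n\mc{A}^2=\mc{A}^{\#}\n\mc{A}^2=\mc{A}$.

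I expect the main obstacle to be bridging the asymmetry between the hypotheses and $(C1)$: the given conditions control $\mc{A}\n\mc{X}$ (through $(1^T)$ and $(3^T)$) and the range of $\mc{X}$ (through $(C2)$), but $(C1)$ is a statement about $\mc{X}\n\mc{A}^2$, involving $\mc{X}\n\mc{A}$ rather than $\mc{A}\n\mc{X}$. The delicate step is therefore transferring information from the ``$\mc{A}\n\mc{X}$'' side to the ``$\mc{X}\n\mc{A}$'' side, which is where the core-tensor hypothesis (existence of $\mc{A}^{\#}$ and the identity $\mathfrak{R}(\mc{A})=\mathfrak{R}(\mc{A}^2)$) does the essential work: it lets one cancel and re-expand powers of $\mc{A}$ legitimately, using $\mathfrak{R}(\mc{X})\subseteq\mathfrak{R}(\mc{A})$ to replace $\mc{X}$ by $\mc{A}\n\mc{A}^{\#}\n\mc{X}$ and then collapse the resulting product. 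Once $(C1)$ is secured, the proof closes immediately by invoking Definition \ref{CoreDefT} and Theorem \ref{ThmCoreUniq}.
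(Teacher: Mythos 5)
Your overall strategy --- extract $(C1)$ from the hypotheses and then invoke Definition \ref{CoreDefT} together with uniqueness (Theorem \ref{ThmCoreUniq}) --- is the right one, and the ingredients you assemble do suffice. But the decisive step is never carried out: you write that you ``would reduce $\mc{X}\n\mc{A}$ to $\mc{A}^{\#}\n\mc{A}$ and conclude,'' and that reduction \emph{is} the proposition; as written, the proposal asserts the conclusion rather than proving it. The gap is fillable in one line from facts you already established: from $\mathfrak{R}(\mc{X})\subseteq\mathfrak{R}(\mc{A})$ (via Lemma \ref{range-stan} and $\mc{X}=\mc{A}\n\mc{X}^2$) you have $\mc{A}\n\mc{A}^{\#}\n\mc{X}=\mc{X}$, and since $\mc{A}\n\mc{A}^{\#}=\mc{A}^{\#}\n\mc{A}$,
\[
\mc{X}\n\mc{A}=\left(\mc{A}^{\#}\n\mc{A}\n\mc{X}\right)\n\mc{A}=\mc{A}^{\#}\n\left(\mc{A}\n\mc{X}\n\mc{A}\right)=\mc{A}^{\#}\n\mc{A},
\]
whence $\mc{X}\n\mc{A}^2=\mc{A}^{\#}\n\mc{A}^2=\mc{A}$, i.e.\ $(C1)$. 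Note this uses only $(1^T)$, $(C2)$ and the existence of $\mc{A}^{\#}$. Your detour through the projector identification $\mc{A}\n\mc{X}=\mc{A}\n\mc{A}^\dagger$ is therefore unnecessary for $(C1)$, and it silently invokes the fact that a Hermitian idempotent is uniquely determined by its range --- true (e.g.\ via reshaping), but nowhere established for tensors in the paper, so it would need an argument. Also, the aborted chain $\mc{X}\n\mc{A}\n\mc{A}=(\mc{A}\n\mc{X})^*\n\mc{A}$ is false: $(3^T)$ gives $(\mc{A}\n\mc{X})^*=\mc{A}\n\mc{X}$, not $\mc{X}\n\mc{A}$, so equating $\mc{X}\n\mc{A}$ with $(\mc{A}\n\mc{X})^*$ amounts to assuming $\mc{A}$ and $\mc{X}$ commute; it is good that you abandoned that line.

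For comparison, the paper reaches the same key identity $\mc{X}\n\mc{A}=\mc{A}^{\#}\n\mc{A}$ more directly: substituting $(C2)$ into $(1^T)$ gives $\mc{A}=\mc{A}^2\n\mc{X}^2\n\mc{A}$, and pre-multiplying by $\mc{A}^{\#}$ yields $\mc{A}^{\#}\n\mc{A}=\mc{A}\n\mc{X}^2\n\mc{A}=\mc{X}\n\mc{A}$, after which $(C1)$ follows exactly as above. Once completed, your route is a mild conceptual variant of the same argument (range inclusion in place of direct substitution); in both versions neither $(3^T)$ nor $\mc{A}^\dagger$ plays any role in obtaining $(C1)$, as $(3^T)$ is only needed at the end so that $\mc{X}$ satisfies all three conditions of Definition \ref{CoreDefT}.
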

\begin{proof}
It is sufficient to show that $\mc{X}\n\mc{A}^2=\mc{A}.$ Combining the result $\mc{A}\n\mc{X}\n\mc{A}=\mc{A}$ and $\mc{A}\n\mc{X}^2=\mc{X}$, we get  $\mc{A}=\mc{A}^2\n\mc{X}^2\n\mc{A}.$
Pre-multiplying $\mc{A}^{\#}$ both sides, we obtain $\mc{A}^{\#}\n\mc{A}=\mc{A}^{\#}\mc{A}^2\n\mc{X}^2\n\mc{A}=\mc{A}\n\mc{X}^2\n\mc{A}=\mc{X}\n\mc{A}.$
Now $\mc{X}\n\mc{A}^2=\mc{X}\n\mc{A}\n\mc{A}=\mc{A}^{\#}\n\mc{A}\n\mc{A}=\mc{A}.$ Hence $\mc{X}$ is the core inverse of $\mc{A}.$
\end{proof}

By combining Theorem \ref{thm3.14} and Proposition \ref{prep3.16}, we state the following corollary which can be used as an equivalent characterization of the core inverse.
\begin{corollary}\label{eqvcore}
Let $\mc{A}\in \mathbb{C}^{I(N) \times I(N)}$ be a core tensor and $\mc{X}\in \mathbb{C}^{I(N) \times I(N)}$.
If $\mc{X}$ satisfies any one of the following triads of equations:
\begin{enumerate}
    \item[\bf (i)]\quad $(C1)$\ $\mc{X}\n\mc{A}^2=\mc{A},$\quad $(C2)$\  $\mc{A}\n\mc{X}^2=\mc{X},$  and $(3^T)$\ $(\mc{A}\n\mc{X})^*=\mc{A}\n\mc{X}$,
    \item[\bf (ii)]\quad $(2^T)$\ $\mc{X}\n\mc{A}\n\mc{X}=\mc{X},$\quad $(C1)$\  $\mc{X}\n\mc{A}^2=\mc{A},$ and $(3^T)$\ $(\mc{A}\n\mc{X})^*=\mc{A}\n\mc{X}$,
    \item[\bf (iii)]\quad $(1^T)$\ $\mc{A}\n\mc{X}\n\mc{A}=\mc{A},$\quad $(2^T)$\  $\mc{A}\n\mc{X}^2=\mc{X},$ and $(3^T)$\ $(\mc{A}\n\mc{X})^*=\mc{A}\n\mc{X}$,
\end{enumerate}
then $\mc{X}$ is the core inverse of $\mc{A}.$
\end{corollary}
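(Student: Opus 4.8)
The plan is to observe that each of the three triads listed in the statement has already been treated individually in the preceding results, so the proof reduces to invoking those results case by case. First I would handle triad \textbf{(i)}: the three equations $(C1)$, $(C2)$, $(3^T)$ are exactly the system appearing in Definition \ref{CoreDefT}, so a tensor $\mc{X}$ satisfying \textbf{(i)} is the core inverse of $\mc{A}$ by definition, and it is the unique such tensor by Theorem \ref{ThmCoreUniq}.

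Next, for triad \textbf{(ii)} I would note that its three equations, namely $(2^T)$ and $(C1)$ together with $(3^T)$, coincide precisely with the hypotheses of Theorem \ref{thm3.14}. That theorem already establishes that any $\mc{X}$ meeting these conditions equals $\mc{A}^{\core}$, so nothing further needs to be computed here.

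Finally, triad \textbf{(iii)} consists of $(1^T)$, the relation $\mc{A}\n\mc{X}^2=\mc{X}$, and $(3^T)$, which is exactly the hypothesis of Proposition \ref{prep3.16}. Invoking that proposition yields $\mc{X}=\mc{A}^{\core}$ in this case as well. Hence in all three cases $\mc{X}$ is the core inverse of $\mc{A}$, which completes the argument.

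Since every case is a direct citation of an already-proved statement, there is no genuine obstacle; the only point requiring care is bookkeeping of the equation labels. In particular, the equation displayed as $(2^T)$ inside triad \textbf{(iii)} is in fact the defining relation $(C2)$, that is $\mc{A}\n\mc{X}^2=\mc{X}$, and one must match it to Proposition \ref{prep3.16} rather than to the outer-inverse condition $\mc{X}\n\mc{A}\n\mc{X}=\mc{X}$.
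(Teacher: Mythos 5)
Your proposal is correct and follows essentially the same route as the paper, which states this corollary precisely "by combining Theorem \ref{thm3.14} and Proposition \ref{prep3.16}," with triad \textbf{(i)} being Definition \ref{CoreDefT} itself (together with uniqueness from Theorem \ref{ThmCoreUniq}). Your bookkeeping remark is also accurate: the label $(2^T)$ attached to $\mc{A}\n\mc{X}^2=\mc{X}$ in triad \textbf{(iii)} is a mislabeling of $(C2)$, and matching it to Proposition \ref{prep3.16} is the right reading.
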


If we replace the condition $\mc{A}\n\mc{X}^2=\mc{X}$ in Corollary \ref{eqvcore} {\bf (iii)} by $\mc{X}\n\mc{A}^2=\mc{A},$ then one can easily verify that $\mc{X}\n\mc{A}\n\mc{X}$ is the core inverse of $\mc{A}.$
In conclusion, we pose the following remark.
\begin{remark}
Let $\mc{A}\in \mathbb{C}^{I(N) \times I(N)}$ be a core tensor.
 If there exists a tensor $\mc{X}$ satisfying $\mc{A}\n\mc{X}\n\mc{A}=\mc{A},$ $(\mc{A}\n\mc{X})^*=\mc{A}\n\mc{X},$ and $\mc{X}\n\mc{A}^2=\mc{A},$
then $\mc{X}\n\mc{A}\n\mc{X}$ is the core inverse of $\mc{A}.$
\end{remark}

The core inverse can be further characterized using the range condition as in Theorem \ref{rangequiv}.
\begin{theorem}\label{rangequiv}
Let $\mc{A}\in \mathbb{C}^{I(N) \times I(N)}$ be a core tensor and  $\mc{X}\in \mathbb{C}^{I(N) \times I(N)}.$
If $\mc{X}$ satisfies  $(\mc{A}\n\mc{X})^*=\mc{A}\n\mc{X},$  $\mc{X}\n\mc{A}^2=\mc{A},$ and $\mathfrak{R}(\mc{X})\subseteq \mathfrak{R}(\mc{A}),$
then $\mc{X}$ is the core inverse of $\mc{A}$.
\end{theorem}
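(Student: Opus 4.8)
The plan is to reduce the statement to verifying the single missing defining equation. Since two of the three conditions in Definition \ref{CoreDefT}, namely $(3^T)$ and $(C1)$, already appear among the hypotheses, it suffices to produce $(C2)$, that is $\mc{A}\n\mc{X}^2=\mc{X}$; once all three hold, the uniqueness established in Theorem \ref{ThmCoreUniq} forces $\mc{X}=\mc{A}^{\core}$.

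First I would exploit $(C1)$ exactly as in the proof of Corollary \ref{CoreCoreEqu}: inserting $\mc{A}\n\mc{A}^{\#}$ and using $\mc{X}\n\mc{A}^2=\mc{A}$ gives $\mc{X}\n\mc{A}=\mc{X}\n\mc{A}\n\mc{A}^{\#}\n\mc{A}=\mc{X}\n\mc{A}^2\n\mc{A}^{\#}=\mc{A}\n\mc{A}^{\#}=\mc{A}^{\#}\n\mc{A}$, whence $\mc{A}\n\mc{X}\n\mc{A}=\mc{A}\n\mc{A}^{\#}\n\mc{A}=\mc{A}$. Together with the Hermitian condition $(3^T)$ this shows $\mc{X}\in\mc{A}\{1^T,3^T\}$, so Theorem \ref{thm3.61} applies and yields $\mc{A}^{\core}=\mc{A}^{\#}\n\mc{A}\n\mc{X}=\mc{A}\n\mc{A}^{\#}\n\mc{X}$. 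The decisive use of the third hypothesis $\mathfrak{R}(\mc{X})\subseteq\mathfrak{R}(\mc{A})$ then enters through Lemma \ref{range-stan}, which supplies a tensor $\mc{U}$ with $\mc{X}=\mc{A}\n\mc{U}$. Substituting this into the previous identity and using the group-inverse relation $\mc{A}\n\mc{A}^{\#}\n\mc{A}=\mc{A}$ gives $\mc{A}^{\core}=\mc{A}\n\mc{A}^{\#}\n\mc{A}\n\mc{U}=\mc{A}\n\mc{U}=\mc{X}$, completing the argument. (Equivalently, one may verify $(C2)$ directly: from $\mc{X}\in\mc{A}\{1^T,3^T\}$ the tensor $\mc{A}\n\mc{X}$ is a Hermitian idempotent with $\mathfrak{R}(\mc{A}\n\mc{X})=\mathfrak{R}(\mc{A})$, hence $\mc{A}\n\mc{X}=\mc{A}\n\mc{A}^\dg$, and then $\mc{A}\n\mc{X}^2=\mc{A}\n\mc{A}^\dg\n\mc{A}\n\mc{U}=\mc{A}\n\mc{U}=\mc{X}$.)

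The routine part is the algebraic manipulation in the first half, which mirrors computations already performed in Corollary \ref{CoreCoreEqu} and Theorem \ref{thm3.61}. The main obstacle, and the only place the range hypothesis is genuinely needed, is the conversion of the inclusion $\mathfrak{R}(\mc{X})\subseteq\mathfrak{R}(\mc{A})$ into the concrete factorization $\mc{X}=\mc{A}\n\mc{U}$ via Lemma \ref{range-stan}; once this factorization is in hand, collapsing $\mc{A}\n\mc{A}^{\#}\n\mc{A}$ (or $\mc{A}\n\mc{A}^\dg\n\mc{A}$) back to $\mc{A}$ finishes the proof immediately.
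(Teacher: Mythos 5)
Your proof is correct, but it follows a different route than the paper's. The paper also starts from Lemma \ref{range-stan}, writing $\mc{X}=\mc{A}\n\mc{Z}$, but then combines this factorization with $(C1)$ in a single line,
$\mc{X}\n\mc{A}\n\mc{X}=\mc{X}\n\mc{A}^2\n\mc{Z}=\mc{A}\n\mc{Z}=\mc{X}$,
to obtain the outer-inverse condition $(2^T)$, and concludes immediately by citing Theorem \ref{thm3.14} (which says that $(2^T)$, $(3^T)$ and $(C1)$ already force $\mc{X}=\mc{A}^{\core}$). You instead first establish the inner-inverse condition $(1^T)$ by the group-inverse manipulation $\mc{X}\n\mc{A}=\mc{X}\n\mc{A}^2\n\mc{A}^{\#}=\mc{A}\n\mc{A}^{\#}$, invoke Theorem \ref{thm3.61} to get the representation $\mc{A}^{\core}=\mc{A}^{\#}\n\mc{A}\n\mc{X}$, and only then bring in the range hypothesis through Lemma \ref{range-stan} to collapse that representation to $\mc{X}$. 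In effect you have unwound Theorem \ref{thm3.14}: its proof in the paper proceeds precisely by showing $\mc{X}\in\mc{A}\{1^T,3^T\}$ and appealing to Theorem \ref{thm3.61}, so your argument reproduces that mechanism inline, while the paper's proof delegates it. The paper's route is shorter and keeps the group inverse entirely out of sight (it needs the factorization $\mc{X}=\mc{A}\n\mc{Z}$ only once, for the single identity above); yours makes the identification $\mc{X}=\mc{A}^{\core}$ explicit via the representation formula, at the cost of an extra computation. One small caveat: your parenthetical variant, which identifies $\mc{A}\n\mc{X}$ with $\mc{A}\n\mc{A}^\dg$ as a Hermitian idempotent whose range is $\mathfrak{R}(\mc{A})$, silently relies on uniqueness of the orthogonal projector onto $\mathfrak{R}(\mc{A})$ — a fact the paper never states in the tensor setting (though it follows from the reshaping isomorphism); your main argument, however, does not depend on it.
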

\begin{proof}
  To show $\mc{X}$ is the core inverse of $\mc{A},$  it is enough to show only $\mc{X}\n\mc{A}\n\mc{X}=\mc{X}.$ Since $\mathfrak{R}(\mc{X})\subseteq\mathfrak{R}(\mc{A}).$ So by Lemma \ref{range-stan}, there exists a tensor $\mc{Z}\in \mathbb{C}^{I_1\times\cdots\times
I_N \times I_1 \times\cdots\times I_N }$ such that $\mc{X}=\mc{A}\n\mc{Z}.$ From the condition  $\mc{X}\n\mc{A}^2=\mc{A},$ we obtain $$
\mc{X}\n\mc{A}\n\mc{X}=\mc{X}\n\mc{A}\n\mc{A}\n\mc{Z}=\mc{X}\n\mc{A}^2\n\mc{Z}=\mc{A}\n\mc{Z}=\mc{X}.
$$
Therefore, by Theorem \ref{thm3.14}, $\mc{X}$ is the core inverse of $\mc{A}.$
\end{proof}
Note that the converse of Theorem \ref{rangequiv}, is also true and can be verified from the definition of the core inverse.
In the next subsection, we discuss the computation of core inverse for sum of two tensors.

\subsection{Tensor core inverse of the sum of two tensors}\label{SubsecCoreSUm}

Group inverse of sum of two elements over a ring was first introduced by Drazin \cite{draz}.
Some generalization also discussed in \cite{chen2009}. Then the same idea extended to core inverse on a ring in \cite{xu2017}.
The result of \cite{xu2017}, further extended to tensors in \cite{DrazBehAsJ} and stated below.
\begin{theorem}\label{gpsum}
 Let $\mc{A},~ \mc{B}\in \mathbb{C}^{I_1\times\cdots\times
I_N \times I_1 \times\cdots\times I_N }$ be core tensors with $\mc{A}\n\mc{B}=\mc{O}.$ Then
$$
\left(\mc{A}+\mc{B}\right)^{\#}=\left(\mc{I}-\mc{B}\n\mc{B}^{\#}\right)\n\mc{A}^{\#}+\mc{B}^{\#}\n\left(\mc{I}-\mc{A}\n\mc{A}^{\#}\right).
$$
\end{theorem}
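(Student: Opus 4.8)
The plan is to invoke the uniqueness of the group inverse. I would set $\mc{S}:=\mc{A}+\mc{B}$, let $\mc{X}$ denote the tensor on the right-hand side, and then verify that $\mc{X}$ fulfils the three defining relations $(1^T)$, $(2^T)$, $(5^T)$ of the group inverse with respect to $\mc{S}$. Exhibiting such an $\mc{X}$ simultaneously shows that $\mc{S}$ is a core tensor and, by uniqueness, that $\mc{X}=\mc{S}^{\#}$, which is exactly the asserted formula.

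Before the main verification I would record the annihilation identities forced by the hypothesis $\mc{A}\n\mc{B}=\mc{O}$. Using the group-inverse relations $\mc{A}^{\#}=\mc{A}^{\#}\n\mc{A}^{\#}\n\mc{A}$ and $\mc{B}^{\#}=\mc{B}\n\mc{B}^{\#}\n\mc{B}^{\#}$ together with the commutativity $\mc{A}\n\mc{A}^{\#}=\mc{A}^{\#}\n\mc{A}$ and $\mc{B}\n\mc{B}^{\#}=\mc{B}^{\#}\n\mc{B}$, one slides a factor $\mc{A}\n\mc{B}$ into position and deduces $\mc{A}^{\#}\n\mc{B}=\mc{O}$, $\mc{A}\n\mc{B}^{\#}=\mc{O}$ and $\mc{A}^{\#}\n\mc{B}^{\#}=\mc{O}$. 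These will be used repeatedly to kill the mixed terms.

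For the computation I would abbreviate $\mc{P}:=\mc{A}\n\mc{A}^{\#}=\mc{A}^{\#}\n\mc{A}$ and $\mc{Q}:=\mc{B}\n\mc{B}^{\#}=\mc{B}^{\#}\n\mc{B}$, which are idempotent and satisfy $\mc{P}\n\mc{Q}=\mc{O}$, $\mc{P}\n\mc{B}=\mc{O}$. Writing $\mc{X}=\mc{A}^{\#}+\mc{B}^{\#}-\mc{Q}\n\mc{A}^{\#}-\mc{B}^{\#}\n\mc{P}$ and multiplying by $\mc{S}$ on each side, the annihilation identities collapse the cross terms, and I expect to land on the single clean expression $\mc{S}\n\mc{X}=\mc{X}\n\mc{S}=\mc{P}+\mc{Q}-\mc{Q}\n\mc{P}$, which is precisely $(5^T)$. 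Condition $(1^T)$ then follows by right-multiplying this quantity again by $\mc{A}+\mc{B}$ and invoking $\mc{P}\n\mc{A}=\mc{A}$, $\mc{Q}\n\mc{B}=\mc{B}$, $\mc{P}\n\mc{B}=\mc{O}$; condition $(2^T)$ follows by multiplying $\mc{P}+\mc{Q}-\mc{Q}\n\mc{P}$ by $\mc{X}$ and recovering $\mc{X}$ term by term.

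The main obstacle is the asymmetry of the hypothesis: only products in which an $\mc{A}$-type factor stands immediately to the left of a $\mc{B}$-type factor vanish, whereas reversed products such as $\mc{B}\n\mc{A}^{\#}$ or $\mc{B}^{\#}\n\mc{A}$ need not be zero. The bookkeeping must therefore be arranged so that these surviving terms cancel in pairs; for example, the $\mc{B}\n\mc{A}^{\#}$ contribution appearing in $\mc{B}\n\mc{X}$ must cancel against $-\mc{B}^{2}\n\mc{B}^{\#}\n\mc{A}^{\#}=-\mc{B}\n\mc{A}^{\#}$. Choosing exactly the right combination of group-inverse axioms and commutativity to force those cancellations is the delicate part of the argument.
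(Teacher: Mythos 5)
Your proposal is correct, and in fact there is nothing in the paper to compare it against: the paper states Theorem~\ref{gpsum} without proof, importing it from \cite{DrazBehAsJ} as the tensor extension of the ring-theoretic additive result that goes back to \cite{draz} and \cite{xu2017}. Judged on its own merits, your plan goes through completely. The annihilation identities hold exactly as you derive them: from $\mc{A}^{\#}=(\mc{A}^{\#})^{2}\n\mc{A}$ and $\mc{B}^{\#}=\mc{B}\n(\mc{B}^{\#})^{2}$ one gets $\mc{A}^{\#}\n\mc{B}=\mc{A}\n\mc{B}^{\#}=\mc{A}^{\#}\n\mc{B}^{\#}=\mc{O}$. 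With $\mc{P}=\mc{A}\n\mc{A}^{\#}$, $\mc{Q}=\mc{B}\n\mc{B}^{\#}$ and $\mc{X}=\mc{A}^{\#}+\mc{B}^{\#}-\mc{Q}\n\mc{A}^{\#}-\mc{B}^{\#}\n\mc{P}$, the two surviving mixed terms cancel precisely as you anticipate: in $\mc{S}\n\mc{X}$ the term $+\mc{B}\n\mc{A}^{\#}$ cancels against $-\mc{B}\n\mc{Q}\n\mc{A}^{\#}=-\mc{B}^{2}\n\mc{B}^{\#}\n\mc{A}^{\#}=-\mc{B}\n\mc{A}^{\#}$, and in $\mc{X}\n\mc{S}$ the term $+\mc{B}^{\#}\n\mc{A}$ cancels against $-\mc{B}^{\#}\n\mc{P}\n\mc{A}=-\mc{B}^{\#}\n\mc{A}$, yielding $\mc{S}\n\mc{X}=\mc{X}\n\mc{S}=\mc{P}+\mc{Q}-\mc{Q}\n\mc{P}$, i.e.\ $(5^T)$. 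Then $(\mc{P}+\mc{Q}-\mc{Q}\n\mc{P})\n(\mc{A}+\mc{B})=\mc{A}+\mc{Q}\n\mc{A}+\mc{B}-\mc{Q}\n\mc{A}=\mc{A}+\mc{B}$ gives $(1^T)$, and $(\mc{P}+\mc{Q}-\mc{Q}\n\mc{P})\n\mc{X}=\mc{A}^{\#}+\bigl(\mc{B}^{\#}-\mc{B}^{\#}\n\mc{P}\bigr)-\mc{Q}\n\mc{A}^{\#}=\mc{X}$ gives $(2^T)$, exactly as you outline. Exhibiting such an $\mc{X}$ simultaneously establishes that $\mc{A}+\mc{B}$ is group invertible and, by uniqueness, identifies $\mc{X}$ with $(\mc{A}+\mc{B})^{\#}$. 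The only point worth making explicit in a final write-up is the uniqueness of the tensor group inverse itself: the paper uses it tacitly but never proves it, so a one-line standard argument (any two solutions of $(1^T)$, $(2^T)$, $(5^T)$ coincide) should accompany your appeal to it.
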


In the next theorem, we generalize the same result with one more additional condition.
\begin{theorem}\label{crsum}
 Let $\mc{A},~ \mc{B}\in \mathbb{C}^{I_1\times\cdots\times
I_N \times I_1 \times\cdots\times I_N }$ be core tensors with $\mc{A}\n\mc{B}=\mc{O}$ and $\mc{A}^*\n\mc{B}=\mc{O}.$ Then
$
\left(\mc{A}+\mc{B}\right)^{\core}=\left(\mc{I}-\mc{B}\n\mc{B}^{\core}\right)\n\mc{A}^{\core}+\mc{B}^{\core}.$
\end{theorem}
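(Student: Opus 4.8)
The plan is to verify that the candidate tensor
$\mc{X}:=\left(\mc{I}-\mc{B}\n\mc{B}^{\core}\right)\n\mc{A}^{\core}+\mc{B}^{\core}$
satisfies the three defining equations $(C1)$, $(C2)$ and $(3^T)$ of Definition \ref{CoreDefT} relative to $\mc{C}:=\mc{A}+\mc{B}$, and then to conclude by the uniqueness of the core inverse established in Theorem \ref{ThmCoreUniq}. This reduces the whole problem to a chain of substitutions, \emph{provided} enough ``orthogonality'' between the ingredients of $\mc{A}$ and those of $\mc{B}$ is available.

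Accordingly, the first step is to manufacture these orthogonality relations from the two hypotheses. Writing $\mc{A}^{\dg}=(\mc{A}^*\n\mc{A})^{\dg}\n\mc{A}^*$ and using $\mc{A}^*\n\mc{B}=\mc{O}$ yields $\mc{A}^{\dg}\n\mc{B}=\mc{O}$; taking the conjugate transpose of $\mc{A}^*\n\mc{B}=\mc{O}$ gives $\mc{B}^*\n\mc{A}=\mc{O}$, whence $\mc{B}^{\dg}\n\mc{A}=\mc{O}$ in the same way. From $\mc{A}\n\mc{B}=\mc{O}$ together with the identities $\mc{A}^{\#}=(\mc{A}^{\#})^2\n\mc{A}$ and $\mc{B}^{\#}=\mc{B}\n(\mc{B}^{\#})^2$ I get $\mc{A}^{\#}\n\mc{B}=\mc{O}$ and $\mc{A}\n\mc{B}^{\#}=\mc{O}$. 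Feeding these into the representation $\mc{A}^{\core}=\mc{A}^{\#}\n\mc{A}\n\mc{A}^{\dg}=\mc{A}\n\mc{A}^{\#}\n\mc{A}^{\dg}$ of Theorem \ref{thm3.1}(a) upgrades them to $\mc{A}^{\core}\n\mc{B}=\mc{O}$, $\mc{B}^{\core}\n\mc{A}=\mc{O}$ and $\mc{A}\n\mc{B}^{\core}=\mc{O}$. Finally, Theorem \ref{thm3.1}(b) with $n=1$ gives $\mc{B}\n\mc{B}^{\core}=\mc{B}\n\mc{B}^{\dg}$, the orthogonal projector onto $\mathfrak{R}(\mc{B})$; since $\mc{B}^{\dg}\n\mc{A}=\mc{O}$ forces $\mc{B}\n\mc{B}^{\dg}\n\mc{A}^{\core}=\mc{O}$, the leading factor collapses and $\mc{X}=\mc{A}^{\core}+\mc{B}^{\core}$.

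With $\mc{X}=\mc{A}^{\core}+\mc{B}^{\core}$ available, the verification is then routine. For $(3^T)$ one expands $\mc{C}\n\mc{X}=\mc{A}\n\mc{A}^{\core}+\mc{A}\n\mc{B}^{\core}+\mc{B}\n\mc{A}^{\core}+\mc{B}\n\mc{B}^{\core}$, discards the mixed products, and is left with $\mc{A}\n\mc{A}^{\dg}+\mc{B}\n\mc{B}^{\dg}$, a sum of two Hermitian projectors, which is Hermitian. For $(C1)$ and $(C2)$ one expands $\mc{C}^2$ and $\mc{X}^2$, applies $(C1)$ and $(C2)$ for $\mc{A}$ and for $\mc{B}$ to the diagonal terms, and cancels the off-diagonal terms using the relations above.

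The main obstacle is precisely the control of these mixed terms, and here the asymmetry of the hypotheses is decisive. The relations produced above annihilate every product of the forms $(\ast_{\mc{A}})\n\mc{B}$, $\mc{A}\n(\ast_{\mc{B}})$ and $(\ast_{\mc{B}})\n\mc{A}$, where $\ast$ is a generalized inverse of $\mc{A}$ or $\mc{B}$, but \emph{none} of them touches the plain product $\mc{B}\n\mc{A}$. The terms that actually threaten the proof are exactly of this last type: $\mc{B}^{\core}\n\mc{B}\n\mc{A}=\mc{B}^{\#}\n\mc{B}\n\mc{A}$ (which enters $(C1)$ after Theorem \ref{thm3.1}(g)) and $\mc{B}\n\mc{A}^{\core}=\mc{B}\n\mc{A}\n\mc{A}^{\#}\n\mc{A}^{\dg}$ (which enters $(3^T)$ and $(C2)$). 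I would therefore focus the argument on showing these vanish; since $\mc{A}\n\mc{B}=\mc{O}$ and $\mc{A}^*\n\mc{B}=\mc{O}$ only place $\mathfrak{R}(\mc{B})$ inside $\mc{N}(\mc{A})\cap\mathfrak{R}(\mc{A})^{\perp}$ and impose nothing on $\mathfrak{R}(\mc{A})$ relative to $\mc{N}(\mc{B})$, I would check carefully whether $\mc{B}\n\mc{A}=\mc{O}$ is genuinely forced by the stated assumptions or must be supplied as an extra hypothesis, as this is the exact point on which the identity hinges.
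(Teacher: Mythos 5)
Your proposal does not reach a proof, and it could not have: the obstruction you isolated at the end is fatal to the statement as printed, and your instinct there was exactly right. Your preparatory work is correct --- in particular the collapse $(\mc{I}-\mc{B}\n\mc{B}^{\core})\n\mc{A}^{\core}=\mc{A}^{\core}$, since $\mc{B}\n\mc{B}^{\core}=\mc{B}\n\mc{B}^{\dg}$ and $\mc{B}^{\dg}\n\mc{A}^{\core}=\mc{B}^{\dg}\n\mc{A}\n\mc{A}^{\#}\n\mc{A}^{\dg}=\mc{O}$ --- so the stated right-hand side equals $\mc{A}^{\core}+\mc{B}^{\core}$, and the terms $\mc{B}\n\mc{A}^{\core}$ and $\mc{B}^{\core}\n\mc{B}\n\mc{A}$ that you could not remove genuinely survive; nor is $\mc{B}\n\mc{A}=\mc{O}$ forced by the hypotheses. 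A matrix example (order-two tensors with $N=1$, so $\n$ is ordinary matrix multiplication) settles everything: take
\[
\mc{A}=\begin{pmatrix}1&0\\0&0\end{pmatrix},\qquad
\mc{B}=\begin{pmatrix}0&0\\1&1\end{pmatrix}.
\]
Both are idempotent, hence core tensors, and $\mc{A}\n\mc{B}=\mc{O}$, $\mc{A}^{*}\n\mc{B}=\mc{O}$, while $\mc{B}\n\mc{A}\neq\mc{O}$. Here $\mc{A}^{\core}=\mc{A}$ and $\mc{B}^{\core}=\begin{pmatrix}0&0\\0&1\end{pmatrix}$ (check $(C1)$, $(C2)$, $(3^T)$ directly), so the stated right-hand side is $(\mc{I}-\mc{B}\n\mc{B}^{\core})\n\mc{A}^{\core}+\mc{B}^{\core}=\mc{I}$; but $\mc{A}+\mc{B}=\begin{pmatrix}1&0\\1&1\end{pmatrix}$ is invertible, so $(\mc{A}+\mc{B})^{\core}=(\mc{A}+\mc{B})^{-1}=\begin{pmatrix}1&0\\-1&1\end{pmatrix}\neq\mc{I}$. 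Theorem \ref{crsum} as stated is false, so no blind proof of it can close.

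The repair, however, is not the extra hypothesis $\mc{B}\n\mc{A}=\mc{O}$ (that is what the subsequent corollary adds); it is a different projector. The true identity is
\[
(\mc{A}+\mc{B})^{\core}=\left(\mc{I}-\mc{B}^{\core}\n\mc{B}\right)\n\mc{A}^{\core}+\mc{B}^{\core},
\]
with the group projector $\mc{B}^{\core}\n\mc{B}=\mc{B}\n\mc{B}^{\#}$ in place of the orthogonal projector $\mc{B}\n\mc{B}^{\core}=\mc{B}\n\mc{B}^{\dg}$; in the example above this yields $\begin{pmatrix}1&0\\-1&1\end{pmatrix}$, as it should. With this candidate $\mc{Y}$ your plan succeeds verbatim, because the dangerous $\mc{B}\n\mc{A}$-type terms now cancel in pairs instead of having to vanish individually: using $(\mc{A}+\mc{B})^{2}=\mc{A}^{2}+\mc{B}\n\mc{A}+\mc{B}^{2}$ and your orthogonality relations,
\[
\mc{Y}\n(\mc{A}+\mc{B})^{2}
=\mc{A}-\mc{B}^{\core}\n\mc{B}\n\mc{A}+\mc{B}^{\core}\n\mc{B}\n\mc{A}+\mc{B}
=\mc{A}+\mc{B},
\]
which is $(C1)$; for $(3^T)$, $(\mc{A}+\mc{B})\n\mc{Y}=\mc{A}\n\mc{A}^{\core}+\mc{B}\n\mc{B}^{\core}$ is Hermitian, the term $\mc{B}\n\mc{A}^{\core}$ being annihilated by $-\mc{B}\n\mc{B}^{\core}\n\mc{B}\n\mc{A}^{\core}=-\mc{B}\n\mc{A}^{\core}$; and $(C2)$, $(\mc{A}+\mc{B})\n\mc{Y}^{2}=\mc{Y}$, follows the same way, after which Theorem \ref{ThmCoreUniq} finishes. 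For comparison, the paper takes a different route --- it feeds the group-inverse sum formula of Theorem \ref{gpsum} into the representation $(\mc{A}+\mc{B})^{\core}=(\mc{A}+\mc{B})^{\#}\n(\mc{A}+\mc{B})\n\mc{X}$ of Theorem \ref{thm3.61}, where $\mc{X}$ must be a $\{1^T,3^T\}$-inverse of the sum --- but in doing so it commits exactly the error you flagged: when checking the $\{1^T,3^T\}$ property of the stated candidate it silently drops the nonzero term $\mc{B}\n\mc{A}^{\core}$, and the expression its computation finally produces is $(\mc{I}-\mc{B}^{\core}\n\mc{B})\n\mc{A}^{\core}+\mc{B}^{\core}$, i.e.\ the corrected formula, not the one in the theorem statement. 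So your analysis exposed a real defect in the paper, and your direct-verification strategy, applied to the corrected formula, gives a sound and cleaner proof.
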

\begin{proof}
By Theorem \ref{gpsum}, it follows that
\[
 \left(\mc{A}+\mc{B}\right)^{\#}=\left(\mc{I}-\mc{B}\n\mc{B}^{\#}\right)\n\mc{A}^{\#}+\mc{B}^{\#}\n\left(\mc{I}-\mc{A}\n\mc{A}^{\#}\right)
 =\left(\mc{I}-\mc{B}^{\#}\n\mc{B}\right)\n\mc{A}^{\#}+\mc{B}^{\#}\n\left(\mc{I}-\mc{A}^{\#}\n\mc{A}\right).
 \]
Using Theorem \ref{thm3.1}(f), we further obtain
 \begin{eqnarray*}
 \left(\mc{A}+\mc{B}\right)^{\#}&=&\left(\mc{I}-(\mc{B}^{\core})^2\n\mc{B}^2\right)\n(\mc{A}^{\core})^2\n\mc{A}+(\mc{B}^{\core})^2\n\mc{B}\n\left(\mc{I}-(\mc{A}^{\core})^2\n\mc{A}^2\right)\\
 &=&\left(\mc{I}-\mc{B}^{\core}\n\mc{B}\right)\n(\mc{A}^{\core})^2\n\mc{A}+(\mc{B}^{\core})^2\n\mc{B}\n\left(\mc{I}-\mc{A}^{\core}\n\mc{A}\right).
 \end{eqnarray*}
 Since $\mc{A}\n\mc{B}=\mc{O}$ and $\mc{A}^*\n\mc{B}=\mc{O}$, the following transformations follows:
 $$
 \aligned
 & \mc{A}\n\mc{B}^{\core}=\mc{A}\n\mc{B}\n(\mc{B}^{\core})^2=\mc{O},\\
 &\mc{B}^{\core}\n\mc{A}=\mc{B}^{\core}\n\mc{B}\n\mc{B}^{\core}\n\mc{A}=\mc{B}^{\core}\n(\mc{B}\n\mc{B}^{\core})\n\mc{A}\\
 & \qquad \quad \ \ =\mc{B}^{\core}\n(\mc{B}^{\core})^*\n\mc{B}^{*}\n\mc{A}=\mc{B}^{\core}\n(\mc{B}^{\core})^*\n(\mc{A}^*\n\mc{B})^*=\mc{O},\\
 & \mc{A}^{\core}\n\mc{B}=\mc{A}^{\core}\n\mc{A}\n\mc{A}^{\core}\n\mc{B}=\mc{A}^{\core}\n(\mc{A}\n\mc{A}^{\core})^*\n\mc{B}=\mc{A}^{\core}\n(\mc{A}^{\core})^*\n\mc{A}^*\n\mc{B}=\mc{O}.
 \endaligned
 $$
 Let $\mc{X}=\left(\mc{I}-\mc{B}\n\mc{B}^{\core}\right)\n\mc{A}^{\core}+\mc{B}^{\core}.$
 Now we have
 \begin{equation*}
 \aligned
 (\mc{A}+\mc{B})\n\mc{X}&=(\mc{A}+\mc{B})\n\left[\left(\mc{I}-\mc{B}\n\mc{B}^{\core}\right)\n\mc{A}^{\core}+\mc{B}^{\core}\right]\\
 &=\mc{A}\n\left(\mc{I}-\mc{B}\n\mc{B}^{\core}\right)\n\mc{A}^{\core}+\mc{B}\n\left(\mc{I}-\mc{B}\n\mc{B}^{\core}\right)\n\mc{A}^{\core}+\mc{A}\n\mc{B}^{\core}+\mc{B}\n\mc{B}^{\core}\\
  &=\mc{A}\n\left(\mc{I}-\mc{B}\n\mc{B}^{\core}\right)\n\mc{A}^{\core}+\mc{B}\n\mc{B}^{\core}=\mc{A}\n\mc{A}^{\core}+\mc{B}\n\mc{B}^{\core}.
  \endaligned
 \end{equation*}
 Thus $\left((\mc{A}+\mc{B})\n\mc{X}\right)^*=(\mc{A}\n\mc{A}^{\core})^*+(\mc{B}\n\mc{B}^{\core})^*=(\mc{A}+\mc{B})\n\mc{X}.$ Also we have
  \begin{eqnarray*}
 (\mc{A}+\mc{B})\n\mc{X}\n(\mc{A}+\mc{B})&=&(\mc{A}\n\mc{A}^{\core}+\mc{B}\n\mc{B}^{\core})\n(\mc{A}+\mc{B})\\
 &=&\mc{A}\n\mc{A}^{\core}\mc{A}+\mc{B}\n\mc{B}^{\core}\n\mc{B}=\mc{A}+\mc{B}.
  \end{eqnarray*}
  Since $\mc{A}$ is $\{1^T,3^T\}$ inverse of $(\mc{A}+\mc{B}),$ by Theorem \ref{thm3.61},
  \begin{equation*}
  \aligned
   (\mc{A}+\mc{B})^{\core}&= (\mc{A}+\mc{B})^{\#}\n (\mc{A}+\mc{B})\n\mc{X}\\
   &= \left[\left(\mc{I}-\mc{B}^{\core}\n\mc{B}\right)\n(\mc{A}^{\core})^2\n\mc{A}+(\mc{B}^{\core})^2\n\mc{B}\n\left(\mc{I}-\mc{A}^{\core}\n\mc{A}\right)\right]\n(\mc{A}+\mc{B})\n\\
   &\hspace{0.5cm}\left[\left(\mc{I}-\mc{B}\n\mc{B}^{\core}\right)\n\mc{A}^{\core}+\mc{B}^{\core}\right]\\
   &=\left[\left(\mc{I}-\mc{B}^{\core}\n\mc{B}\right)\n(\mc{A}^{\core})^2\n\mc{A}^2+(\mc{B}^{\core})^2\n\mc{B}^2\right]\n\left[\left(\mc{I}-\mc{B}\n\mc{B}^{\core}\right)\n\mc{A}^{\core}+\mc{B}^{\core}\right]\\
   &=\left[\left(\mc{I}-\mc{B}^{\core}\n\mc{B}\right)\n\mc{A}^{\core}\n\mc{A}+\mc{B}^{\core}\n\mc{B}\right]\n\left[\left(\mc{I}-\mc{B}\n\mc{B}^{\core}\right)\n\mc{A}^{\core}+\mc{B}^{\core}\right]\\
   &=\left(\mc{A}^{\core}\n\mc{A}-\mc{B}^{\core}\n\mc{B}\n\mc{A}^{\core}\n\mc{A}+\mc{B}^{\core}\n\mc{B}\right)\n\left(\mc{A}^{\core}-\mc{B}^{\core}\n\mc{B}\n\mc{A}^{\core}+\mc{B}^{\core}\right)\\
   &=\mc{A}^{\core}-\mc{B}^{\core}\n\mc{B}\n\mc{A}^{\core}+\mc{B}^{\core}\n\mc{B}\n\mc{A}^{\core}-\mc{B}^{\core}\n\mc{B}\n\mc{A}^{\core}+\mc{B}^{\core}\\
   &=\mc{A}^{\core}-\mc{B}^{\core}\n\mc{B}\n\mc{A}^{\core}\n\mc{A}+\mc{B}^{\core}=\left(\mc{I}-\mc{B}^{\core}\n\mc{B}\right)\n\mc{A}^{\core}\n+\mc{B}^{\core}.
   \endaligned
    \end{equation*}
    \vskip -0.3cm
\end{proof}

The necessity of the condition $\mc{A}\n\mc{B}=\mc{O}$ in Theorem \ref{crsum} can can verified in the following example.
\begin{example}
Consider the tensor $\mc{A}=(a_{ijkl}) \in \mathbb{R}^{(2\times3)\times (2\times3)}$ defined in Example \ref{Example1} and  a tensor $~\mc{B}=(b_{ijkl})
 \in \mathbb{R}^{\overline{2\times3}\times\overline{2\times3}}$ with entries
\begin{eqnarray*}
\mc{A}_{ij11} =
    \begin{bmatrix}
    -1 & -1 & -3 \\
    -1 & -2 &  0
    \end{bmatrix},~
\mc{A}_{ij12} = a_{ij13} =\mc{A}_{ij21} =   \mc{A}_{ij22} = \mc{A}_{ij23} =
    \begin{bmatrix}
     0 & 0 & 0\\
     0 & 0 & 0
    \end{bmatrix}.
\end{eqnarray*}
It is easy to verify $\mc{A}*_2\mc{B} \neq \mc{O}$, $\mc{A}+\mc{B} \neq \mc{O},$ and $(\mc{A}+\mc{B})^2 = \mc{O}.$
Since $\mathfrak{R}(\mc{A}+\mc{B})\neq \mathfrak{R}(\mc{A}+\mc{B})^2,$ so the core inverse of $A+B$ does not exist, whereas  $A$ and $B$ are core invertible.
\end{example}
Notice that, if $\mc{B}\n\mc{A}=\mc{O},$ then $\mc{B}\n\mc{A}^{\core}=\mc{B}\n\mc{A}\n(\mc{A}^{\core})^2=\mc{O}.$
So, with this additional assumption, we have the following corollary.
\begin{corollary}
 Let $\mc{A},~ \mc{B}\in \mathbb{C}^{I(N)\times I(N)}$ be core tensors with $\mc{A}\n\mc{B}=\mc{O}=\mc{B}\n\mc{A}$ and $\mc{A}^*\n\mc{B}=\mc{O}.$
 Then
$\left(\mc{A}+\mc{B}\right)^{\core}=\mc{A}^{\core}+\mc{B}^{\core}.$
\end{corollary}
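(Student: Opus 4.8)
The plan is to deduce this directly from Theorem \ref{crsum}, whose hypotheses ($\mc{A}\n\mc{B}=\mc{O}$ and $\mc{A}^*\n\mc{B}=\mc{O}$) are already among the assumptions here; the extra hypothesis $\mc{B}\n\mc{A}=\mc{O}$ will be used only to annihilate the surviving cross term in the formula obtained there. Concretely, the proof of Theorem \ref{crsum} establishes $(\mc{A}+\mc{B})^{\core}=(\mc{I}-\mc{B}^{\core}\n\mc{B})\n\mc{A}^{\core}+\mc{B}^{\core}=\mc{A}^{\core}-\mc{B}^{\core}\n\mc{B}\n\mc{A}^{\core}+\mc{B}^{\core}$, so the whole task reduces to showing that the middle term $\mc{B}^{\core}\n\mc{B}\n\mc{A}^{\core}$ vanishes.

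First I would record the consequence of the new hypothesis already noted just before the statement: since $\mc{A}^{\core}$ satisfies $(C2)$, i.e. $\mc{A}\n(\mc{A}^{\core})^2=\mc{A}^{\core}$, the assumption $\mc{B}\n\mc{A}=\mc{O}$ gives $\mc{B}\n\mc{A}^{\core}=\mc{B}\n\mc{A}\n(\mc{A}^{\core})^2=\mc{O}$. Then the cross term collapses immediately, $\mc{B}^{\core}\n\mc{B}\n\mc{A}^{\core}=\mc{B}^{\core}\n(\mc{B}\n\mc{A}^{\core})=\mc{O}$, and substituting back yields $(\mc{A}+\mc{B})^{\core}=\mc{A}^{\core}+\mc{B}^{\core}$, as claimed.

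There is essentially no obstacle here beyond bookkeeping, since all of the analytic content is already contained in Theorem \ref{crsum}. The one point that requires a little care is to invoke the correct ordering in the sum formula, namely the expression with $\mc{B}^{\core}\n\mc{B}$ (rather than $\mc{B}\n\mc{B}^{\core}$) as it is produced at the end of the proof of Theorem \ref{crsum}, because then $\mc{B}\n\mc{A}^{\core}=\mc{O}$ directly annihilates the factor $\mc{A}^{\core}$ standing immediately to the right of $\mc{B}$. Were one to work instead from the literal form $(\mc{I}-\mc{B}\n\mc{B}^{\core})\n\mc{A}^{\core}+\mc{B}^{\core}$, one would additionally have to argue that $\mc{B}\n\mc{B}^{\core}\n\mc{A}^{\core}=\mc{O}$, which is not as immediate; so aligning the computation with the ordering used in the proof of Theorem \ref{crsum} is the efficient route.
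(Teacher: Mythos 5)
Your proposal is correct and is essentially the paper's own argument: the paper proves this corollary by the one-line observation that $\mc{B}\n\mc{A}=\mc{O}$ implies $\mc{B}\n\mc{A}^{\core}=\mc{B}\n\mc{A}\n(\mc{A}^{\core})^2=\mc{O}$, which annihilates the cross term in the formula of Theorem \ref{crsum} exactly as you do. Your extra remark about using the ordering $\mc{B}^{\core}\n\mc{B}$ (as produced at the end of the proof of Theorem \ref{crsum}) rather than the $\mc{B}\n\mc{B}^{\core}$ appearing in its statement is a genuine point of care that the paper glosses over, so your write-up is, if anything, more precise than the original.
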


\section{Tensor core-EP inverse}\label{SecCoreEP}

The core-EP inverse $X$ of a square matrix $A$ with $\ind{A}=k$ is defined in \cite{prasad} as the outer inverse satisfying $\R(X)=\R(A^k),\N(X)=\N((A^k)^*)$.
Also, the core-EP inverse $X$ is the unique $(1^k)$,$(2)$,$(3)$ inverse satisfying $\R(X)\subset \R(A^k)$ \cite{prasad}.

In this section, we will extend the notion of the core inverse to the core-EP inverse for square tensors.
The core-EP inverse, as the core inverse, is unique.
Some characterizations of the tensor core-EP inverse with respect to the tensor Drazin inverse, group inverse and the tensor Moore-Penrose inverse will be presented in this section.

\smallskip
The definition of the core-EP inverse of square tensors is introduced in Definition \ref{epdef}.

 \begin{definition}\label{epdef}
Let $\mc{A}\in \mathbb{C}^{I(N) \times I(N)}$ and $\ind{\mc{A}}=k.$ A tensor $\mc{X}\in \mathbb{C}^{I(N) \times I(N)}$  satisfying
\begin{enumerate}
    \item[\rm $(EP)$] $\mc{X}\n\mc{A}^{k+1}=\mc{A}^k$
    \item[\rm $(C2)$] $\mc{A}\n\mc{X}^2=\mc{X}$
    \item[\rm ($3^T$)] $(\mc{A}\n\mc{X})^*=\mc{A}\n\mc{X}$
\end{enumerate}
is called core-EP inverse of $\mc{A}$ and it is denoted as $\mc{A}^{\ep}$.
\end{definition}

Clearly, if $k=1$, then $\mc{A}^{\ep}=\mc{A}^{\core}$.

Before proving the uniqueness of the core-EP inverse, we first prove some preliminary results.
\begin{lemma}\label{lm4.2}
Let $\mc{A}\in\mathbb{C}^{I(N) \times I(N)}$ satisfy $\ind{\mc{A}}=k$.
If a tensor $\mc{X}\in\mathbb{C}^{I(N) \times I(N)}$ satisfies $(EP)$ and $(C2)$, 
then the following holds:
\begin{enumerate}
\item [\bf (a)]  $\mc{A}\n\mc{X} = \mc{A}^{m}\n\mc{X}^{m},~m\in\mathbb{N}.$
\item [\bf (b)] $\mc{X}\n\mc{A}\n\mc{X} = \mc{X}.$
\item [\bf (c)] $\mc{A}^{m}\n\mc{X}^{m}\n\mc{A}^{m} = \mc{A}^{m}$ for $ m \geq k. $
\item[\bf (d)] $\mc{X}^{m+1}\n\mc{A}^m=\mc{A}^\D$ when $m\geq k=\ind{\mc{A}}.$
\end{enumerate}

\begin{proof}
\noindent {\bf (a)} Using (C2) repetitively, we obtain
\begin{eqnarray*}
\mc{A}\n\mc{X} &=& \mc{A}\n\mc{A}\n\mc{X}^{2} =  \mc{A}^{2}\n\mc{X}^{2} = \mc{A}^{2}\n\mc{X}\n\mc{X} = \mc{A}^{2}\n\mc{A}\n\mc{X}^{2}\n\mc{X} = \mc{A}^{3}\n\mc{X}^{3}\\
&=& \cdots =  \mc{A}^{m}\n\mc{X}^{m}
\end{eqnarray*}

\noindent {\bf (b)} Again applying (EP) and (C2), we get
$$\mc{X}\n\mc{A}\n\mc{X} = \mc{X}\n\mc{A}^{m+1}\n\mc{X}^{m+1} = \mc{A}^{m}\n\mc{X}^{m+1} = \mc{A}^{m}\n\mc{X}^{m}\n\mc{X} =\mc{A}\n\mc{X}\n\mc{X} = \mc{A}\n\mc{X}^{2} = \mc{X}.$$

\noindent {\bf (c)} To claim next part let $m\geq k.$
As
$$
\aligned
\mc{A}^{m} &= \mc{A}^{k}\n\mc{A}^{m-k} \\
&= \mc{X}\n\mc{A}^{k+1}\n\mc{A}^{m-k}\\
 &= \mc{X}\n\mc{A}^{m+1} = \mc{A}\n\mc{X}^{2}\n\mc{A}^{m+1}\\
 &= \mc{A}\n\mc{X}\n\mc{X}\n  \mc{A}^{m+1} = \mc{A}^{m}\n\mc{X}^{m}\n\mc{A}^{m}.
\endaligned
$$

\noindent {\bf (d)} To show the last part, let $\mc{B} := \mc{X}^{m+1}\n\mc{A}^m$.
Then
$$\mc{A}^{m}\n\mc{B}\n\mc{A} =\mc{A}^{m}\n\mc{X}^{m+1}\n\mc{A}^m\n\mc{A}= \mc{A}^{m}\n\mc{X}^{m}\n\left(\mc{X}\n\mc{A}^{m+1}\right)= \mc{A}^{m}\n\mc{X}^{m}\n\mc{A}^{m}=\mc{A}^{m}.$$
Using the given assumption, we again obtain
$$
\aligned
\mc{B}\n\mc{A}\n\mc{B} &=\mc{X}^{m+1}\n\mc{A}^{m}\n\mc{A}\n\mc{X}^{m+1}\n\mc{A}^{m}\\
&= \mc{X}^{m+1}\n\mc{A}^{m+1}\n\mc{X}^{m+1}\n\mc{A}^{m}\\
&= \mc{X}^{m}\n\mc{A}^{m}\n\mc{X}^{m+1}\n\mc{A}^{m}\\
&=\mc{X}^{m}\n\mc{A}^{m}\n\mc{X}^{m}\n\mc{X}\n\mc{A}^{m}\\
&=\mc{X}^{m}\n\mc{A}\n\mc{X}\n\mc{X}\n\mc{A}^{m}=\mc{X}^{m}\n\mc{A}\n\mc{X}^{2}\n\mc{A}^{m}\\
&=\mc{X}^{m}\n\mc{X}\n\mc{A}^{m}=\mc{X}^{m+1}\n\mc{A}^{m}\\
&=\mc{B}
\endaligned
$$ and
$$
\aligned
\mc{A}\n\mc{B} &=\mc{A}\n\mc{X}^{m+1}\n\mc{A}^{m}= \mc{A}\n\mc{X}^2\n\mc{X}^{m-1}\n\mc{A}^{m}\\
&= \mc{X}\n\mc{X}^{m-1}\n\mc{A}^{m}=\mc{X}^{m}\n\mc{A}^{m}=\mc{X}^{m+1}\n\mc{A}^{m+1}\\
&=\mc{X}^{m+1}\n\mc{A}^{m}\n\mc{A}=\mc{B}\n\mc{A}.
\endaligned
$$
Thus $\mc{B} = \mc{X}^{m+1}\n\mc{A}^m$  is the Drazin inverse of $\mc{A}.$
 Hence, the proof is complete.
\end{proof}
\end{lemma}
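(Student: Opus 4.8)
The plan is to regard $(EP)$ and $(C2)$ as the only available identities (the symmetry condition $(3^T)$ plays no role in this lemma) and to develop the four assertions cumulatively, with part \textbf{(a)} supplying the algebraic engine for the rest.

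First I would prove \textbf{(a)} by induction on $m$, the base case being trivial. For the inductive step I rewrite the leftmost factor $\mc{X}$ in $\mc{A}^m\n\mc{X}^m$ using $(C2)$ in the form $\mc{X}=\mc{A}\n\mc{X}^2$, namely $\mc{A}^m\n\mc{X}^m=\mc{A}^m\n(\mc{A}\n\mc{X}^2)\n\mc{X}^{m-1}=\mc{A}^{m+1}\n\mc{X}^{m+1}$; since the induction hypothesis gives $\mc{A}\n\mc{X}=\mc{A}^m\n\mc{X}^m$, this yields $\mc{A}\n\mc{X}=\mc{A}^{m+1}\n\mc{X}^{m+1}$. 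Because $\n$ is not commutative, the discipline here is to always absorb the factor $\mc{A}$ coming from $\mc{A}\n\mc{X}^2$ into the existing $\mc{A}$-block immediately on its left, never splitting the powers in the wrong place.

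Next I would treat \textbf{(b)} and \textbf{(c)}. For \textbf{(b)}, I apply \textbf{(a)} at level $m=k+1$ to replace $\mc{A}\n\mc{X}$ inside $\mc{X}\n\mc{A}\n\mc{X}$ by $\mc{A}^{k+1}\n\mc{X}^{k+1}$, then use $(EP)$ to collapse $\mc{X}\n\mc{A}^{k+1}$ to $\mc{A}^k$, and finally use \textbf{(a)} at level $k$ together with $(C2)$ to arrive at $\mc{A}\n\mc{X}^2=\mc{X}$. Along the way it is convenient to record the stepping-stone identity $\mc{X}\n\mc{A}^{m+1}=\mc{A}^m$ for every $m\geq k$, which follows by writing $\mc{A}^m=\mc{A}^k\n\mc{A}^{m-k}$ and substituting $(EP)$. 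For \textbf{(c)} I use \textbf{(a)} to turn $\mc{A}^m\n\mc{X}^m$ into $\mc{A}\n\mc{X}$, so the goal reduces to showing $\mc{A}\n\mc{X}\n\mc{A}^m=\mc{A}^m$; substituting $\mc{A}^m=\mc{X}\n\mc{A}^{m+1}$ from the stepping-stone identity and then applying $(C2)$ in the form $\mc{A}\n\mc{X}^2=\mc{X}$ closes this case.

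Finally, for \textbf{(d)} I would verify that $\mc{B}:=\mc{X}^{m+1}\n\mc{A}^m$ meets the three defining relations of the Drazin inverse, namely the $((1^k)^T)$-type relation, $(2^T)$, and the commuting relation $(5^T)$. The relation $\mc{A}^m\n\mc{B}\n\mc{A}=\mc{A}^m$ falls out of the stepping-stone identity together with part \textbf{(c)}; the commutation $\mc{A}\n\mc{B}=\mc{B}\n\mc{A}$ is obtained by reducing both sides to $\mc{X}^m\n\mc{A}^m$ using $(C2)$ and the stepping-stone identity; and the outer-inverse relation $\mc{B}\n\mc{A}\n\mc{B}=\mc{B}$ is the longest, repeatedly collapsing blocks of the form $\mc{X}^{m+1}\n\mc{A}^{m+1}$ to $\mc{X}^m\n\mc{A}^m$ and $\mc{A}\n\mc{X}^2$ to $\mc{X}$, with one appeal to \textbf{(b)}. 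I expect this last verification to be the main obstacle: the non-commutativity of $\n$ means every simplification must target a specific adjacent pair, so keeping the $\mc{A}$- and $\mc{X}$-exponents balanced through the chain is the only delicate point, while everything else is a short substitution.
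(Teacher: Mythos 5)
Your proposal is correct and follows essentially the same route as the paper's proof: part \textbf{(a)} by repeated use of $(C2)$ (your induction is just the paper's ``$\cdots$'' made precise), the identity $\mc{X}\n\mc{A}^{m+1}=\mc{A}^m$ for $m\geq k$ (which the paper uses implicitly in parts \textbf{(b)}--\textbf{(d)} and you sensibly record as a lemma), and verification that $\mc{B}=\mc{X}^{m+1}\n\mc{A}^m$ satisfies the three defining relations of the Drazin inverse. The remaining differences are cosmetic: in \textbf{(c)} you reduce the goal $\mc{A}^m\n\mc{X}^m\n\mc{A}^m=\mc{A}^m$ backwards via \textbf{(a)} while the paper expands $\mc{A}^m$ forwards, and in the relation $\mc{B}\n\mc{A}\n\mc{B}=\mc{B}$ you appeal to \textbf{(b)} where the paper uses \textbf{(a)} with $(C2)$; both collapse the same blocks.
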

The next theorem proves the uniqueness of the core-EP inverse.
\begin{theorem}\label{thm4.3}
   Let $\mc{A}\in\mathbb{C}^{I(N) \times I(N)}$ and $\ind{\mc{A}}=k.$
Then $A$ has a unique core-EP inverse in $\mathbb{C}^{I(N) \times I(N)}.$
   \end{theorem}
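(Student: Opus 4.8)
The plan is to avoid the computational route used for the core inverse and instead exhibit a closed form for any core-EP inverse that depends only on $\mc{A}$. The two ingredients will be the Drazin inverse $\mc{A}^\D$, which is unique, and the orthogonal projector onto $\mathfrak{R}(\mc{A}^k)$; writing every candidate $\mc{X}$ as the product of these two forces uniqueness immediately.

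First I would show that for any $\mc{X}$ satisfying $(EP)$, $(C2)$, $(3^T)$, the product $\mc{A}\n\mc{X}$ is the orthogonal projector onto $\mathfrak{R}(\mc{A}^k)$. By Lemma \ref{lm4.2}(b) we have $\mc{X}\n\mc{A}\n\mc{X}=\mc{X}$, so $(\mc{A}\n\mc{X})\n(\mc{A}\n\mc{X})=\mc{A}\n(\mc{X}\n\mc{A}\n\mc{X})=\mc{A}\n\mc{X}$, i.e.\ $\mc{A}\n\mc{X}$ is idempotent; together with $(3^T)$ it is a Hermitian idempotent. Its range is pinned down by Lemma \ref{lm4.2}(a) with $m=k$, which gives $\mc{A}\n\mc{X}=\mc{A}^k\n\mc{X}^k$ and hence $\mathfrak{R}(\mc{A}\n\mc{X})\subseteq\mathfrak{R}(\mc{A}^k)$, and by Lemma \ref{lm4.2}(c), namely $\mc{A}^k=\mc{A}^k\n\mc{X}^k\n\mc{A}^k$, which gives the reverse inclusion $\mathfrak{R}(\mc{A}^k)\subseteq\mathfrak{R}(\mc{A}^k\n\mc{X}^k)=\mathfrak{R}(\mc{A}\n\mc{X})$. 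Thus $\mc{A}\n\mc{X}$ is a Hermitian idempotent with range exactly $\mathfrak{R}(\mc{A}^k)$, so it is the orthogonal projector onto $\mathfrak{R}(\mc{A}^k)$.

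Second I would establish $\mc{A}^\D\n\mc{A}\n\mc{X}=\mc{X}$. Iterating $(C2)$ yields $\mc{X}=\mc{A}^m\n\mc{X}^{m+1}$ for all $m$, whence $\mathfrak{R}(\mc{X})\subseteq\mathfrak{R}(\mc{A}^m)=\mathfrak{R}(\mc{A}^k)$ for $m\geq k$. Since $\mc{A}^\D$ commutes with $\mc{A}$ and satisfies the standard identity $\mc{A}^\D\n\mc{A}^{k+1}=\mc{A}^k$, the tensor $\mc{A}^\D\n\mc{A}$ acts as the identity on $\mathfrak{R}(\mc{A}^k)$ and therefore fixes $\mc{X}$. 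Combining the two steps gives $\mc{X}=\mc{A}^\D\n\mc{A}\n\mc{X}=\mc{A}^\D\n(\mc{A}\n\mc{X})$, a product of two tensors determined by $\mc{A}$ alone.

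To finish, given two core-EP inverses $\mc{X}_1,\mc{X}_2$, the first step forces $\mc{A}\n\mc{X}_1=\mc{A}\n\mc{X}_2$ since two Hermitian idempotents with the same range coincide, and then $\mc{X}_1=\mc{A}^\D\n\mc{A}\n\mc{X}_1=\mc{A}^\D\n\mc{A}\n\mc{X}_2=\mc{X}_2$. I expect the main obstacle to lie in the range identification of $\mc{A}\n\mc{X}$ (both inclusions simultaneously) and in justifying rigorously that $\mc{A}^\D\n\mc{A}$ restricts to the identity on $\mathfrak{R}(\mc{A}^k)$; once these are in place the conclusion is immediate. A purely computational alternative would mimic Theorem \ref{ThmCoreUniq}, but the $\{1^T\}$-identity $\mc{A}\n\mc{X}\n\mc{A}=\mc{A}$ is unavailable in the core-EP setting, so the projector/Drazin argument above is the cleaner path.
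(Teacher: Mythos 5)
Your proposal is correct, but it takes a genuinely different route from the paper's. The paper proves uniqueness by reduction to the core inverse: using Lemma \ref{lm4.2} it verifies that $\mc{X}^k$ and $\mc{Y}^k$ are both core inverses of $\mc{A}^k$, invokes Theorem \ref{ThmCoreUniq} to get $\mc{X}^k=\mc{Y}^k$, and then reconstructs $\mc{X}=\mc{A}^\D\n\mc{A}^k\n\mc{X}^k=\mc{A}^\D\n\mc{A}^k\n\mc{Y}^k=\mc{Y}$. (So the ``mimic Theorem \ref{ThmCoreUniq}'' route you dismissed is in fact what the paper does --- the trick that restores the missing $\{1^T\}$-identity is to pass to $\mc{A}^k$, which has index one.) You instead pin down the product $\mc{A}\n\mc{X}$ directly as the Hermitian idempotent with range $\mathfrak{R}(\mc{A}^k)$ and recover $\mc{X}=\mc{A}^\D\n(\mc{A}\n\mc{X})$; note that your reconstruction identity is literally the same as the paper's, since $\mc{A}\n\mc{X}=\mc{A}^k\n\mc{X}^k$ by Lemma \ref{lm4.2}(a) --- the two proofs differ only in how $\mc{A}\n\mc{X}_1=\mc{A}\n\mc{X}_2$ is forced. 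Your version avoids Theorem \ref{ThmCoreUniq} entirely and, as a bonus, shows that any core-EP inverse must equal $\mc{A}^\D\n P_{\mathfrak{R}(\mc{A}^k)}=\mc{A}^\D\n\mc{A}^l\n\left(\mc{A}^l\right)^\dg$, which is precisely the representation of Theorem \ref{thm4.4}; the paper's version instead yields the content of Theorem \ref{thm4.6} ($(\mc{A}^{\ep})^k=(\mc{A}^k)^{\core}$) as a byproduct. Two details should be made explicit to close your sketch. First, the assertion that two Hermitian idempotents with the same range coincide needs justification in the tensor setting; it follows from Lemma \ref{range-stan}: if $\mathfrak{R}(\mc{P})=\mathfrak{R}(\mc{Q})$ with $\mc{P},\mc{Q}$ Hermitian idempotents, then $\mc{Q}=\mc{P}\n\mc{U}$ gives $\mc{P}\n\mc{Q}=\mc{Q}$, symmetrically $\mc{Q}\n\mc{P}=\mc{P}$, and hence $\mc{P}=\mc{P}^*=(\mc{Q}\n\mc{P})^*=\mc{P}\n\mc{Q}=\mc{Q}$. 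Second, the obstacle you flag about $\mc{A}^\D\n\mc{A}$ acting as the identity on $\mathfrak{R}(\mc{A}^k)$ evaporates if you take $m=k$ in $\mc{X}=\mc{A}^m\n\mc{X}^{m+1}$: then $\mc{A}^\D\n\mc{A}\n\mc{X}=\mc{A}^\D\n\mc{A}^{k+1}\n\mc{X}^{k+1}=\mc{A}^k\n\mc{X}^{k+1}=\mc{X}$, using only $\mc{A}^\D\n\mc{A}^{k+1}=\mc{A}^k$, with no appeal to range stabilization. Finally, your argument presupposes the existence and uniqueness of the tensor Drazin inverse from \cite{Ji18}; this is legitimate, and indeed the paper itself relies on the same uniqueness when it identifies $\mc{X}^{m+1}\n\mc{A}^m$ and $\mc{Y}^{m+1}\n\mc{A}^m$ with $\mc{A}^\D$ via Lemma \ref{lm4.2}(d).
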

 \begin{proof}
 Let $\mc{X}$ and $\mc{Y}$ be two distinct core-EP inverses of $\mc{A}.$
Then\\
$\mc{X}\n\mc{A}^{k+1} = \mc{A}^k$, $\mc{A}\n\mc{X}^{2} = \mc{X}$, $ (\mc{A}\n\mc{X})^* = \mc{A}\n\mc{X}$ \\
and \\
$\mc{Y}\n\mc{A}^{k+1} = \mc{A}^k$, $\mc{A}\n\mc{Y}^{2} = \mc{Y}$, $ (\mc{A}\n\mc{Y})^* = \mc{A}\n\mc{Y}.$ \\
By Lemma \ref{lm4.2}(d),   $\mc{X}^{m+1}\n\mc{A}^m = \mc{A}^\D$ and   $\mc{Y}^{m+1}\n\mc{A}^m = \mc{A}^\D$ for any $m\geq k.$
Again by Lemma \ref{lm4.2}(c), we have
$\mc{A}^{k}\n\mc{X}^{k}\n\mc{A}^{k} = \mc{A}^{k}.$
Now

$$
\aligned
(\mc{A}^{k}\n\mc{X}^{k})^* &= (\mc{A}\n\mc{X})^*  =\mc{A}\n\mc{X} = \mc{A}^{k}\n\mc{X}^{k},
\\
\mc{X}^{k}\n(\mc{A}^{k})^2 &= \mc{X}^{k}\n\mc{A}^{k}\n\mc{A}^{k}\\
 &= \mc{X}^{k}\n\mc{X}\n\mc{A}^{k+1}\n\mc{A}^{k}= \mc{X}^{k+1}\n\mc{A}^{k+1}\n\mc{A}^{k} \\
 &= \mc{X}^{k+1}\n\mc{A}^{k}\n\mc{A}\n\mc{A}^{k} \\
 &= \mc{A}^\D\n\mc{A}\n\mc{A}^{k} =\mc{A}^{k},
\\
\mc{A}^{k}\n(\mc{X}^{k})^2 &=\mc{A}^{k}\n\mc{X}^{k}\n\mc{X}^{k} = \mc{A}\n\mc{X}^{2}\n\mc{X}^{k-1} = \mc{X}\n\mc{X}^{k-1} = \mc{X}^{k},
\endaligned
$$
and
 $(\mc{A}^{k}\n\mc{X}^{k})^* = (\mc{A}\n\mc{X})^* = \mc{A}\n\mc{X} = \mc{A}^{k}\n\mc{X}^{k}. $ This yield $\mc{X}^k$ is the core inverse of $\mc{A}^k.$ Similarly, we can show that $\mc{Y}^k$ is also the core inverse of $\mc{A}^k$.
 From the uniqueness of the core inverse, it follows $\mc{X}^k=\mc{Y}^k.$
 Now
 $$
 \aligned
 \mc{X} &= \mc{X}\n\mc{A}\n\mc{X} = \mc{X}\n(\mc{A}^{k}\n\mc{X}^{k}) \\
 &= \mc{X}\n\mc{X}\n\mc{A}^{k+1}\n\mc{X}^{k}  = \mc{X}^2\n\mc{A}^{k+1}\n\mc{X}^{k}.
  \endaligned
  $$
 Continuing in the same way, we obtain
$$
 \aligned
 \mc{X} &= \mc{X}^{k+1}\n\mc{A}^{2k}\n\mc{X}^{k} = \mc{X}^{k+1}\n\mc{A}^{k}\n\mc{A}^{k}\n\mc{X}^{k} \\
 &= \mc{A}^\D\n\mc{A}^{k}\n\mc{X}^{k} = \mc{A}^\D\n\mc{A}^{k}\n\mc{Y}^{k} \\
 &=  \mc{Y}^{k+1}\n\mc{A}^{k}\n\mc{A}^{k}\n\mc{Y}^{k} = \mc{Y}^{k+1}\n\mc{A}^{2k}\n\mc{Y}^{k} = \mc{Y}.
\endaligned
 $$
 Hence, the core-EP inverse is unique.
 \end{proof}

The existence of the core-EP inverse and its computation  can be discussed through other generalized inverses.
Theorem \ref{thm4.4} gives some characterizations of the core inverse in that direction. 
\begin{theorem}\label{thm4.4}
 Let $\mc{A}\in\mathbb{C}^{I(N)\times I(N)}$ and $\ind{\mc{A}}=k.$
 Then $\mc{A}^{\ep} = \mc{A}^\D\n\mc{A}^{l}\n\left(\mc{A}^l\right)^{\dg},$ where $l$ is an integer satisfying $l\geq k.$
Furthermore, $\mc{A}\n\mc{A}^{\ep}=\mc{A}^{l}\n\left(\mc{A}^l\right)^{\dg}.$
 \end{theorem}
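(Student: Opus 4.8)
The plan is to exhibit the tensor $\mc{X}:=\mc{A}^\D\n\mc{A}^l\n(\mc{A}^l)^\dg$ and verify directly that it fulfils the three defining equations $(EP)$, $(C2)$, $(3^T)$ of Definition \ref{epdef}; uniqueness (Theorem \ref{thm4.3}) then forces $\mc{X}=\mc{A}^{\ep}$. The whole argument rests on a small set of Drazin-inverse identities that I would record first: the commutativity $\mc{A}^\D\n\mc{A}=\mc{A}\n\mc{A}^\D$, the absorption relation $\mc{A}^\D\n\mc{A}^{l+1}=\mc{A}^l$ valid for every $l\geq k$ (obtained from $((1^k)^T)$ together with commutativity), and the range inclusion $\mathfrak{R}(\mc{A}^\D)\subseteq\mathfrak{R}(\mc{A}^l)$, which follows by iterating $\mc{A}^\D=\mc{A}^\D\n\mc{A}\n\mc{A}^\D$ to write $\mc{A}^\D=\mc{A}^l\n(\mc{A}^\D)^{l+1}$. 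I would also note the stabilization of ranges $\mathfrak{R}(\mc{A}^{k+1})=\mathfrak{R}(\mc{A}^k)=\mathfrak{R}(\mc{A}^l)$ for $l\geq k$, which is immediate from $\ind{\mc{A}}=k$.

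The first step is the Hermitian condition $(3^T)$, which simultaneously delivers the ``Furthermore'' claim. Using commutativity and absorption, $\mc{A}\n\mc{X}=\mc{A}\n\mc{A}^\D\n\mc{A}^l\n(\mc{A}^l)^\dg=\mc{A}^\D\n\mc{A}^{l+1}\n(\mc{A}^l)^\dg=\mc{A}^l\n(\mc{A}^l)^\dg$. Since $\mc{A}^l\n(\mc{A}^l)^\dg=P_{\mathfrak{R}(\mc{A}^l)}$ is an orthogonal projector, it is Hermitian, so $(\mc{A}\n\mc{X})^*=\mc{A}\n\mc{X}$; this same identity is exactly the asserted $\mc{A}\n\mc{A}^{\ep}=\mc{A}^l\n(\mc{A}^l)^\dg$.

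For $(EP)$, I would use $\mathfrak{R}(\mc{A}^{k+1})=\mathfrak{R}(\mc{A}^l)$ and Lemma \ref{range-stan} to write $\mc{A}^{k+1}=\mc{A}^l\n\mc{U}$, so that the projector absorbs it: $\mc{A}^l\n(\mc{A}^l)^\dg\n\mc{A}^{k+1}=\mc{A}^l\n(\mc{A}^l)^\dg\n\mc{A}^l\n\mc{U}=\mc{A}^l\n\mc{U}=\mc{A}^{k+1}$, whence $\mc{X}\n\mc{A}^{k+1}=\mc{A}^\D\n\mc{A}^{k+1}=\mc{A}^k$. For $(C2)$, I would combine the computation $\mc{A}\n\mc{X}=\mc{A}^l\n(\mc{A}^l)^\dg$ with the range inclusion $\mathfrak{R}(\mc{A}^\D)\subseteq\mathfrak{R}(\mc{A}^l)$: writing $\mc{A}^\D=\mc{A}^l\n\mc{V}$ via Lemma \ref{range-stan} gives $\mc{A}^l\n(\mc{A}^l)^\dg\n\mc{A}^\D=\mc{A}^\D$, and therefore $\mc{A}\n\mc{X}^2=(\mc{A}\n\mc{X})\n\mc{X}=\mc{A}^l\n(\mc{A}^l)^\dg\n\mc{A}^\D\n\mc{A}^l\n(\mc{A}^l)^\dg=\mc{A}^\D\n\mc{A}^l\n(\mc{A}^l)^\dg=\mc{X}$.

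The only genuinely delicate point is the bookkeeping with the projector $\mc{A}^l\n(\mc{A}^l)^\dg$: every simplification hinges on the two range facts $\mathfrak{R}(\mc{A}^{k+1})=\mathfrak{R}(\mc{A}^l)$ and $\mathfrak{R}(\mc{A}^\D)\subseteq\mathfrak{R}(\mc{A}^l)$, so the main obstacle is establishing these cleanly in the tensor setting. I would do so entirely inside the Einstein-product algebra, via the identities $\mc{A}^\D=\mc{A}^l\n(\mc{A}^\D)^{l+1}$ and $\mc{A}^{k+1}=\mc{A}^l\n\mc{U}$ together with Lemma \ref{range-stan}, thereby avoiding any appeal to matricization.
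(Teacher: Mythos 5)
Your proposal is correct and follows essentially the same route as the paper: both exhibit the candidate $\mc{X}=\mc{A}^\D\n\mc{A}^{l}\n\left(\mc{A}^l\right)^{\dg}$ and verify $(EP)$, $(C2)$, $(3^T)$ directly, with the ``Furthermore'' claim falling out of the computation $\mc{A}\n\mc{X}=\mc{A}^{l}\n\left(\mc{A}^l\right)^{\dg}$. The only cosmetic difference is that where you justify the projector absorptions $\mc{A}^{l}\n\left(\mc{A}^l\right)^{\dg}\n\mc{A}^{k+1}=\mc{A}^{k+1}$ and $\mc{A}^{l}\n\left(\mc{A}^l\right)^{\dg}\n\mc{A}^\D=\mc{A}^\D$ via range inclusions and Lemma \ref{range-stan}, the paper achieves the same absorptions by explicitly substituting Drazin-power factorizations such as $\mc{A}^{k+1}=\mc{A}^{l+1}\n\left(\mc{A}^\D\right)^{l-k}$ and invoking $\mc{A}^{l}\n\left(\mc{A}^l\right)^{\dg}\n\mc{A}^{l}=\mc{A}^{l}$.
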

  \begin{proof}
It is necessary to verify that $\mc{X} := \mc{A}^\D\n\mc{A}^{l}\n\left(\mc{A}^l\right)^{\dg}$ satisfies $(EP)$, $(C2)$ and $(3^T)$.
The equation $(EP)$ can be verified using
$$
  \aligned
  \mc{X}\n\mc{A}^{k+1} &=\mc{A}^\D\n\mc{A}^{l}\n\left(\mc{A}^l\right)^{\dg}\n\mc{A}^{k+1}\\
  &=\mc{A}^\D\n\mc{A}^{l}\n\left(\mc{A}^l\right)^{\dg}\n\mc{A}^{l}\n\left(\mc{A}^\D\right)^{l-k}\\
  &=\mc{A}^\D\n\mc{A}^{l+1}\n\left(\mc{A}^\D\right)^{l-k}\\
  &=\mc{A}^l\n\left(\mc{A}^\D\right)^{l-k}=\mc{A}^{k},
  \endaligned
  $$
 Further, $(C2)$ follows from
  $$
  \aligned
  \mc{A}\n\mc{X}^{2} &= \mc{A}\n\mc{A}^\D\n\mc{A}^{l}\n\mc({A}^{l})^{\dg}\n\mc{A}^\D\n\mc{A}^{l}\n\mc({A}^{l})^{\dg}\\
  &=\mc{A}^\D\n\mc{A}\n\mc{A}^{l}\n\mc({A}^{l})^{\dg}\n\mc{A}^{l}\n\mc{A}^\D\n\mc({A}^{l})^{\dg}\\
  &=\mc{A}^\D\n\mc{A}\n\mc{A}^{l}\n\mc{A}^\D\n\mc({A}^{l})^{\dg}\\
  &=\mc{A}^\D\n\mc{A}\n\mc{A}^\D\n\mc{A}^{l}\n\mc({A}^{l})^{\dg}\\
  &=\mc{A}^\D\n\mc{A}^{l}\n\mc{A}^{\dg}=\mc{X}.
  \endaligned
  $$
  The equation $(3^T)$ is verified as
  $$
  \aligned
  (\mc{A}\n\mc{X})^*   &= (\mc{A}\n\mc{A}^\D\n\mc{A}^{l}\n\mc({A}^{l})^{\dg})^*= (\mc{A}^\D\n\mc{A}\n\mc{A}^{l}\n\mc({A}^{l})^{\dg})^* \\
  &=  (\mc{A}^\D\n\mc{A}^{l}\n\mc{A}\n\mc({A}^{l})^{\dg})^*=  (\mc{A}^{l}\n\mc{A}^\D\n\mc{A}\n\mc({A}^{l})^{\dg})^*\\
  &= (\mc{A}^l\n\mc({A}^{l})^{\dg})^*\\
  &=\mc{A}^l\n\mc({A}^{l})^{\dg}=\mc{A}\n\mc{X}.
  \endaligned
  $$
 So, $\mc{X}$ is the core-EP inverse of $\mc{A}$.
    \end{proof}

The representations obtained in Corollary \ref{CorEPCore} generalizes \cite[Corollary 3.3]{prasad}.

	\begin{corollary}\label{CorEPCore}
		Let $\mc{A}\in\mathbb{C}^{I(N)\times I(N)}$ and $\ind{\mc{A}}=k.$
		Then
		\begin{equation}\label{EquEPCore}
		\mc{A}^{\ep} = \mc{A}^l\n \left(\mc{A}^{l+1}\right)^{\core}= \mc{A}^l\n \left(\mc{A}^{l+1}\right)^{\dg},
		\end{equation}
		where $l$ is an integer satisfying $l\geq k.$
	\end{corollary}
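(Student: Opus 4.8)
The plan is to exploit that $\mc{A}^{l+1}$ is itself a core tensor and to reduce both claimed formulas to the representation $\mc{A}^{\ep}=\mc{A}^\D\n\mc{A}^l\n(\mc{A}^l)^\dg$ already obtained in Theorem \ref{thm4.4}. First I would record the structural facts that do all the work. Since $l\ge k=\ind{\mc{A}}$, the range chain stabilises, so $\mathfrak{R}(\mc{A}^l)=\mathfrak{R}(\mc{A}^{l+1})$; consequently $\rra{\mc{A}^{l+1}}=\rra{(\mc{A}^{l+1})^2}$, so $\mc{A}^{l+1}$ is a core tensor and $(\mc{A}^{l+1})^{\core}$ is well defined, and the two orthogonal projections coincide, $\mc{A}^{l+1}\n(\mc{A}^{l+1})^\dg=\mc{A}^l\n(\mc{A}^l)^\dg$. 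Secondly, from the commuting relation $\mc{A}\n\mc{A}^\D=\mc{A}^\D\n\mc{A}$ and the outer--inverse equation $\mc{A}^\D\n\mc{A}\n\mc{A}^\D=\mc{A}^\D$ in the definition of the Drazin inverse, together with $l\ge k$, one gets $\mc{A}^\D\n\mc{A}^{l+1}=\mc{A}^l$ and $(\mc{A}^{l+1})^{\#}=(\mc{A}^\D)^{l+1}$, the latter by verifying the three group--inverse equations for $\mc{A}^{l+1}$ and invoking uniqueness of the group inverse.

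The computational heart is the identity $\mc{A}^l\n\mc{A}^{l+1}\n(\mc{A}^{l+1})^{\#}=\mc{A}^l$. Indeed $\mc{A}^{l+1}\n(\mc{A}^\D)^{l+1}=(\mc{A}\n\mc{A}^\D)^{l+1}=\mc{A}\n\mc{A}^\D$, because $\mc{A}\n\mc{A}^\D$ is idempotent, and hence $\mc{A}^l\n\mc{A}\n\mc{A}^\D=\mc{A}^{l+1}\n\mc{A}^\D=\mc{A}^l$. The same ingredients give $\mc{A}^l\n(\mc{A}^\D)^{l+1}=\mc{A}^l\n(\mc{A}^\D)^l\n\mc{A}^\D=(\mc{A}\n\mc{A}^\D)\n\mc{A}^\D=\mc{A}\n(\mc{A}^\D)^2=\mc{A}^\D$, using the commuting relation once more.

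With these in hand I would apply Theorem \ref{thm3.1}(a) to the core tensor $\mc{A}^{l+1}$ in its two equivalent forms. Writing $(\mc{A}^{l+1})^{\core}=\mc{A}^{l+1}\n(\mc{A}^{l+1})^{\#}\n(\mc{A}^{l+1})^\dg$ and left--multiplying by $\mc{A}^l$, the heart identity collapses the first two factors to $\mc{A}^l$, so $\mc{A}^l\n(\mc{A}^{l+1})^{\core}=\mc{A}^l\n(\mc{A}^{l+1})^\dg$, which is the second equality in \eqref{EquEPCore}. For the first equality I would instead use $(\mc{A}^{l+1})^{\core}=(\mc{A}^{l+1})^{\#}\n\mc{A}^{l+1}\n(\mc{A}^{l+1})^\dg=(\mc{A}^\D)^{l+1}\n\mc{A}^{l+1}\n(\mc{A}^{l+1})^\dg$; left--multiplying by $\mc{A}^l$, then invoking $\mc{A}^l\n(\mc{A}^\D)^{l+1}=\mc{A}^\D$ and $\mc{A}^{l+1}\n(\mc{A}^{l+1})^\dg=\mc{A}^l\n(\mc{A}^l)^\dg$, yields $\mc{A}^l\n(\mc{A}^{l+1})^{\core}=\mc{A}^\D\n\mc{A}^l\n(\mc{A}^l)^\dg=\mc{A}^{\ep}$ by Theorem \ref{thm4.4}. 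Chaining the two identifications proves \eqref{EquEPCore}.

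The only nonroutine ingredients are the structural facts in the first paragraph, namely the stabilisation $\mathfrak{R}(\mc{A}^l)=\mathfrak{R}(\mc{A}^{l+1})$ and the power identities $(\mc{A}^{l+1})^{\#}=(\mc{A}^\D)^{l+1}$ and $\mc{A}^\D\n\mc{A}^{l+1}=\mc{A}^l$, which transplant the classical matrix facts to the Einstein--product setting; once they are available, everything else reduces to bookkeeping with associativity of $\n$ and properties already proved. An alternative, more self-contained route would set $\mc{X}:=\mc{A}^l\n(\mc{A}^{l+1})^\dg$ and verify directly that it satisfies $(EP)$, $(C2)$ and $(3^T)$ of Definition \ref{epdef}, concluding by the uniqueness Theorem \ref{thm4.3}; I expect the verification of $(EP)$ to be the main obstacle there, since $(\mc{A}^{l+1})^\dg\n\mc{A}^{k+1}$ does not simplify as cleanly, which is why I prefer the reduction to Theorem \ref{thm4.4}.
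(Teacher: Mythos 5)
Your proposal is correct and takes essentially the same route as the paper's proof: both reduce the claim to the representation $\mc{A}^{\ep}=\mc{A}^\D\n\mc{A}^{l}\n\left(\mc{A}^{l}\right)^\dg$ from Theorem \ref{thm4.4} by expanding $\left(\mc{A}^{l+1}\right)^{\core}$ via Theorem \ref{thm3.1}(a) and using $\left(\mc{A}^{l+1}\right)^{\#}=\left(\mc{A}^\D\right)^{l+1}$ together with the Drazin power identities $\mc{A}^l\n\left(\mc{A}^\D\right)^{l+1}=\mc{A}^\D$ and $\mc{A}^\D\n\mc{A}^{l+1}=\mc{A}^l$. The differences are only organizational---you prove the two equalities separately using the two factorizations in Theorem \ref{thm3.1}(a) and insert the projection identity $\mc{A}^{l+1}\n\left(\mc{A}^{l+1}\right)^\dg=\mc{A}^l\n\left(\mc{A}^l\right)^\dg$ where the paper simply invokes Theorem \ref{thm4.4} at exponent $l+1$, plus your added (and welcome) care in checking that $\mc{A}^{l+1}$ is indeed a core tensor.
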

	\begin{proof}
		Using known representation of the core inverse from Theorem \ref{thm3.1} in conjunction with known properties of the Drazin and the group inverse, one can verify
		$$
		\aligned
		\mc{A}^l\n \left(\mc{A}^{l+1}\right)^{\core}&=\mc{A}^l\n \left(\mc{A}^{l+1}\right)^\# \n\mc{A}^{l+1}\n \left(\mc{A}^{l+1}\right)^\dg\\
		&=\mc{A}^l\n \left(\mc{A}^\D\right)^{l+1} \n\mc{A}^{l+1}\n \left(\mc{A}^{l+1}\right)^\dg \\
		&= \mc{A}^\D\n \mc{A}^{l+1} \n \left(\mc{A}^{l+1}\right)^\dg.
		\endaligned
		$$
		Now, according to the representation of the core-EP inverse from Theorem \ref{thm4.4}, it follows that
		$ \mc{A}^l\n \left(\mc{A}^{l+1}\right)^{\core}= \mc{A}^{\ep}$.
		Finally, in view of $\mc{A}^\D\n \mc{A}^{l+1}=\mc{A}^l$, it follows that $ \mc{A}^{\ep} = \mc{A}^l\n \left(\mc{A}^{l+1}\right)^\dg$.
	\end{proof}

One can observe that the proof of Theorem \ref{thm4.4} requires only $\{1,3\}$ inverse of $\mc{A}^l.$
Hence, using similar lines, we generalize the theorem as follows:
\begin{theorem}\label{thm4.5}
 Let $\mc{A}\in\mathbb{C}^{I(N)\times I(N)}$ and ind$(\mc{A})=k.$
 Then $\mc{A}^{\ep} = \mc{A}^D\n\mc{A}^{l}\n\left(\mc{A}^l\right)^{(1,3)},$ where $l$ is an integer with $l\geq k.$
 Furthermore, $\mc{A}\n\mc{A}^{\ep}=\mc{A}^{l}\n\left(\mc{A}^l\right)^{(1,3)}.$
 \end{theorem}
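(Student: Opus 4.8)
The plan is to set $\mc{X}:=\mc{A}^\D\n\mc{A}^{l}\n\left(\mc{A}^l\right)^{(1,3)}$ and to verify that $\mc{X}$ fulfils the three defining equations $(EP)$, $(C2)$ and $(3^T)$ of Definition \ref{epdef}; the conclusion then follows at once from the uniqueness of the core-EP inverse established in Theorem \ref{thm4.3}. The guiding idea is that the computation already carried out for Theorem \ref{thm4.4} never exploits the full Moore--Penrose inverse of $\mc{A}^l$, so the weaker hypothesis suffices.

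The key observation is structural. Going through the three verifications in the proof of Theorem \ref{thm4.4}, the tensor $\left(\mc{A}^l\right)^\dg$ enters only through the two relations
$$\mc{A}^l\n\left(\mc{A}^l\right)^\dg\n\mc{A}^l=\mc{A}^l,\qquad \left(\mc{A}^l\n\left(\mc{A}^l\right)^\dg\right)^*=\mc{A}^l\n\left(\mc{A}^l\right)^\dg,$$
that is, exactly conditions $(1^T)$ and $(3^T)$ for $\mc{A}^l$; neither $(2^T)$ nor $(4^T)$ is ever invoked. Since any $\{1,3\}$-inverse $\left(\mc{A}^l\right)^{(1,3)}\in\mc{A}^l\{1^T,3^T\}$ satisfies precisely these two identities, every step of that proof transcribes verbatim once $\left(\mc{A}^l\right)^\dg$ is replaced by $\left(\mc{A}^l\right)^{(1,3)}$.

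Concretely, I would verify $(EP)$ by collapsing the sandwiched block $\mc{A}^l\n\left(\mc{A}^l\right)^{(1,3)}\n\mc{A}^l$ to $\mc{A}^l$ via the $(1^T)$-identity and then reducing the surviving powers using $\mc{A}^\D\n\mc{A}^{l+1}=\mc{A}^l$ together with the commutativity $\mc{A}\n\mc{A}^\D=\mc{A}^\D\n\mc{A}$, so that $\mc{X}\n\mc{A}^{k+1}=\mc{A}^k$. For $(C2)$ I would expand $\mc{A}\n\mc{X}^2$, push $\mc{A}^\D$ through the Drazin-commuting factors to expose a sandwiched $\mc{A}^l\n\left(\mc{A}^l\right)^{(1,3)}\n\mc{A}^l$, apply $(1^T)$ once, and recover $\mc{X}$. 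For $(3^T)$ I would use $\mc{A}\n\mc{A}^\D\n\mc{A}^l=\mc{A}^l$ to reduce $\mc{A}\n\mc{X}$ to $\mc{A}^l\n\left(\mc{A}^l\right)^{(1,3)}$, which is Hermitian by the $(3^T)$-identity; this same reduction simultaneously establishes the ``furthermore'' assertion $\mc{A}\n\mc{A}^{\ep}=\mc{A}^l\n\left(\mc{A}^l\right)^{(1,3)}$.

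The only point requiring genuine care is the Drazin-inverse bookkeeping: one must confirm that every occurrence of $\left(\mc{A}^l\right)^\dg$ in Theorem \ref{thm4.4} indeed sits either sandwiched as $\mc{A}^l\n(\cdot)\n\mc{A}^l$ or immediately to the right of $\mc{A}^l$ inside a conjugate transpose, so that $(1^T)$ and $(3^T)$ alone are enough, and that the exponent $l-k$ attached to $\mc{A}^\D$ is tracked correctly for all $l\ge k$. Because these manipulations are identical to those already validated in Theorem \ref{thm4.4}, I expect no further obstacle, and the uniqueness in Theorem \ref{thm4.3} then yields $\mc{A}^{\ep}=\mc{X}$.
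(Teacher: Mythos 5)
Your proposal is correct and is essentially the paper's own argument: the paper proves this theorem precisely by remarking that the verification of $(EP)$, $(C2)$ and $(3^T)$ in Theorem \ref{thm4.4} uses $\left(\mc{A}^l\right)^\dg$ only through the two identities $\mc{A}^l\n\left(\mc{A}^l\right)^\dg\n\mc{A}^l=\mc{A}^l$ and $\left(\mc{A}^l\n\left(\mc{A}^l\right)^\dg\right)^*=\mc{A}^l\n\left(\mc{A}^l\right)^\dg$, so the same computation, followed by the uniqueness result of Theorem \ref{thm4.3}, goes through verbatim for any $\left(\mc{A}^l\right)^{(1,3)}$. Your explicit bookkeeping of where $\left(\mc{A}^l\right)^\dg$ appears (sandwiched between copies of $\mc{A}^l$, or in the Hermitian product $\mc{A}^l\n\left(\mc{A}^l\right)^\dg$) simply spells out the observation the paper states without detail.
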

The core-EP inverse can be computed through the core inverse.
The next result is in this direction.
\begin{theorem}\label{thm4.6}
    Let $\mc{A}\in\mathbb{C}^{I(N)\times I(N)}$ and ind$(\mc{A})=m.$
    Then $\mc{A}$ is core-EP invertible if and only if $\mc{A}^{m}$ is core invertible.
    Furthermore,  $\left(\mc{A}^{m}\right)^{\core} = \left(\mc{A}^{\ep}\right)^{m}$  and $\mc{A}^{\ep} = \mc{A}^{m-1}\n\left(\mc{A}^{m}\right)^{\core}$
\end{theorem}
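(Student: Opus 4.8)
The plan is to prove the biconditional through its two implications, extracting the two displayed identities as byproducts. Write $\mc{B}:=\mc{A}^m$; since $\ind{\mc{A}}=m$ we have $\mathfrak{R}(\mc{A}^j)=\mathfrak{R}(\mc{A}^m)$ for all $j\geq m$, so $\mathfrak{R}(\mc{B}^2)=\mathfrak{R}(\mc{A}^{2m})=\mathfrak{R}(\mc{A}^m)=\mathfrak{R}(\mc{B})$, i.e. $\ind{\mc{B}}=1$ and $\mc{B}$ is a core tensor to which the whole of Section \ref{SecCore} applies.

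For the implication that core-EP invertibility of $\mc{A}$ yields core invertibility of $\mc{A}^m$, I would reuse the computation already performed in the uniqueness proof of Theorem \ref{thm4.3}. Putting $\mc{X}:=\mc{A}^{\ep}$ and specializing $k=m$, that argument shows $\mc{X}^m\n(\mc{A}^m)^2=\mc{A}^m$, $\mc{A}^m\n(\mc{X}^m)^2=\mc{X}^m$ and $(\mc{A}^m\n\mc{X}^m)^*=\mc{A}^m\n\mc{X}^m$, so $\mc{X}^m$ obeys $(C1)$, $(C2)$, $(3^T)$ relative to $\mc{A}^m$. Hence $\mc{A}^m$ is core invertible and, by the uniqueness from Theorem \ref{ThmCoreUniq}, $\left(\mc{A}^m\right)^{\core}=\left(\mc{A}^{\ep}\right)^m$, which is the first identity.

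For the converse I would assume $\mc{A}^m$ core invertible and exhibit the core-EP inverse directly via the candidate $\mc{X}:=\mc{A}^{m-1}\n\left(\mc{A}^m\right)^{\core}$. The key step is to recognize this candidate as the tensor constructed in Theorem \ref{thm4.4}. Theorem \ref{thm3.1}(a) applied to the core tensor $\mc{A}^m$ gives $\left(\mc{A}^m\right)^{\core}=\left(\mc{A}^m\right)^{\#}\n\mc{A}^m\n\left(\mc{A}^m\right)^{\dg}$; combining this with the standard relation $\left(\mc{A}^m\right)^{\#}=\left(\mc{A}^\D\right)^m$ and the power-commutation identity $\mc{A}^{m-1}\n\left(\mc{A}^\D\right)^m=\mc{A}^\D$ (which follows from $\mc{A}\n\mc{A}^\D=\mc{A}^\D\n\mc{A}$, the idempotency of $\mc{A}\n\mc{A}^\D$, and $\mc{A}\n\left(\mc{A}^\D\right)^2=\mc{A}^\D$) collapses the candidate to
\[
\mc{X}=\mc{A}^\D\n\mc{A}^m\n\left(\mc{A}^m\right)^{\dg}.
\]
By Theorem \ref{thm4.4} with $l=m$ this tensor satisfies $(EP)$, $(C2)$, $(3^T)$ and therefore equals $\mc{A}^{\ep}$; this at once proves that $\mc{A}$ is core-EP invertible and establishes the second identity $\mc{A}^{\ep}=\mc{A}^{m-1}\n\left(\mc{A}^m\right)^{\core}$.

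The main obstacle is this reverse-direction reduction, and in particular justifying $\left(\mc{A}^m\right)^{\#}=\left(\mc{A}^\D\right)^m$, which rests squarely on the index hypothesis $\ind{\mc{A}}=m$. If one prefers to avoid it, the three defining equations can be checked directly for $\mc{X}=\mc{A}^{m-1}\n\left(\mc{A}^m\right)^{\core}$: condition $(3^T)$ is immediate because $\mc{A}\n\mc{X}=\mc{A}^m\n\left(\mc{A}^m\right)^{\core}=\mc{A}^m\n\left(\mc{A}^m\right)^{\dg}$ is Hermitian; condition $(C2)$ follows once $\mathfrak{R}(\mc{X})\subseteq\mathfrak{R}(\mc{A}^m)$ is shown, for which one writes $\left(\mc{A}^m\right)^{\core}=\mc{A}^m\n\left(\left(\mc{A}^m\right)^{\core}\right)^2$ via $(C2)$ for $\mc{A}^m$, so that $\mc{X}=\mc{A}^{2m-1}\n\left(\left(\mc{A}^m\right)^{\core}\right)^2$ and Lemma \ref{range-stan} gives $\mathfrak{R}(\mc{X})\subseteq\mathfrak{R}(\mc{A}^{2m-1})=\mathfrak{R}(\mc{A}^m)$, whence $\mc{A}\n\mc{X}^2=\mc{A}^m\n\left(\mc{A}^m\right)^{\dg}\n\mc{X}=\mc{X}$; and condition $(EP)$ reduces, through Theorem \ref{thm3.1}(g), to $\mc{A}^{m-1}\n\left(\mc{A}^m\right)^{\#}\n\mc{A}^{m+1}=\mc{A}^m$, which again uses that $\left(\mc{A}^m\right)^{\#}$ commutes with $\mc{A}$. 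Either route leaves only routine Drazin, group and Moore-Penrose manipulations.
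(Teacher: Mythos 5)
Your proposal is correct. The forward implication is essentially the paper's own argument: the paper re-runs, via Lemma \ref{lm4.2}, exactly the computation you cite from the uniqueness proof of Theorem \ref{thm4.3}, concluding that $\left(\mc{A}^{\ep}\right)^m$ satisfies $(C1)$, $(C2)$, $(3^T)$ relative to $\mc{A}^m$, whence $\left(\mc{A}^m\right)^{\core}=\left(\mc{A}^{\ep}\right)^m$ by uniqueness. Where you genuinely diverge is the converse. The paper verifies the three defining equations $(EP)$, $(C2)$, $(3^T)$ for $\mc{X}=\mc{A}^{m-1}\n\left(\mc{A}^m\right)^{\core}$ head-on, substituting the representation $\left(\mc{A}^m\right)^{\core}=\left(\mc{A}^m\right)^{\#}\n\mc{A}^m\n\left(\mc{A}^m\right)^{\dg}$ from Theorem \ref{thm3.1}(a) and manipulating group and Moore--Penrose identities. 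You instead collapse the same candidate to $\mc{A}^\D\n\mc{A}^m\n\left(\mc{A}^m\right)^{\dg}$ through $\left(\mc{A}^m\right)^{\#}=\left(\mc{A}^\D\right)^m$ and $\mc{A}^{m-1}\n\left(\mc{A}^\D\right)^m=\mc{A}^\D$, then invoke Theorem \ref{thm4.4} with $l=m$; this is precisely the mechanism the paper itself uses to prove Corollary \ref{CorEPCore}, transplanted here to the exponent $m-1$, which that corollary's hypothesis $l\geq k$ does not cover. Your route buys brevity and reuse (no re-verification of the axioms, since Theorem \ref{thm4.4} already did that work), at the price of leaning on the Drazin--group power identities; but these are exactly the ``known properties'' the paper invokes in Corollary \ref{CorEPCore}, and the paper's own direct verification tacitly uses the same commutation facts in the step $\mc{A}^{m-1}\n\left(\mc{A}^m\right)^{\#}\n\mc{A}^{m+1}=\mc{A}^m$, so the level of rigor matches. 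Your fallback direct verification — Theorem \ref{thm3.1}(b) for $(3^T)$, the range inclusion via Lemma \ref{range-stan} for $(C2)$, and Theorem \ref{thm3.1}(g) for $(EP)$ — is also sound, and is in fact tidier than the paper's computation, whose $(EP)$ and $(C2)$ chains contain minor typographical slips (e.g., a terminal $\mc{A}$ where $\mc{A}^m$ is meant).
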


\begin{proof}
  Let $\left(\mc{A}\right)^{\ep}=\mc{X}$ and $\mc{Y}=\mc{X}^{m}.$ Now by using Lemma \ref{lm4.2}, we obtain:
  \begin{equation*}
  \aligned
  \mc{Y}\n(\mc{A}^{m})^2 &= \mc{X}^{m}\n(\mc{A}^{m})^2 = (\mc{X}^{m}\n\mc{A}^{m})\n\mc{A} = (\mc{X}^{m+1}\n\mc{A}^{m+1})\n\mc{A}=(\mc{X}^{m+1}\n\mc{A}^{m})\n\mc{A}^{m}\\
  &= \mc{A}^D\n\mc{A}^{m+1}= \mc{A}^{m+1} =\mc{A},\\
  \mc{A}^m\n\mc{Y}^2&=\mc{A}^m\n\left(\mc{X}^m\right)^2=\left(\mc{A}^m\n\mc{X}^m\right)\n\mc{X}^m=\mc{A}\n\mc{X}\n\mc{X}^m=\mc{X}^m=\mc{Y}, \mbox{ and }\\
  \left(\mc{A}^m\n\mc{Y}\right)^*&=\left(\mc{A}^m\n\mc{X}^m\right)^*=\left(\mc{A}\n\mc{X}\right)^*=\mc{A}\n\mc{X}=\mc{A}^m\n\mc{X}^m=\mc{A}^m\n\mc{Y}.
\endaligned
  \end{equation*}
  Therefore, $\mc{A}^m$ is core invertible and  $\left(\mc{A}^{m}\right)^{\core}= \mc{Y}= \left(\mc{A}^{\ep}\right)^{m}.$
  To show the converse part, let $\mc{A}^m$ be core invertible. So by Definition \ref{CoreDefT}, we have the following expressions
  $$(\mc{A}^{m})^{\core}\n\left(\mc{A}^{m}\right)^2 = \mc{A}^{m}, ~\mc{A}^{m}\n((\mc{A}^{m})^{\core})^2 = (\mc{A}^{m})^{\core}, \mbox{ and } \mc{A}^{m}\n(\mc{A}^{m})^{\core} = \mc{A}^{m}\n(\mc{A}^{m})^{\core}.$$
  Now consider $\mc{X}=\mc{A}^{m-1}\n(\mc{A}^{m})^{\core}.$
  Then we have
  $$
  \aligned
  & \mc{X}\n\mc{A}^{m+1} = \mc{A}^{m-1}\n(\mc{A}^{m})^{\core}\n\mc{A}^{m+1}\\
  &\quad = \mc{A}^{m-1}\n(\mc{A}^{m})^{\#}\n\mc{A}^{m}\n(\mc{A}^{m})^{\dg}\n\mc{A}^{m+1} = \mc{A}^{m-1}\n(\mc{A}^{m})^{\#}\n\mc{A}^{m+1} = \mc{A}^{m},\\
  &  \mc{A}\n\mc{X}^2 = \mc{A}\n\left(\mc{A}^{m-1}\n(\mc{A}^{m})^{\core}\right)^2 = \mc{A}^{m}\n(\mc{A}^{m})^{\core}\n\mc{A}^{m-1}\n(\mc{A}^{m})^{\core} \\
  &\quad = \mc{A}^{m}\n(\mc{A}^{m})^{\core}\n\mc{A}^{m-1}\n\mc{A}^{m}\n((\mc{A}^{m})^{\core})^2 = \mc{A}^{m}\n(\mc{A}^{m})^{\core}  = \mc{X},\\
  &(\mc{A}\n\mc{X})^* =  (\mc{A}\n\mc{A}^{m-1}\n(\mc{A}^{m})^{\core})^* =   (\mc{A}^{m}\n(\mc{A}^{m})^{\core})^* = \mc{A}^{m}\n(\mc{A}^{m})^{\core} = \mc{A}\n\mc{A}^{m-1}\n(\mc{A}^{m})^{\core} = \mc{A}\n\mc{X}.
  \endaligned
$$
Thus $\mc{X}$ is the core-EP inverse of $\mc{A}$ and hence  $\mc{A}^{\ep} = \mc{X} = \mc{A}^{m-1}\n(\mc{A}^{m})^{\core}.$
\end{proof}
The power of the core-EP inverse of a tensor and the core-EP of a tensor power is discussed in the next theorem.
\begin{theorem}\label{thm4.7}
 Let $\mc{A}\in\mathbb{C}^{I(N)\times I(N)}$ and $k$ be any positive integer.
 Then $A$ is core-EP invertible if and only if $\mc{A}^{k}$ is core-EP invertible.
 In particular,   $(\mc{A}^{k})^{\ep} = (\mc{A}^{\ep})^{k}$ and $\mc{A}^{\ep} = \mc{A}^{k-1}\n(\mc{A}^{k})^{\ep}.$
 \end{theorem}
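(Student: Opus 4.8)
The plan is to isolate the computational heart of the statement, namely the two identities $(\mc{A}^k)^{\ep}=(\mc{A}^{\ep})^k$ and $\mc{A}^{\ep}=\mc{A}^{k-1}\n(\mc{A}^k)^{\ep}$, and to deduce the equivalence from the fact that every even-order square tensor over $\mathbb{C}$ has finite index (its reshaping is a square matrix), hence is core-EP invertible by the explicit representation in Theorem \ref{thm4.4}. With existence available on both sides, I would prove the forward direction constructively: writing $\mc{X}:=\mc{A}^{\ep}$ and $p:=\ind{\mc{A}}$, I set $\mc{Y}:=\mc{X}^k$ and verify that $\mc{Y}$ fulfils $(EP)$, $(C2)$ and $(3^T)$ of Definition \ref{epdef} relative to $\mc{A}^k$; uniqueness (Theorem \ref{thm4.3}) then forces $(\mc{A}^k)^{\ep}=\mc{Y}$.

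The conditions $(3^T)$ and $(C2)$ are routine and both rest on Lemma \ref{lm4.2}(a), which supplies the key collapse $\mc{A}^k\n\mc{X}^k=\mc{A}\n\mc{X}$. From it, $(\mc{A}^k\n\mc{Y})^*=(\mc{A}\n\mc{X})^*=\mc{A}\n\mc{X}=\mc{A}^k\n\mc{Y}$ gives $(3^T)$, while $\mc{A}^k\n\mc{Y}^2=\mc{A}\n\mc{X}\n\mc{X}^k=\mc{A}\n\mc{X}^2\n\mc{X}^{k-1}=\mc{X}^k=\mc{Y}$ (using $(C2)$ for $\mc{X}$) gives $(C2)$.

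The delicate point, and the main obstacle, is $(EP)$. First I would record the generalized form $\mc{X}\n\mc{A}^{m+1}=\mc{A}^m$ valid for every $m\geq p$, obtained by multiplying the defining $(EP)$ on the right by $\mc{A}^{m-p}$. Next I would compare indices: writing $q:=\ind{\mc{A}^k}$, the nesting $\mathfrak{R}(\mc{A}^{kq})\supseteq\mathfrak{R}(\mc{A}^{kq+1})\supseteq\cdots\supseteq\mathfrak{R}(\mc{A}^{kq+k})$ together with $\mathfrak{R}(\mc{A}^{kq})=\mathfrak{R}(\mc{A}^{kq+k})$ forces $\mathfrak{R}(\mc{A}^{kq})=\mathfrak{R}(\mc{A}^{kq+1})$, whence $kq\geq p$. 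Setting $s:=kq\geq p$, I would then telescope $\mc{Y}\n(\mc{A}^k)^{q+1}=\mc{X}^k\n\mc{A}^{s+k}$ one factor at a time, each reduction $\mc{X}^{k-i}\n\mc{A}^{s+k-i}=\mc{X}^{k-i-1}\n\mc{A}^{s+k-i-1}$ being legitimate precisely because the exponent on $\mc{A}$ never drops below $p$, and terminating at $\mc{A}^{s}=(\mc{A}^k)^q$. The careful bookkeeping of the exponents, backed by the index comparison $k\cdot\ind{\mc{A}^k}\geq\ind{\mc{A}}$, is where the real work lies; everything else is formal.

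Finally, for the converse I would observe that $\mc{A}$, having finite index, is core-EP invertible by Theorem \ref{thm4.4}, so the forward part already gives $(\mc{A}^k)^{\ep}=\mc{X}^k$. The remaining formula is then the short computation $\mc{A}^{k-1}\n(\mc{A}^k)^{\ep}=\mc{A}^{k-1}\n\mc{X}^{k-1}\n\mc{X}=(\mc{A}\n\mc{X})\n\mc{X}=\mc{A}\n\mc{X}^2=\mc{X}=\mc{A}^{\ep}$, using Lemma \ref{lm4.2}(a) for $\mc{A}^{k-1}\n\mc{X}^{k-1}=\mc{A}\n\mc{X}$ and $(C2)$ for the last step, the case $k=1$ being trivial.
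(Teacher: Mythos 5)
Your proof is correct, but it diverges from the paper's own proof in instructive ways. In the forward direction you and the paper share the same skeleton: take $\mc{Y}=(\mc{A}^{\ep})^k$, verify $(EP)$, $(C2)$, $(3^T)$ for $\mc{A}^k$ using the collapse $\mc{A}^k\n(\mc{A}^{\ep})^k=\mc{A}\n\mc{A}^{\ep}$ from Lemma \ref{lm4.2}(a), and invoke uniqueness (Theorem \ref{thm4.3}). The difference is in the $(EP)$ bookkeeping: the paper verifies $\mc{X}^k\n(\mc{A}^k)^{n+1}=(\mc{A}^k)^n$ for an ad hoc exponent $n$ chosen so that $0\le kn-m<k$ and merely remarks that $\ind{\mc{A}^k}\le n$, leaving implicit the (true, but not immediate) fact that $(EP)$ at an exponent above the index yields $(EP)$ at the index itself; you instead prove $k\cdot\ind{\mc{A}^k}\ge\ind{\mc{A}}$ from the nested range chain and verify $(EP)$ at the exact index $q=\ind{\mc{A}^k}$, which closes that small gap. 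The converse is where you genuinely depart: the paper carries out a full constructive verification that $\mc{A}^{k-1}\n(\mc{A}^k)^{\ep}$ satisfies the three defining equations for $\mc{A}$ --- by far the longest computation in its proof --- whereas you observe that every even-order square tensor has finite index and is therefore core-EP invertible by the explicit representation of Theorem \ref{thm4.4}, so the ``if and only if'' is automatic, and the formula $\mc{A}^{\ep}=\mc{A}^{k-1}\n(\mc{A}^k)^{\ep}$ then drops out of the forward identity in one line via Lemma \ref{lm4.2}(a) and $(C2)$. Your route buys brevity and makes explicit that, given Theorem \ref{thm4.4}, the real content of the statement is the two formulas rather than the equivalence; the paper's route is self-contained, does not lean on the unconditional-existence reading of that theorem, and yields the index bound $\ind{\mc{A}}\le k\cdot\ind{\mc{A}^k}$ as a by-product of its construction.
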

 \begin{proof}
 Let ind$(\mc{A})=m$ and $\mc{A}^{\ep} = \mc{X}.$ Then by definition of core-EP inverse, we have $\mc{X}\n\mc{A}^{m+1} = \mc{A}^m,$ $\mc{A}\n\mc{X}^2 = \mc{X}$, $(\mc{A}\n\mc{X})^* = \mc{A}\n\mc{X}$.
 Choose an integer $n$ such that $0 \leq kn-m <k.$
 Now we have the following:
 \begin{equation*}
 \aligned
   &(\mc{A}^{k})^n= \mc{A}^{kn} = \mc{A}^{m}\n\mc{A}^{kn-m} = \mc{X}\n\mc{A}^{m+1}\n\mc{A}^{kn-m}=\mc{X}\n\mc{A}^{kn+1}=\mc{X}\n(\mc{A}^{k})^n\n\mc{A} \\
   &\qquad \ \ = \mc{X}^2\n(\mc{A}^{k})^n\n\mc{A}^2=\cdots=\mc{X}^k\n(\mc{A}^{k})^n\n\mc{A}^k=\mc{X}^k\n(\mc{A}^{k})^{n+1};\\
  &\mc{A}^{k}\n(\mc{X}^{k})^2 = (\mc{A}^{k}\n\mc{X}^{k})\n\mc{X}^{k} =  \mc{A}\n\mc{X}\n\mc{X}^{k} = (\mc{A}\n\mc{X}^2)\n\mc{X}^{k-1} =\mc{X}\n\mc{X}^{k-1} = \mc{X}^{k};\\
   &(\mc{A}^{k}\n\mc{X}^{k})^* = (\mc{A}\n\mc{X})^* = \mc{A}\n\mc{X} =  \mc{A}^{k}\n\mc{X}^{k}.
   \endaligned
  \end{equation*}
  Therefore $\mc{X}^{k}$  is the core-EP inverse of $\mc{A}^k$  and hence  $(\mc{A}^{k})^{\ep} = \mc{X}^{k} = (\mc{A}^{\ep})^{k}$ with ind$(\mc{A}^k)\leq n.$

Conversely, suppose ind$(\mc{A}^k)= n$ and    $(\mc{A}^{k})^{\ep} = \mc{Y}.$
So by the definition, we have $\mc{Y}\n(\mc{A}^{k})^{n+1} = (\mc{A}^{k})^n,$ $\mc{A}^{k}\n(\mc{Y})^2 = \mc{Y},$ and $(\mc{A}^{k}\n\mc{Y})^* = \mc{A}^{k}\n\mc{Y}$.
To claim the converse part, it is enough to show that $\mc{X} = \mc{A}^{k-1}\n\mc{Y}$ is the core-EP inverse of $\mc{A}.$
  The equation $(EP)$ is satisfied because of
   \begin{equation*}
   \aligned
   \mc{X}\n\mc{A}^{kn+1}&= \mc{A}^{k-1}\n\mc{Y}\n\mc{A}^{kn+1} = \mc{A}^{k-1}\n\mc{A}^k\n\mc{Y}^2\n\mc{A}^{kn+1} \\
  &= \mc{A}^{k-1}\n\mc{A}^k\n\mc{Y}\n\mc{Y}\n\mc{A}^{kn+1}
   =\mc{A}^{k-1}\n(\mc{A}^{2k}\n\mc{Y}\n\mc{Y}^2)\n\mc{A}^{kn+1}\\
   &=\cdots=\mc{A}^{k-1}\n(\mc{A}^{nk}\n\mc{Y}\n\mc{Y}^n)\n\mc{A}^{kn+1}\\
    &= \mc{A}^{kn+k-1}\n\mc{Y}^{n+1}\n\mc{A}^{kn}\n\mc{A} = \mc{A}^{kn+k-1}\n\mc{Y}^{n+1}\n(\mc{A}^{k})^{n}\n\mc{A}\\
    &= \mc{A}^{kn+k-1}\n(\mc{A}^{k})^D\n\mc{A}^{k}= (\mc{A}^{k})^D\n\mc{A}^{kn+k}=(\mc{A}^{k})^D\n(\mc{A}^{k})^2\n\mc{A}^{kn-k}\\
    &= \mc{A}^{k}\n\mc{A}^{kn-k} = \mc{A}^{kn},
    \endaligned
   \end{equation*}
  Also, $(C2)$ is satisfied:
   \begin{equation*}
   \aligned
  \mc{A}\n\mc{X}^2 &= \mc{A}\n\mc{A}^{k-1}\n\mc{Y}\n\mc{A}^{k-1}\n\mc{Y} = \mc{A}^{k}\n\mc{Y}\n\mc{A}^{k-1}\n\mc{Y}\\
  &  = \mc{A}^{k}\n\mc{Y}\n\mc{A}^{k-1}\n(\mc{A}^{k}\n\mc{Y}^2)\\
  &= \mc{A}^{k}\n\mc{Y}\n\mc{A}^{k-1}\n\left((\mc{A}^{k})^{n+1}\n\mc{Y}^{n+2}\right)  \\
  &= \mc{A}^{k}\n\left(\mc{Y}\n(\mc{A}^{k})^{n+1}\right)\n\mc{A}^{k-1}\n\mc{Y}^{n+2}\\
  &= \mc{A}^{k}\n\mc{A}^{kn}\n\mc{A}^{k-1}\n\mc{Y}^{n+2}=\mc{A}^{k-1}\n\left((\mc{A}^{k})^{n+1}\n\mc{Y}^{n+2}\right)\\
  & = \mc{A}^{k-1}\n\mc{Y} = \mc{X}.
  \endaligned
 \end{equation*}
Finally $(3^T)$ is verified by
 $$\left(\mc{A}\n\mc{X}\right)^*= \left(\mc{A}\n\mc{A}^{k-1}\n\mc{Y}\right)^* = \left(\mc{A}^{k}\n\mc{Y}\right)^*=
 \mc{A}^{k}\n\mc{Y} = \mc{A}\n\mc{X}.$$
 Thus, $\mc{X}$ is the core-EP inverse of $\mc{A}$ with ind$(\mc{A})\leq kn$.
 Therefore, $\mc{A}^{\ep}=\mc{X}=\mc{A}^{k-1}\n\mc{Y}=\mc{A}^{k-1}\n\left(\mc{A}^k\right)^{\ep}$.
 \end{proof}

\begin{theorem}\label{epofep}
   Let $\mc{A}\in\mathbb{C}^{I(N)\times I(N}$  with $\ind{\mc{A}}=m$. Then $(\mc{A}^{\ep})^{\ep}= \mc{A}^2\n\mc{A}^{\ep}$
   \end{theorem}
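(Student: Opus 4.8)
The plan is to write $\mc{X}:=\mc{A}^{\ep}$ and prove that the single tensor $\mc{A}^2\n\mc{X}$ is at once the group inverse and the core inverse of $\mc{X}$. Since possessing a group inverse forces $\ind{\mc{X}}=1$, the tensor $\mc{X}$ is a core tensor, so its core-EP inverse coincides with its core inverse (the remark following Definition \ref{epdef}), and the desired formula $(\mc{A}^{\ep})^{\ep}=\mc{X}^{\core}=\mc{A}^2\n\mc{X}$ drops out. Throughout I would carry the abbreviation $\mc{P}:=\mc{A}\n\mc{X}=\mc{A}\n\mc{A}^{\ep}$, which by condition $(3^T)$ satisfies $\mc{P}^*=\mc{P}$, and I would keep at hand the elementary consequences of $(C2)$, namely $\mc{A}^2\n\mc{X}^2=\mc{A}\n(\mc{A}\n\mc{X}^2)=\mc{A}\n\mc{X}=\mc{P}$ and $\mc{P}\n\mc{X}=\mc{A}\n\mc{X}^2=\mc{X}$.

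The crux of the argument, and the step I expect to be the only real obstacle, is the identity
\[
\mc{X}\n\mc{A}^2\n\mc{X}=\mc{A}\n\mc{X}=\mc{P}.
\]
This should be established algebraically rather than through any block or decomposition argument: grouping $\mc{X}\n\mc{A}^2\n\mc{X}=\mc{X}\n\mc{A}\n(\mc{A}\n\mc{X})$ and substituting $\mc{A}\n\mc{X}=\mc{A}^{k}\n\mc{X}^{k}$ from Lemma \ref{lm4.2}(a) yields $\mc{X}\n\mc{A}^{k+1}\n\mc{X}^{k}$; applying $(EP)$ collapses the front to $\mc{A}^{k}\n\mc{X}^{k}$, which is $\mc{A}\n\mc{X}$ once more by Lemma \ref{lm4.2}(a). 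Everything else is a short computation, so the whole proof hinges on getting this chain right and on exploiting $\mc{P}\n\mc{A}^2\n\mc{X}=\mc{A}\n(\mc{X}\n\mc{A}^2\n\mc{X})=\mc{A}\n\mc{P}=\mc{A}^2\n\mc{X}$.

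With the identity in hand, I would verify the group-inverse conditions for $\mc{A}^2\n\mc{X}$ relative to $\mc{X}$: $(1^T)$ follows from $\mc{X}\n\mc{A}^2\n\mc{X}\n\mc{X}=\mc{P}\n\mc{X}=\mc{X}$; $(2^T)$ from $\mc{A}^2\n\mc{X}^2\n\mc{A}^2\n\mc{X}=\mc{P}\n\mc{A}^2\n\mc{X}=\mc{A}^2\n\mc{X}$; and $(5^T)$ from the equality of $\mc{X}\n\mc{A}^2\n\mc{X}=\mc{P}$ with $\mc{A}^2\n\mc{X}^2=\mc{P}$. This certifies $\ind{\mc{X}}=1$, so that $\mc{X}^{\ep}=\mc{X}^{\core}$. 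I would then check the three defining equations of Definition \ref{CoreDefT} for $\mc{Y}:=\mc{A}^2\n\mc{X}$ as core inverse of $\mc{X}$: $(C1)$ gives $\mc{Y}\n\mc{X}^2=\mc{A}^2\n\mc{X}^3=\mc{A}\n\mc{X}^2=\mc{X}$; $(C2)$ gives $\mc{X}\n\mc{Y}^2=\mc{P}\n\mc{A}^2\n\mc{X}=\mc{Y}$; and $(3^T)$ is immediate since $\mc{X}\n\mc{Y}=\mc{X}\n\mc{A}^2\n\mc{X}=\mc{P}$ is Hermitian. By the uniqueness of the core inverse (Theorem \ref{ThmCoreUniq}), $\mc{X}^{\core}=\mc{A}^2\n\mc{X}$, whence $(\mc{A}^{\ep})^{\ep}=\mc{A}^2\n\mc{A}^{\ep}$, as claimed.
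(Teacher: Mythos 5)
Your proof is correct, but it takes a genuinely different route from the paper's. The paper verifies directly that $\mc{Y}=\mc{A}^2\n\mc{A}^{\ep}$ satisfies the three core-EP equations with respect to $\mc{X}=\mc{A}^{\ep}$ --- checking $(EP)$ at the exponent $m=\ind{\mc{A}}$ through long telescoping chains such as $\mc{X}\n\mc{Y}^2=\mc{X}\n\mc{A}^2\n\mc{X}\n\mc{A}^3\n\mc{X}^2=\cdots=\mc{X}\n\mc{A}^2\n\mc{X}\n\mc{A}^{m+1}\n\mc{X}^{m}$ --- and then concludes by uniqueness of the core-EP inverse. You instead localize all the work into the single identity $\mc{X}\n\mc{A}^2\n\mc{X}=\mc{A}\n\mc{X}$ (derived by the same mechanism the paper uses inline, namely Lemma \ref{lm4.2}(a) combined with $(EP)$), use it to show that $\mc{A}^2\n\mc{X}$ is a group inverse of $\mc{X}$, deduce $\ind{\mc{X}}=1$, and then check the core-inverse axioms of Definition \ref{CoreDefT}, identifying $\mc{X}^{\ep}$ with $\mc{X}^{\core}$ via the remark after Definition \ref{epdef}. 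Your route buys two things: the index bookkeeping is cleaner, since Definition \ref{epdef} asks for $(EP)$ at the exponent $\ind{\mc{X}}=1$, i.e.\ $\mc{Y}\n\mc{X}^2=\mc{X}$, which is exactly what you verify, whereas the paper checks the exponent-$m$ version inside the theorem and only records the exponent-$1$ identity and the index-$1$ claim afterwards, in passing to Corollary \ref{epcore}; and that corollary, $(\mc{A}^{\ep})^{\core}=\mc{A}^2\n\mc{A}^{\ep}=(\mc{A}^{\ep})^{\ep}$, falls out of your argument with no additional work. The one step you should not leave implicit is the assertion that possession of a group inverse forces $\ind{\mc{X}}=1$: the paper nowhere proves this, so add the one-line justification $\mc{X}=\mc{X}\n\mc{Y}\n\mc{X}=\mc{X}^2\n\mc{Y}$ (using $(5^T)$), which gives $\mathfrak{R}(\mc{X})\subseteq\mathfrak{R}(\mc{X}^2)\subseteq\mathfrak{R}(\mc{X})$ and hence index one.
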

 \begin{proof}
 Let $\mc{A}^{\ep} = \mc{X}.$ So by definition, we have $\mc{X}\n\mc{A}^{m+1} = \mc{A}^{m}$,  $\mc{A}\n\mc{X}^2 = \mc{X}, $ and  $(\mc{A}\n\mc{X})^* =  \mc{A}\n\mc{X}.$ Let us assume $\mc{Y}= \mc{A}^2\n\mc{X}.$ To claim the result, we need to show $\mc{Y}$ is core-EP inverse of $\mc{X}.$ As $$\mc{Y}\n\mc{X}^{m+1}=\mc{A}^2\n\mc{X}^{m+2}=\mc{A}\n\mc{A}\n\mc{X}^2\n\mc{X}^{m}=\mc{A}\n\mc{X}^2\n\mc{X}^{m-1}=\mc{X}^{m},$$
 \begin{eqnarray*}
 \mc{X}\n\mc{Y}^2&=&\mc{X}\n(\mc{A}^2\n\mc{X})^2 = \mc{X}\n\mc{A}^2\n\mc{X}\n\mc{A}^2\n\mc{X} = \mc{X}\n\mc{A}^2\n\mc{X}\n\mc{A}^2\n\mc{A}\n\mc{X}^2 \\
 &=&\mc{X}\n\mc{A}^2\n\mc{X}\n\mc{A}^3\n\mc{X}^2
 =\cdots=\mc{X}\n\mc{A}^2\n\mc{X}\n\mc{A}^{m+1}\n\mc{X}^{m}\\
 &=& \mc{X}\n\mc{A}^2\n\left(\mc{A}^{m}\n\mc{X}^{m}\right) = \mc{X}\n\mc{A}^2\n\left(\mc{A}\n\mc{X}\right)=\mc{X}\n\mc{A}^3\n\mc{X}\\
 &=& \mc{X}\n\mc{A}^2\n\left(\mc{A}\n\mc{X}\right)=\mc{X}\n\mc{A}^{m+1}\n\mc{X}^{m-1} = \mc{A}^{m}\n\mc{X}^{m-1} = \mc{A}^{m}\n\mc{X}\n\mc{X}^{m-2}\\
 &=& \mc{A}^{m+1}\n\mc{X}^{m} = \mc{A}\n\left(\mc{A}^{m}\n\mc{X}^{m}\right) = \mc{A}\n\mc{A}\n\mc{X} = \mc{A}^2\n\mc{X} = \mc{Y},\mbox{ and }
  \end{eqnarray*}
  \begin{eqnarray*}
  (\mc{X}\n\mc{Y})^*&=& (\mc{X}\n\mc{A}^2\n\mc{X})^* = (\mc{X}\n\mc{A}^3\n\mc{X}^2)^* = (\mc{X}\n\mc{A}^{m+1}\n\mc{X}^{m})^*=(\mc{A}^{m}\n\mc{X}^{m})^* \\
  &=& (\mc{A}\n\mc{X})^* = \mc{A}\n\mc{X} = \mc{A}^{m}\n\mc{X}^{m}=\mc{X}\n\mc{A}^{m+1}\n\mc{X}^{m}\\
  &=& \mc{X}\n\mc{A}\n\mc{A}^{m}\n\mc{X}^{m} = \mc{X}\n\mc{A}\n\mc{A}\n\mc{X} =\mc{X}\n\mc{A}^2\n\mc{X}= \mc{X}\n\mc{Y} .
  \end{eqnarray*} Therefore, $\left(\mc{A}^{\ep}\right)^{\ep}=\mc{Y}=\mc{A}^2\n\mc{X}=\mc{A}^2\n\mc{A}^{\ep}.$
 \end{proof}
 Since $\mc{A}^{\ep}$ is of index $1$ and  $\mc{Y}\n\mc{X}^{2}=\mc{A}^2\n\mc{X}^{3}=\mc{A}\n\mc{A}\n\mc{X}^2\n\mc{X}=\mc{A}\n\mc{X}^2=\mc{X},$ we conclude as a corollary.
 \begin{corollary}\label{epcore}
 Let $\mc{A}\in\mathbb{C}^{I(N)\times I(N)}$  with ind$(\mc{A})=m.$ Then $(\mc{A}^{\ep})^{\core}= \mc{A}^2\n\mc{A}^{\ep}=\left(\mc{A}^{\ep}\right)^{\ep}.$
 \end{corollary}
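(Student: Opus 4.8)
The plan is to recognize that this corollary is almost entirely a repackaging of Theorem \ref{epofep} together with the fact that the core-EP inverse $\mc{A}^{\ep}$ is itself a tensor of reshaping index one. Writing $\mc{X}:=\mc{A}^{\ep}$ and $\mc{Y}:=\mc{A}^2\n\mc{A}^{\ep}=\mc{A}^2\n\mc{X}$, Theorem \ref{epofep} already supplies $(\mc{A}^{\ep})^{\ep}=\mc{Y}$, which immediately yields the second equality $(\mc{A}^{\ep})^{\ep}=\mc{A}^2\n\mc{A}^{\ep}$ at no cost. The only genuinely new task is therefore the first equality $(\mc{A}^{\ep})^{\core}=\mc{Y}$.

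To obtain it, I would verify that $\mc{Y}$ fulfils the three defining equations $(C1)$, $(C2)$, $(3^T)$ of Definition \ref{CoreDefT} with respect to $\mc{X}$. The proof of Theorem \ref{epofep} already establishes $\mc{X}\n\mc{Y}^2=\mc{Y}$ (condition $(C2)$) and $(\mc{X}\n\mc{Y})^*=\mc{X}\n\mc{Y}$ (condition $(3^T)$), so these may be quoted directly. Hence the single additional computation is condition $(C1)$, namely $\mc{Y}\n\mc{X}^2=\mc{X}$; applying the defining relation $\mc{A}\n\mc{X}^2=\mc{X}$ of the core-EP inverse twice gives $\mc{Y}\n\mc{X}^2=\mc{A}^2\n\mc{X}^3=\mc{A}\n(\mc{A}\n\mc{X}^2)\n\mc{X}=\mc{A}\n\mc{X}^2=\mc{X}$, exactly as indicated in the paragraph preceding the statement.

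Having checked $(C1)$, $(C2)$, $(3^T)$, I would invoke the uniqueness of the core inverse (Theorem \ref{ThmCoreUniq}) to conclude $\mc{X}^{\core}=\mc{Y}$, that is, $(\mc{A}^{\ep})^{\core}=\mc{A}^2\n\mc{A}^{\ep}$. Chaining this with the second equality finishes the argument. Conceptually, the whole statement is just the identity $(\mc{A}^{\ep})^{\core}=(\mc{A}^{\ep})^{\ep}$ forced by $\ind{\mc{A}^{\ep}}=1$ together with the remark following Definition \ref{epdef} that the core and core-EP inverses coincide at index one.

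I do not expect a real obstacle here, since the substantive work was already carried out in Theorem \ref{epofep}, and the present statement only reinterprets it. The one point deserving care is to ensure that speaking of $(\mc{A}^{\ep})^{\core}$ is legitimate, i.e. that $\mc{A}^{\ep}$ is indeed a core tensor; this is guaranteed because the successful verification of $(C1)$, $(C2)$, $(3^T)$ exhibits an explicit core inverse, and by Lemma \ref{LemTen12} and Corollary \ref{prep3.1} such a witness forces $\mc{A}^{\ep}$ to have reshaping index one.
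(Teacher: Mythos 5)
Your proposal is correct and follows essentially the same route as the paper: the paper's own justification is exactly the remark preceding the corollary, which reuses $(C2)$ and $(3^T)$ from the proof of Theorem \ref{epofep} and adds the single computation $\mc{Y}\n\mc{X}^{2}=\mc{A}^2\n\mc{X}^{3}=\mc{A}\n\mc{X}^2=\mc{X}$ to settle $(C1)$, so that $\mc{Y}=\mc{A}^2\n\mc{A}^{\ep}$ is the core inverse of $\mc{A}^{\ep}$ by uniqueness. Your extra remark on why $\mc{A}^{\ep}$ is a core tensor (index one, witnessed by the verified equations) only makes explicit what the paper states in passing.
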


 \begin{corollary}
  Let $\mc{A}\in\mathbb{C}^{I(N)\times I(N)}$  with $\ind{\mc{A}}=m$.
  Then $\left((\mc{A}^{\ep})^{\ep}\right)^{\ep}= \mc{A}^{\ep}.$
 \end{corollary}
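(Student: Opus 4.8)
The plan is to reduce this triple-nested statement to the single fact that $\mc{A}^{\ep}$ is an EP tensor, after which the claim follows from how the core and core-EP inverses behave on EP tensors. Write $\mc{B}:=\mc{A}^{\ep}$. Corollary \ref{epcore} already exhibits the core inverse $(\mc{A}^{\ep})^{\core}=\mc{A}^2\n\mc{A}^{\ep}$, so $\mc{B}$ is a core tensor, i.e. $\ind{\mc{B}}=1$, and hence $\mc{B}^{\ep}=\mc{B}^{\core}$. By Theorem \ref{thm3.1}(d) the core inverse $\mc{B}^{\core}$ is itself EP, so it too has index $1$, and therefore $(\mc{B}^{\ep})^{\ep}=(\mc{B}^{\core})^{\ep}=(\mc{B}^{\core})^{\core}$. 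If I can show that $\mc{B}$ is EP, then the equivalence of (i) and (ii) in Theorem \ref{thm3.4} gives $(\mc{B}^{\core})^{\core}=\mc{B}$, whence $\left((\mc{A}^{\ep})^{\ep}\right)^{\ep}=(\mc{B}^{\ep})^{\ep}=\mc{B}=\mc{A}^{\ep}$, which is exactly the assertion.

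Thus the whole argument hinges on verifying that $\mc{B}=\mc{A}^{\ep}$ is EP, and the cleanest route is the equivalence of (i) and (iii) in Theorem \ref{thm3.4}: it suffices to prove $\mc{B}^{\core}\n\mc{B}=\mc{B}\n\mc{B}^{\core}$. Writing $\mc{X}:=\mc{A}^{\ep}$ and using $\mc{B}^{\core}=\mc{A}^2\n\mc{X}$ from Corollary \ref{epcore}, one side is immediate from $(C2)$:
\[
\mc{B}^{\core}\n\mc{B}=\mc{A}^2\n\mc{X}^2=\mc{A}\n(\mc{A}\n\mc{X}^2)=\mc{A}\n\mc{X}.
\]
For the other side the key identity to establish is $\mc{X}\n\mc{A}^2\n\mc{X}=\mc{A}\n\mc{X}$, and this is where the real work lies. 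Using Lemma \ref{lm4.2}(a) in the form $\mc{A}\n\mc{X}=\mc{A}^{m+1}\n\mc{X}^{m+1}$ together with the defining equation $(EP)$, $\mc{X}\n\mc{A}^{m+1}=\mc{A}^{m}$ (here $\ind{\mc{A}}=m$), I would compute
\[
\mc{X}\n\mc{A}^2\n\mc{X}=\mc{X}\n\mc{A}\n\mc{A}^{m+1}\n\mc{X}^{m+1}=\mc{X}\n\mc{A}^{m+2}\n\mc{X}^{m+1}=\mc{A}^{m+1}\n\mc{X}^{m+1}=\mc{A}\n\mc{X},
\]
where $\mc{X}\n\mc{A}^{m+2}=(\mc{X}\n\mc{A}^{m+1})\n\mc{A}=\mc{A}^{m+1}$ by $(EP)$ and the last equality is again Lemma \ref{lm4.2}(a). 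Hence $\mc{B}\n\mc{B}^{\core}=\mc{X}\n\mc{A}^2\n\mc{X}=\mc{A}\n\mc{X}=\mc{B}^{\core}\n\mc{B}$, so $\mc{B}=\mc{A}^{\ep}$ is EP by Theorem \ref{thm3.4}.

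I expect the main obstacle to be precisely the commutation identity $\mc{X}\n\mc{A}^{2}\n\mc{X}=\mc{A}\n\mc{X}$; everything else is bookkeeping with the index-$1$ property and the already-proved equivalences in Theorems \ref{thm3.1} and \ref{thm3.4} and Corollary \ref{epcore}. An alternative, more computational plan would bypass the EP detour and verify directly that $\mc{X}=\mc{A}^{\ep}$ satisfies $(C1)$, $(C2)$ and $(3^T)$ relative to $\mc{C}:=(\mc{A}^{\ep})^{\ep}=\mc{A}^2\n\mc{X}$ (which has index $1$), then invoke uniqueness of the core inverse, Theorem \ref{ThmCoreUniq}. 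In that route the nontrivial condition $(C1)$, namely $\mc{X}\n\mc{C}^2=\mc{C}$, again reduces to the same identity $\mc{X}\n\mc{A}^2\n\mc{X}=\mc{A}\n\mc{X}$, confirming that this is the genuine crux either way.
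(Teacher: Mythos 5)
Your proof is correct, but it follows a genuinely different route from the paper's. The paper argues by direct computation: it sets $\mc{B}=\mc{A}^{\ep}$, applies Theorem \ref{epofep} twice to get $\left((\mc{A}^{\ep})^{\ep}\right)^{\ep}=\mc{B}^2\n\mc{B}^{\ep}=\left(\mc{A}^{\ep}\right)^2\n\mc{A}^2\n\mc{A}^{\ep}$, and then collapses this expression to $\mc{A}^{\ep}$ using Lemma \ref{lm4.2}(a), the equation $(EP)$, and Lemma \ref{lm4.2}(b) --- no EP theory enters at all. You instead reduce the statement to the structural fact that $\mc{A}^{\ep}$ is an EP tensor, verified through the commutation criterion of Theorem \ref{thm3.4}(iii), and then finish with the involution $(\mc{B}^{\core})^{\core}=\mc{B}$ from Theorem \ref{thm3.4}(i)$\Leftrightarrow$(ii), together with the identifications $\mc{B}^{\ep}=\mc{B}^{\core}$ and $(\mc{B}^{\core})^{\ep}=(\mc{B}^{\core})^{\core}$ on index-one tensors. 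Notably, your crux computation $\mc{X}\n\mc{A}^2\n\mc{X}=\mc{A}\n\mc{X}$ uses exactly the same algebraic kernel (Lemma \ref{lm4.2}(a) plus $(EP)$) as the paper's collapse step, so the two proofs share their hard core; what your framing buys is the intermediate result that the core-EP inverse is always EP --- a fact of independent interest that explains the name and is not isolated anywhere in the paper --- while the paper's version is shorter and self-contained within Section \ref{SecCoreEP}. One small caveat: your step ``$\mc{B}^{\core}$ is EP, so it has index $1$'' invokes the standard implication EP $\Rightarrow$ index one, which the paper never proves for tensors; you can avoid relying on it by citing Theorem \ref{thm3.1}(e) instead, which exhibits $(\mc{B}^{\core})^{\core}=\mc{B}^2\n\mc{B}^{\dg}$ and hence shows directly that $\mc{B}^{\core}$ is core invertible. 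The same remark applies to your inference that existence of $(\mc{A}^{\ep})^{\core}$ forces $\ind{\mc{A}^{\ep}}=1$, though here you are on firmer ground since the paper itself asserts this just before Corollary \ref{epcore}.
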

\begin{proof}
   Let $\mc{B}= \mc{A}^{\ep}.$ By Theorem \ref{epofep}, we have
\begin{equation*}
\aligned
 \left((\mc{A}^{\ep})^{\ep}\right)^{\ep}&=\left(\mc{B}^{\ep}\right)^{\ep}=\mc{B}^2\n\mc{B}^{\ep}=\left(\mc{A}^{\ep}\right)^2\n\left(\mc{A}^{\ep}\right)^{\ep}=\left(\mc{A}^{\ep}\right)^2\n\left(\mc{A}^2\n\mc{A}^{\ep}\right)\\
 &=\left(\mc{A}^{\ep}\right)^2\n\mc{A}\n\left(\mc{A}\n\mc{A}^{\ep}\right)=\left(\mc{A}^{\ep}\right)^2\n\mc{A}\n\left(\mc{A}^m\n(\mc{A}^{\ep})^m\right)\\
 &=\mc{A}^{\ep}\n\mc{A}^{\ep}\n\mc{A}^{m+1}\n\n(\mc{A}^{\ep})^m=\mc{A}^{\ep}\n\mc{A}^{m}\n(\mc{A}^{\ep})^m=\mc{A}^{\ep}\n\mc{A}\n\mc{A}^{\ep}\\
 &=\mc{A}^{\ep}~~\mbox{ (by Lemma \ref{lm4.2}).}
 \endaligned
  \end{equation*}
 \end{proof}

%
%

 Next part of this section discusses additive property of the core-EP inverse.

 \begin{theorem}
    Let $\mc{A}\n\mc{B}=\mc{O}=\mc{B}\n\mc{A},$  and $\mc{A}^*\n\mc{B}=\mc{O}.$ Then $\left(\mc{A}+\mc{B}\right)^{\ep}=\mc{A}^{\ep}+\mc{B}^{\ep}.$
 \end{theorem}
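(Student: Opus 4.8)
The plan is to set $\mc{X}:=\mc{A}^{\ep}+\mc{B}^{\ep}$ and to verify that $\mc{X}$ fulfils the three defining equations $(EP)$, $(C2)$ and $(3^T)$ of the core-EP inverse of $\mc{A}+\mc{B}$; the conclusion then follows at once from the uniqueness established in Theorem~\ref{thm4.3}. The first preparatory step is to exploit $\mc{A}\n\mc{B}=\mc{B}\n\mc{A}=\mc{O}$ to show that every mixed term in the expansion of a power of $\mc{A}+\mc{B}$ vanishes, so that $(\mc{A}+\mc{B})^n=\mc{A}^n+\mc{B}^n$ for all $n\geq 1$. Writing $k:=\max\{\ind{\mc{A}},\ind{\mc{B}}\}$, this identity together with $\mc{A}\n\mc{B}=\mc{B}\n\mc{A}=\mc{O}$ gives $\ind{\mc{A}+\mc{B}}=k$, and the defining relation $(EP)$ for each summand extends from its own index to the common exponent $k$ by simple right-multiplication (for instance $\mc{A}^{\ep}\n\mc{A}^{k+1}=\mc{A}^k$).

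The crux of the proof is a set of four annihilation relations, obtained in the spirit of the proof of Theorem~\ref{crsum}:
\[
\mc{A}\n\mc{B}^{\ep}=\mc{O},\qquad \mc{B}\n\mc{A}^{\ep}=\mc{O},\qquad \mc{A}^{\ep}\n\mc{B}=\mc{O},\qquad \mc{B}^{\ep}\n\mc{A}=\mc{O}.
\]
The first two are direct: using $(C2)$ in the form $\mc{B}^{\ep}=\mc{B}\n(\mc{B}^{\ep})^2$ yields $\mc{A}\n\mc{B}^{\ep}=\mc{A}\n\mc{B}\n(\mc{B}^{\ep})^2=\mc{O}$, and $\mc{B}\n\mc{A}^{\ep}=\mc{O}$ follows symmetrically from $\mc{B}\n\mc{A}=\mc{O}$. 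I expect the last two to be the main obstacle, since they are the only place where the hypothesis $\mc{A}^*\n\mc{B}=\mc{O}$ enters. The idea is to combine Lemma~\ref{lm4.2}(b), which gives $\mc{A}^{\ep}=\mc{A}^{\ep}\n\mc{A}\n\mc{A}^{\ep}$, with the Hermitian identity $\mc{A}\n\mc{A}^{\ep}=(\mc{A}\n\mc{A}^{\ep})^*=(\mc{A}^{\ep})^*\n\mc{A}^*$ coming from $(3^T)$, so that
\[
\mc{A}^{\ep}\n\mc{B}=\mc{A}^{\ep}\n(\mc{A}^{\ep})^*\n\mc{A}^*\n\mc{B}=\mc{O};
\]
the relation $\mc{B}^{\ep}\n\mc{A}=\mc{O}$ is then obtained in the same manner after noting $\mc{B}^*\n\mc{A}=(\mc{A}^*\n\mc{B})^*=\mc{O}$.

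With these four relations available, the three verifications reduce to bookkeeping. For $(EP)$, expanding $\mc{X}\n(\mc{A}+\mc{B})^{k+1}=(\mc{A}^{\ep}+\mc{B}^{\ep})\n(\mc{A}^{k+1}+\mc{B}^{k+1})$ annihilates the two cross terms and leaves $\mc{A}^k+\mc{B}^k=(\mc{A}+\mc{B})^k$. For $(C2)$, the annihilation relations force $\mc{A}^{\ep}\n\mc{B}^{\ep}=\mc{B}^{\ep}\n\mc{A}^{\ep}=\mc{O}$, whence $\mc{X}^2=(\mc{A}^{\ep})^2+(\mc{B}^{\ep})^2$ and $(\mc{A}+\mc{B})\n\mc{X}^2=\mc{A}\n(\mc{A}^{\ep})^2+\mc{B}\n(\mc{B}^{\ep})^2=\mc{A}^{\ep}+\mc{B}^{\ep}=\mc{X}$. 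For $(3^T)$, the relations $\mc{A}\n\mc{B}^{\ep}=\mc{B}\n\mc{A}^{\ep}=\mc{O}$ reduce $(\mc{A}+\mc{B})\n\mc{X}$ to $\mc{A}\n\mc{A}^{\ep}+\mc{B}\n\mc{B}^{\ep}$, a sum of two Hermitian tensors, which is therefore Hermitian. Uniqueness (Theorem~\ref{thm4.3}) then gives $(\mc{A}+\mc{B})^{\ep}=\mc{A}^{\ep}+\mc{B}^{\ep}$.
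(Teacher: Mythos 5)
Your proposal is correct, and it reaches the conclusion by a genuinely different route than the paper. The computational heart is the same: the four annihilation relations, and in particular the two nontrivial ones $\mc{A}^{\ep}\n\mc{B}=\mc{O}$ and $\mc{B}^{\ep}\n\mc{A}=\mc{O}$, which you derive exactly as the paper does, from $\mc{A}^{\ep}=\mc{A}^{\ep}\n\mc{A}\n\mc{A}^{\ep}$ (Lemma \ref{lm4.2}(b)), the Hermitian condition $(3^T)$, and the hypothesis $\mc{A}^*\n\mc{B}=\mc{O}$ (together with its conjugate $\mc{B}^*\n\mc{A}=\mc{O}$). After that the two arguments diverge. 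The paper never verifies the axioms for $\mc{A}^{\ep}+\mc{B}^{\ep}$ directly: it shows that $(\mc{A}^{\ep})^m+(\mc{B}^{\ep})^m$, with $m$ the larger of the two indices, is a $\{1^T,3^T\}$-inverse of $(\mc{A}+\mc{B})^m$, and then feeds this into the representation $\mc{A}^{\ep}=\mc{A}^\D\n\mc{A}^{m}\n(\mc{A}^m)^{(1,3)}$ of Theorem \ref{thm4.5}, using additivity of the Drazin inverse (Theorem \ref{ad-draz}) and the identity $(\mc{A}^m)^{\core}=(\mc{A}^{\ep})^m$ to simplify. You instead check the three defining equations $(EP)$, $(C2)$, $(3^T)$ for $\mc{X}=\mc{A}^{\ep}+\mc{B}^{\ep}$ and invoke uniqueness (Theorem \ref{thm4.3}). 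Your route is more elementary and self-contained: it needs only Lemma \ref{lm4.2}(b) and uniqueness, and avoids both Theorem \ref{thm4.5} and the external Drazin-additivity result; the paper's route buys brevity by recycling machinery it has already built.

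One assertion you should tighten: you claim $\ind{\mc{A}+\mc{B}}=k$ with $k=\max\{\ind{\mc{A}},\ind{\mc{B}}\}$, which is neither proved nor actually needed (and equality is not obvious). Since Definition \ref{epdef} states $(EP)$ at the exact index $j=\ind{\mc{A}+\mc{B}}$, what you need is that your verified identity $\mc{X}\n(\mc{A}+\mc{B})^{k+1}=(\mc{A}+\mc{B})^k$ descends to exponent $j$. This is a short repair: the identity forces $\rra{(\mc{A}+\mc{B})^k}=\rra{(\mc{A}+\mc{B})^{k+1}}$, hence $j\le k$; then $\mathfrak{R}\bigl((\mc{A}+\mc{B})^j\bigr)=\mathfrak{R}\bigl((\mc{A}+\mc{B})^{k+1}\bigr)$, so Lemma \ref{range-stan} gives $(\mc{A}+\mc{B})^j=(\mc{A}+\mc{B})^{k+1}\n\mc{U}$ for some $\mc{U}$, and therefore $\mc{X}\n(\mc{A}+\mc{B})^{j+1}=\mc{X}\n(\mc{A}+\mc{B})^{k+1}\n(\mc{A}+\mc{B})\n\mc{U}=(\mc{A}+\mc{B})^{k+1}\n\mc{U}=(\mc{A}+\mc{B})^j$. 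To be fair, the paper glosses over the same point: applying Theorem \ref{thm4.5} to $\mc{A}+\mc{B}$ with exponent $m$ tacitly assumes $m\ge\ind{\mc{A}+\mc{B}}$, which is likewise left unjustified there.
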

 \begin{proof}
    Let us assume $\mc{A}\n\mc{B}=\mc{O}=\mc{B}\n\mc{A},$  and $\mc{A}^*\n\mc{B}=\mc{O}=(\mc{A}^*\n\mc{B})^*=\mc{B}^*\n\mc{A}.$
 Using these assumptions, we obtain
 $$
 \aligned
 &\mc{A}\n\mc{B}^{\ep}=\mc{A}\n\mc{B}\n\left(\mc{B}^{\ep}\right)^2=\mc{O},\\
 &\mc{B}\n\mc{A}^{\ep}=\mc{B}\n\mc{A}\n\left(\mc{A}^{\ep}\right)^2=\mc{O},\\
 & \mc{B}^{\ep}\n\mc{A}=\mc{B}^{\ep}\n\mc{B}\n\mc{B}^{\ep}\n\mc{A}=\mc{B}^{\ep}\n\left(\mc{B}^{\ep}\right)^*\n\mc{B}^*\n\mc{A}=\mc{O},\\ &\mc{A}^{\ep}\n\mc{B}=\mc{A}^{\ep}\n\mc{A}\n\mc{A}^{\ep}\n\mc{B}=\mc{A}^{\ep}\n\left(\mc{A}^{\ep}\right)^*\n\mc{A}^*\n\mc{B}=\mc{O},\\ &\mc{A}^{\ep}\n\mc{B}^{\ep}=\mc{A}^{\ep}\n\left(\mc{A}^{\ep}\right)^*\n\mc{A}^*\n\mc{B}\n\left(\mc{B}^{\ep}\right)^2=\mc{O},\\
&\mc{B}^{\ep}\n\mc{A}^{\ep}=\mc{B}^{\ep}\n\left(\mc{B}^{\ep}\right)^*\n\mc{B}^*\n\mc{A}\n\left(\mc{A}^{\ep}\right)^2=\mc{O}.
\endaligned
$$
Let $m=\max\{\mbox{ind}(\mc{A},\mc{B}\}.$
Then by Lemma \ref{lm4.2}, $\mc{A}^m\n\left(\mc{A}^{\ep}\right)^m\n\mc{A}^m=\mc{A}^m$ and $\mc{B}^m\n\left(\mc{B}^{\ep}\right)^m\n\mc{B}^m=\mc{B}^m.$
Now we see that
    \begin{equation*}
    \aligned
    (\mc{A}&+\mc{B})^m\n\left((\mc{A}^{\ep})^m+(\mc{B}^{\ep})^m\right)\n(\mc{A}+\mc{B})^m=(\mc{A}^m+\mc{B}^m)\n\left((\mc{A}^{\ep})^m+(\mc{B}^{\ep})^m\right)\n(\mc{A}^m+\mc{B}^m)\\
    &=\left(\mc{A}^m\n(\mc{A}^{\ep})^m+\mc{B}^m\n(\mc{B}^{\ep})^m\right)\n(\mc{A}^m+\mc{B}^m)\\
    &=\left(\mc{A}\n\mc{A}^{\ep}+\mc{B}\n\mc{B}^{\ep}\right)\n(\mc{A}^m+\mc{B}^m)\\
    &=\mc{A}\n\mc{A}^{\ep}\n\mc{A}^m+\mc{B}\n\mc{B}^{\ep}\n\mc{B}^m=\mc{A}^m\n(\mc{A}^{\ep})^m\n\mc{A}^m+\mc{B}^m\n(\mc{B}^{\ep})^m\n\mc{B}^m\\
    &=\mc{A}^m+\mc{B}^m,
    \endaligned
    \end{equation*}
and
    \begin{equation*}
    \aligned
    \left((\mc{A}+\mc{B})^m\n\left((\mc{A}^{\ep})^m+(\mc{B}^{\ep})^m\right)\right)^*&=\left(\mc{A}\n\mc{A}^{\ep}+\mc{B}\n\mc{B}^{\ep}\right)^*\\
    &=\left(\mc{A}\n\mc{A}^{\ep}\right)^*+\left(\mc{B}\n\mc{B}^{\ep}\right)^*=\mc{A}\n\mc{A}^{\ep}+\mc{B}\n\mc{B}^{\ep}\\
    &=(\mc{A}+\mc{B})^m\n\left((\mc{A}^{\ep})^m+(\mc{B}^{\ep})^m\right).
    \endaligned
    \end{equation*}
Thus $(\mc{A}^{\ep})^m+(\mc{B}^{\ep})^m$ is $\{1^T,3^T\}$ inverse of $(\mc{A}+\mc{B})^m.$
So, by Theorem \ref{thm4.5}, we obtain
\begin{equation*}
\aligned
(\mc{A}+\mc{B})^{\ep}&=(\mc{A}+\mc{B})^D\n(\mc{A}+\mc{B})^m\n\left((\mc{A}^{\ep})^m+(\mc{B}^{\ep})^m\right)\\
&=(\mc{A}^D+\mc{B}^D)\n(\mc{A}^m+\mc{B}^m)\n\left((\mc{A}^{\ep})^m+(\mc{B}^{\ep})^m\right)\\
&=(\mc{A}^D\n\mc{A}^m+\mc{B}^D\n\mc{B}^m)\n\left((\mc{A}^{\ep})^m+(\mc{B}^{\ep})^m\right)\\
&= \mc{A}^D\n\mc{A}^m\n\left(\mc{A}^{\ep}\right)^m+\mc{B}^D\n\mc{B}^m\n\left(\mc{B}^{\ep}\right)^m\\
&= \mc{A}^D\n\mc{A}^m\n\left(\mc{A}^{m}\right)^{\core}+\mc{B}^D\n\mc{B}^m\n\left(\mc{B}^{m}\right)^{\core}\\
&=\mc{A}^D\n\mc{A}^m\n\left(\mc{A}^{m}\right)^{(1,3)}+\mc{B}^D\n\mc{B}^m\n\left(\mc{B}^{m}\right)^{(1,3)}=\mc{A}^{\ep}+\mc{B}^{\ep}.
\endaligned
\end{equation*}
This completes the proof.
 \end{proof}

In the case of the Drazin inverse, the following result was proved in \cite{DrazBehAsJ}.
 \begin{theorem}\label{ad-draz}
    If $\mc{A}\n\mc{B}=\mc{O}=\mc{B}\n\mc{A},$ then $\left(\mc{A}+\mc{B}\right)^\D=\mc{A}^\D+\mc{B}^\D.$
 \end{theorem}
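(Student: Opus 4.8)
The plan is to verify that $\mc{X}:=\mc{A}^\D+\mc{B}^\D$ satisfies the three defining equations of the Drazin inverse of $\mc{C}:=\mc{A}+\mc{B}$, namely $\mc{C}^{k+1}\n\mc{X}=\mc{C}^k$, $\mc{X}\n\mc{C}\n\mc{X}=\mc{X}$ and $\mc{X}\n\mc{C}=\mc{C}\n\mc{X}$, and then to invoke the uniqueness of the Drazin inverse. Throughout I would set $k:=\max\{\ind{\mc{A}},\ind{\mc{B}}\}$.

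First I would record the relevant annihilation identities. From $\mc{A}\n\mc{B}=\mc{O}=\mc{B}\n\mc{A}$ an immediate induction gives $\mc{A}^j\n\mc{B}=\mc{O}=\mc{B}\n\mc{A}^j$ and $\mc{B}^j\n\mc{A}=\mc{O}=\mc{A}\n\mc{B}^j$ for every $j\geq 1$. The key step is to promote these to the Drazin inverses. Using that $\mc{A}^\D$ commutes with $\mc{A}$, so that $\mc{A}^\D=(\mc{A}^\D)^{k+1}\n\mc{A}^k=\mc{A}^k\n(\mc{A}^\D)^{k+1}$, I would write
\[
\mc{A}^\D\n\mc{B}=(\mc{A}^\D)^{k+1}\n\mc{A}^k\n\mc{B}=\mc{O},\qquad \mc{B}\n\mc{A}^\D=\mc{B}\n\mc{A}^k\n(\mc{A}^\D)^{k+1}=\mc{O},
\]
and symmetrically $\mc{A}\n\mc{B}^\D=\mc{B}^\D\n\mc{A}=\mc{O}$; combining these also yields $\mc{A}^\D\n\mc{B}^\D=\mc{B}^\D\n\mc{A}^\D=\mc{O}$. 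This promotion is the part I expect to be the main obstacle, since it is exactly what makes all the cross terms disappear in the subsequent computations.

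Next I would note that the hypothesis forces $\mc{C}^n=(\mc{A}+\mc{B})^n=\mc{A}^n+\mc{B}^n$ for all $n\geq 1$, because every mixed term in the binomial expansion contains a factor $\mc{A}\n\mc{B}$ or $\mc{B}\n\mc{A}$ and hence vanishes. With this and the annihilation identities in hand, the three verifications are short: for commutativity, $\mc{X}\n\mc{C}=\mc{A}^\D\n\mc{A}+\mc{B}^\D\n\mc{B}=\mc{A}\n\mc{A}^\D+\mc{B}\n\mc{B}^\D=\mc{C}\n\mc{X}$; for the outer-inverse equation the cross terms $\mc{A}^\D\n\mc{B}\n\mc{B}^\D$ and $\mc{B}^\D\n\mc{A}\n\mc{A}^\D$ drop out, leaving $\mc{X}\n\mc{C}\n\mc{X}=\mc{A}^\D\n\mc{A}\n\mc{A}^\D+\mc{B}^\D\n\mc{B}\n\mc{B}^\D=\mc{X}$; and for the $(1^k)$ equation, $\mc{C}^{k+1}\n\mc{X}=\mc{A}^{k+1}\n\mc{A}^\D+\mc{B}^{k+1}\n\mc{B}^\D=\mc{A}^k+\mc{B}^k=\mc{C}^k$, where I use $\mc{A}^{k+1}\n\mc{A}^\D=\mc{A}^k$, valid since $k\geq\ind{\mc{A}}$ (and likewise for $\mc{B}$). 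As all three equations hold, $\mc{X}$ is the Drazin inverse of $\mc{C}$, so $\left(\mc{A}+\mc{B}\right)^\D=\mc{A}^\D+\mc{B}^\D$, completing the proof.
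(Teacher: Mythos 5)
The paper itself contains no proof of Theorem \ref{ad-draz}: it is quoted as a known result from the external reference \cite{DrazBehAsJ}, so there is nothing internal to compare your argument against line by line. Your self-contained verification is correct. The promotion step is sound, since $(2^T)$ and $(5^T)$ give $\mc{A}^\D=(\mc{A}^\D)^{k+1}\n\mc{A}^k=\mc{A}^k\n(\mc{A}^\D)^{k+1}$, whence $\mc{A}^\D\n\mc{B}=\mc{B}\n\mc{A}^\D=\mc{A}\n\mc{B}^\D=\mc{B}^\D\n\mc{A}=\mc{A}^\D\n\mc{B}^\D=\mc{B}^\D\n\mc{A}^\D=\mc{O}$; the binomial collapse $(\mc{A}+\mc{B})^n=\mc{A}^n+\mc{B}^n$ follows by induction from $\mc{A}^j\n\mc{B}=\mc{B}^j\n\mc{A}=\mc{O}$; and the three defining equations then check out exactly as you wrote them. (A slightly shorter promotion is available: commutativity gives $\mc{B}^\D=\mc{B}\n(\mc{B}^\D)^2$, so $\mc{A}\n\mc{B}^\D=\mc{A}\n\mc{B}\n(\mc{B}^\D)^2=\mc{O}$ at once; this is precisely the trick the paper uses, via $(C2)$, in its proof of the neighbouring core-EP additivity theorem.) It is worth noting that your argument explains why Theorem \ref{ad-draz} needs no adjoint hypothesis while the core and core-EP sum theorems require $\mc{A}^*\n\mc{B}=\mc{O}$: in the paper's core-EP proof the cross terms are killed using $(3^T)$, e.g. $\mc{B}^{\ep}\n\mc{A}=\mc{B}^{\ep}\n(\mc{B}^{\ep})^*\n\mc{B}^*\n\mc{A}=\mc{O}$, whereas in the Drazin case commutativity $(5^T)$ does that job on both sides. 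One small point you should make explicit: you verify $((1^k)^T)$ at $k=\max\{\ind{\mc{A}},\ind{\mc{B}}\}$, which may be strictly larger than $\ind{\mc{A}+\mc{B}}$, while the paper's definition of the Drazin inverse states that equation at the exact index. This is harmless --- from $\mc{C}^{k+1}\n\mc{X}=\mc{C}^k$ one gets $\mathfrak{R}(\mc{C}^k)\subseteq\mathfrak{R}(\mc{C}^{k+1})$, so $\ind{\mc{A}+\mc{B}}\leq k$, and the three equations taken at any $k$ at or above the index still determine the Drazin inverse uniquely --- but the sentence in which you invoke uniqueness should say so.
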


 \section{Numerical examples}\label{SecExamples}

 In this section, we present several numerical examples to verify selected properties discussed in the previous sections. All examples were carefully implemented in MATLAB.

\begin{example}\label{Exm34a}\rm
	This example is aimed to the verification of parts (a), (b), (d) and (g) of Theorem \ref{thm3.1}.
	Let $\mathcal{A}\in \mathbb{R}^{(2\times 2)\times (2\times 2)}$ with
	$$
	\aligned
	\mathcal{A}(:,:,1,1) &=10^2\cdot\bmatrix
	0.985940927109977  & 1.682512984915278\\
	1.420272484319284 &  1.962489222569553\endbmatrix,\
	\mathcal{A}(:,:,2,1) =\bmatrix
	0 &  0\\
	0 & 0 \endbmatrix,\\
	\mathcal{A}(:,:,1,2) &=10^2\cdot \bmatrix
	8.929224052859770 &  5.557379427193866\\
	7.032232245562910 &  1.844336677576532\endbmatrix,\
	\mathcal{A}(:,:,2,2) =\bmatrix
	0 & 0\\
	0 & 0\endbmatrix,
	\endaligned
	$$		
	\textbf{Verification of part (a) of Theorem} \ref{thm3.1}:
	
	We shall compute the core inverse 	$\mc{A}^{\core}$ with the formula
	$\mc{A}^{\core}=\mc{A}^{\#}\n\mc{A}\n\mc{A}^\dg$, which turns out to be $\mc{A}^{\core}=\mc{A}\n(\mc{A}^3)^\dg \n\mc{A}^2 \n\mc{A}^\dg$, since it holds	$\mc{A}^{\#}=\mc{A}(\mc{A}^3)^\dg \mc{A}$.
	We have that,
	$$
	\aligned
	\mc{A}^\dg(:,:,1,1) &=\bmatrix
	-0.002139621590719 &  0.000961949275511\\
	0       &            0\endbmatrix,\ \\
	\mc{A}^\dg(:,:,2,1) &= 10^{-3} \cdot\bmatrix
	0.154116174683626 &  0.400242571625946\\
	0            &       0\endbmatrix,\ \\
	\mc{A}^\dg(:,:,1,2) &=\bmatrix
	0.001721913469812  & 0.000005405961529\\
	0         &          0\endbmatrix,\ \\
	\mc{A}^\dg(:,:,2,2) &= \bmatrix
	0.004582706318709 & -0.000777570778470\\
	0               &    0\endbmatrix
	\endaligned	$$
	and
	$$
	\aligned
	(\mc{A}^3)^\dg(:,:,1,1) &=10^{-6} \cdot\bmatrix
	-0.169528528162183  &  0.042750108332711\\
	0       &            0\endbmatrix,\ \\
	(\mc{A}^3)^\dg(:,:,2,1) &= 10^{-7} \cdot\bmatrix
	-0.179043837563866  &  0.051654956705111\\
	0            &       0\endbmatrix,\ \\
	(\mc{A}^3)^\dg(:,:,1,2) &=10^{-7} \cdot\bmatrix
	0.864318723074744   & -0.207154961229323\\
	0         &          0\endbmatrix,\ \\
	(\mc{A}^3)^\dg(:,:,2,2) &= 10^{-6} \cdot\bmatrix
	0.280825799119855 & -0.069038738080892\\
	0               &    0\endbmatrix.
	\endaligned	$$
	Then, the core inverse of $\mc{A}$ is the following,
	$$
	\aligned
	\mc{A}^{\core}(:,:,1,1) &=\bmatrix
	-0.002139621590719 &  0.000961949275511\\
	-0.000303613542775 &  0.003332187328937\endbmatrix,\ \\
	\mc{A}^{\core}(:,:,2,1) &= 10^{-3} \cdot\bmatrix
	0.154116174683627 &  0.400242571625946 \\
	0.304669984681755 &  0.532596423124327\endbmatrix,\ \\
	\mc{A}^{\core}(:,:,1,2) &=\bmatrix
	0.001721913469812 &  0.000005405961529 \\
	0.000713871950958 & -0.001398898974183\endbmatrix,\ \\
	\mc{A}^{\core}(:,:,2,2) &= \bmatrix
	0.004582706318709 & -0.000777570778470 \\
	0.001422901360057 & -0.005026199525584\endbmatrix.
	\endaligned	$$
	Now, we shall verify the identities $(C1),\ (C2),\ (3^T)$ for the resulted core inverse  $\mc{A}^{\core}$.
	\\
	We have that
	$$
	\aligned
	\mathcal{A}^2(:,:,1,1) &=10^5\cdot\bmatrix
	1.599561492590487  & 1.100922146057665\\
	1.323212683603806  & 0.503801885212158\endbmatrix,\
	\mathcal{A}^2(:,:,2,1) &=\bmatrix
	0 &  0\\
	0 & 0 \endbmatrix,\\
	\mathcal{A}^2(:,:,1,2) &=10^5\cdot \bmatrix
	5.842677349321679 &  4.590800151195201\\
	5.176271403733929 &  2.777318467842903\endbmatrix,\
	\mathcal{A}^2(:,:,2,2) &=\bmatrix
	0 & 0\\
	0 & 0\endbmatrix.
	\endaligned
	$$		Then, we have that $\mc{A}^{\core}\n\mc{A}^2=\mc{A}$, hence $(C1)$ is true.
	\\
	Also, it holds that $$
	\aligned
	(\mathcal{A}^{\core})^2(:,:,1,1) &=10^{-4}\cdot\bmatrix
	0.214580181358990 & -0.047655377387934\\
	0.059851986462700 & -0.253852316667977\endbmatrix,\\
	(\mathcal{A}^{\core})^2(:,:,2,1) &=10^{-5}\cdot\bmatrix
	0.284712034659642 & -0.014177387864685\\
	0.108958616205944 & -0.256102470356689 \endbmatrix,\\
	(\mathcal{A}^{\core})^2(:,:,1,2) &=10^{-4}\cdot \bmatrix
	-0.099756585933128 &  0.030298875490167\\
	-0.022919369812014 &  0.131415268596208\endbmatrix,\\
	(\mathcal{A}^{\core})^2(:,:,2,2) &=10^{-4}\cdot\bmatrix
	0.339584711910359 &  0.088818590828190\\
	-0.086647284748517 &  0.423786927376011\endbmatrix,
	\endaligned
	$$
	hence, we have that $\mc{A}\n(\mc{A}^{\core})^2=\mc{A}^{\core}$ and $(C2)$ is true.
	\\
	Finally, it holds that
	$$
	\aligned
	\mc{A}\n\mathcal{A}^{\core}(:,:,1,1) &=\bmatrix
	0.647992011370661 &  0.174597600453917\\
	0.372580504169106 & -0.242482598137053\endbmatrix,\\
	\mc{A}\n\mathcal{A}^{\core}(:,:,2,1) &=\bmatrix
	0.372580504169107 &  0.248360229853187\\
	0.303348568052670 &  0.104063338661755 \endbmatrix,\\
	\mc{A}\n\mathcal{A}^{\core}(:,:,1,2) &= \bmatrix
	0.174597600453919 &  0.292718475124185\\
	0.248360229853188 &  0.338920703982751\endbmatrix,\\
	\mc{A}\n\mathcal{A}^{\core}(:,:,2,2) &=\bmatrix
	-0.242482598137050 &  0.338920703982753\\
	0.104063338661757 &  0.755940945452490\endbmatrix.
	\endaligned
	$$
	Therefore, equation $(3^T)$ is true.
	
	\textbf{Verification of part (d) of Theorem \ref{thm3.1}: }
	
	According to our calculations for part (a), it is easy to see that,
	$$
	\aligned
\mathcal{A}^{\core}\n\mc{A}^2\n\mc{A}^{\dg}(:,:,1,1)=\mc{A}^2\n\mc{A}^{\dg}\n\mathcal{A}^{\core}(:,:,1,1) &=\bmatrix
	0.647992011370665 &  0.174597600453918\\
	0.372580504169107 & -0.242482598137055\endbmatrix,\\
\mathcal{A}^{\core}\n\mc{A}^2\n\mc{A}^{\dg}(:,:,2,1)=\mc{A}^2\n\mc{A}^{\dg}\n\mathcal{A}^{\core}(:,:,2,1) &=\bmatrix
	  0.372580504169109 &  0.248360229853187\\
	0.303348568052670 &  0.104063338661752 \endbmatrix,\\
	\mathcal{A}^{\core}\n\mc{A}^2\n\mc{A}^{\dg}(:,:,1,2)=\mc{A}^2\n\mc{A}^{\dg}\n\mathcal{A}^{\core}(:,:,1,2) &= \bmatrix
	0.174597600453919 &  0.292718475124185\\
	0.248360229853188 &  0.338920703982751\endbmatrix,\\
\mathcal{A}^{\core}\n\mc{A}^2\n\mc{A}^{\dg}(:,:,2,2)=\mc{A}^2\n\mc{A}^{\dg}\n\mathcal{A}^{\core}(:,:,2,2) &=\bmatrix
	-0.242482598137050 &  0.338920703982753\\
	0.104063338661757 &  0.755940945452490\endbmatrix.
	\endaligned
	$$
Hence, from part (c) we have $\mc{A}^{\core}\n\left(\mc{A}^{\core}\right)^{\dg}=	
\left(\mc{A}^{\core}\right)^{\dg}\n\mc{A}^{\core}$ and $\mc{A}^{\core}$ is EP.	
	
\smallskip
	\textbf{Verification of part (g) of Theorem \ref{thm3.1}:}	
	
	Since, it holds	$\mc{A}^{\#}=\mc{A}(\mc{A}^3)^\dg \mc{A}$, we have that
		$$
	\aligned
	\mc{A}^\#(:,:,1,1) &=\bmatrix
	-0.005822728386101 &  0.001762848163527\\
	-0.001341208843236  & 0.007661282558445\endbmatrix,\
	\mc{A}^\#(:,:,2,1) &= \bmatrix
0& 0\\
	0            &       0\endbmatrix,\ \\
	\mc{A}^\#(:,:,1,2) &=\bmatrix
	0.009355568940286 & -0.001033016783992 \\
	0.003238756270141 & -0.009348711331342
\endbmatrix,\
	\mc{A}^\#(:,:,2,2) &= \bmatrix
	0 & 0\\
	0               &    0\endbmatrix
	\endaligned	$$
	and
	$$
\aligned
\mathcal{A}^{\core}\n\mc{A}(:,:,1,1)=\mc{A}^\#\n\mc{A}(:,:,1,1) &=\bmatrix
 1.000000000000006 & -0.000000000000000\\
0.412689678913960 & -0.817575617868222
\endbmatrix,\\
\mathcal{A}^{\core}\n\mc{A}(:,:,2,1)=\mc{A}^\#\n\mc{A}(:,:,2,1) &=\bmatrix
0 &  0\\
0 &  0 \endbmatrix,\\
\mathcal{A}^{\core}\n\mc{A}(:,:,1,2)=\mc{A}^\#\n\mc{A}(:,:,1,2) &= \bmatrix
 -0.000000000000000 &  1.000000000000001\\
0.602304320244615 &  1.645497247305027
\endbmatrix,\\
\mathcal{A}^{\core}\n\mc{A}(:,:,2,2)=\mc{A}^\#\n\mc{A}(:,:,2,2) &=\bmatrix
0 &  0\\
0 &  0\endbmatrix.
\endaligned
$$	
Hence, part (g) of Theorem \ref{thm3.1} is verified.

\end{example}

%

 \begin{example}\label{Exm41}
In this example we will verify the representation $\mc{A}^{\ep} = \mc{A}^\D\n\mc{A}^{l}\n\left(\mc{A}^l\right)^{\dg},\ l\geq \ind{\mc{A}}$, from Theorem \ref{thm4.4}.
We consider the tensor $\mathcal{A}\in \mathbb{R}^{(2\times 2)\times (2\times 2)}$ with
$$
\aligned
\mathcal{A}(:,:,1,1) &=\bmatrix
1  &0 \\
0 &  0\endbmatrix,\
\mathcal{A}(:,:,2,1) =\bmatrix
1 &  0\\
0 & 0 \endbmatrix,\\
\mathcal{A}(:,:,1,2) &= \bmatrix
0 &  0\\
0 &  0\endbmatrix,\
\mathcal{A}(:,:,2,2) =\bmatrix
0 & 1\\
0 & 0\endbmatrix.
\endaligned
$$		
We observe that  $\rra{\mathcal{A}}=2$, while $\rra{\mathcal{A}^2}=\rra{\mathcal{A}^3}=1$. Hence, $\ind{\mc{A}}=2.$
The Drazin inverse of a tensor is defined by the formula $\mc{A}^\D=\mc{A}^l\left( \mc{A}^{2l+1}\right)^\dagger \mc{A}^l.$
So, we need to verify the identities $(EP),\ (C2)$ and $(3^T)$ for the tensor defined by the formula
$\mc{A}^{\ep} = \mc{A}^2\n\left( \mc{A}^5\right)^\dagger\n \mc{A}^{4}\n\left(\mc{A}^2\right)^{\dg}.$
It holds, $(\mc{A}^2)^\dg = (\mc{A}^5)^\dg$, where
$$
\aligned
(\mc{A}^2)^\dg(:,:,1,1) &=\bmatrix
0.5  &0 \\
0.5 &  0\endbmatrix,\
(\mc{A}^2)^\dg(:,:,2,1) =\bmatrix
0 &  0\\
0 & 0 \endbmatrix,\\
(\mc{A}^2)^\dg(:,:,1,2) &= \bmatrix
0 &  0\\
0 &  0\endbmatrix,\
(\mc{A}^2)^\dg(:,:,2,2) =\bmatrix
0 & 0\\
0 & 0\endbmatrix.
\endaligned
$$	
Therefore, according to the identity $\mc{A}^{\ep} = \mc{A}^2\n\left( \mc{A}^5\right)^\dagger\n \mc{A}^{4}\n\left(\mc{A}^2\right)^{\dg},$ the
core-EP inverse of $\mc{A}$ is,
$$
\aligned
\mc{A}^{\ep}(:,:,1,1) &=\bmatrix
1 &   0\\
0 &  0\endbmatrix,\ \ \mc{A}^{\ep}(:,:,2,1) &=\bmatrix
0 &  0\\
0 &  0 \endbmatrix,\\
\mc{A}^{\ep}(:,:,1,2) &= \bmatrix
	   0 &  0 \\
0 & 0 \endbmatrix,\ \
\mc{A}^{\ep}(:,:,2,2) &=\bmatrix
0 & 0\\
0&  0\endbmatrix.
\endaligned
$$
Because of the previous calculations, the validity of $(EP),\ (C2)$ and $(3^T)$ is clear.
  \end{example}

	\begin{example}\label{Exm41Cor}
The aim of this example is to verify the representation $ \mc{A}^{\ep} = \mc{A}^l\n \left(\mc{A}^{l+1}\right)^\dg$, presented in  Corollary \ref{CorEPCore}.
The input tensor $\mathcal{A}\in \mathbb{R}^{(2\times 2)\times (2\times 2)}$ with the entries
		$$
		\aligned
		\mathcal{A}(:,:,1,1) &=\bmatrix
		2  &8 \\
		4 &  16\endbmatrix,\
		\mathcal{A}(:,:,2,1) =\bmatrix
		1 &  4\\
		2 & 8 \endbmatrix,\\
		\mathcal{A}(:,:,1,2) &= \bmatrix
		0 &  2\\
		1 &  4\endbmatrix,\
		\mathcal{A}(:,:,2,2) =\bmatrix
		0 & 1\\
		0 & 2\endbmatrix.
		\endaligned
		$$		
It holds that $\rra{\mathcal{A}}=3$ and $\rra{\mathcal{A}^2}=\rra{\mathcal{A}^3}=2$.
Hence, $\ind{\mc{A}}=2.$
		For $l=2$ we will compute the core-EP inverse of $\mc{A}$ with the formula  $\mc{A}^{\ep}=\mc{A}^l\n\left( \mc{A}^{l+1}\right)^\dagger.$
		So, we need to verify the identities $(EP),\ (C2)$ and $(3^T)$ for the tensor defined by the formula
		$\mc{A}^{\ep}=\mc{A}^2\n\left( \mc{A}^{3}\right)^\dagger.$
				\\
		We have that
		$$
		\aligned
		\mathcal{A}^2(:,:,1,1) &=\bmatrix
		 8  &  64\\
		24  & 128
\endbmatrix,\
		\mathcal{A}^2(:,:,2,1) =\bmatrix
	4  &  32\\
	12 &   64\endbmatrix,\\
		\mathcal{A}^2(:,:,1,2) &=\bmatrix
		1   & 12\\
		4   & 24
\endbmatrix,\
		\mathcal{A}^2(:,:,2,2) =\bmatrix
		  0   &  4\\
		1  &   8
\endbmatrix,
		\endaligned
		$$	
		$$
		\aligned
		\mathcal{A}^3(:,:,1,1) &=\bmatrix
		40  &  416\\
		144  & 832
\endbmatrix,\
		\mathcal{A}^3(:,:,2,1) =\bmatrix
		20  &  208\\
		72 &   416\endbmatrix,\\
		\mathcal{A}^3(:,:,1,2) &=\bmatrix
		6   & 72\\
		24   & 144
\endbmatrix,\
		\mathcal{A}^3(:,:,2,2) =\bmatrix
		1   &  20\\
		6  &   40
\endbmatrix
		\endaligned
		$$	
		and
		$$
		\aligned
		(\mc{A}^3)^\dg(:,:,1,1) &=\bmatrix
	0.048094852307653 & -0.270803595540296\\
	0.024047426153826 & -0.276815452078752
\endbmatrix,\\
		(\mc{A}^3)^\dg(:,:,2,1) &=\bmatrix
		0.047143242930443 & -0.264910690173993\\
		0.023571621465222 & -0.270803595540299 \endbmatrix,\\
		(\mc{A}^3)^\dg(:,:,1,2) &= \bmatrix
		  -0.003806437508841 &  0.023571621465222\\
		-0.001903218754421 &  0.024047426153827
\endbmatrix,\\
		(\mc{A}^3)^\dg(:,:,2,2)& =\bmatrix
		 -0.007612875017682 &  0.047143242930443\\
		-0.003806437508841 &  0.048094852307653
\endbmatrix.
		\endaligned
		$$	
		Therefore, the core-EP inverse $\mc{A}^{\ep} = \mc{A}^2\n\left( \mc{A}^3\right)^\dagger$ of $\mc{A}$ is equal to
		$$
		\aligned
		\mc{A}^{\ep}(:,:,1,1) &=\bmatrix
	   0.210144927536232 & -0.509316770186336\\
	0.082815734989648 & -1.018633540372671
\endbmatrix,\\ \mc{A}^{\ep}(:,:,2,1) &=\bmatrix
	 0.206521739130437 & -0.490683229813671\\
	0.083850931677020 & -0.981366459627341
\endbmatrix,\\
		\mc{A}^{\ep}(:,:,1,2) &= \bmatrix
	-0.014492753623189 &  0.074534161490684\\
	0.004140786749482 &  0.149068322981367
 \endbmatrix,\\
		\mc{A}^{\ep}(:,:,2,2) &=\bmatrix
		 -0.028985507246377 &  0.149068322981367\\
		0.008281573498965 &  0.298136645962735
\endbmatrix.
		\endaligned
		$$
		Now, we shall verify the identities $(EP),\ (C2)$ and $(3^T)$ for the resulted core-EP inverse  $\mc{A}^{\ep}$.
		\\
		We have that
		$$
		\aligned
		\mc{A}^{\ep}\mathcal{A}^3(:,:,1,1) &=\bmatrix
		8  &  64\\
		24  & 128
\endbmatrix,\
		\mc{A}^{\ep}\mathcal{A}^3(:,:,2,1) =\bmatrix
		4  &  32\\
		12 &   64\endbmatrix,\\
		\mc{A}^{\ep}\mathcal{A}^3(:,:,1,2) &=\bmatrix
		1   & 12\\
		4   & 24
\endbmatrix,\
		\mc{A}^{\ep}\mathcal{A}^3(:,:,2,2) =\bmatrix
		0   &  4\\
		1  &   8
\endbmatrix,
		\endaligned
		$$			
Then, we have that $\mc{A}^{\ep}\n\mc{A}^3=\mc{A}^2$, hence $(EP)$ is true.
		\\
		Also, it holds that $$
		\aligned
		(\mathcal{A}^{\ep})^2(:,:,1,1) &=\bmatrix
	0.098171152518979 & -0.337474120082818\\
	0.013802622498275 & -0.674948240165636
\endbmatrix,\\
		(\mathcal{A}^{\ep})^2(:,:,2,1) &=\bmatrix
	0.096273291925468 & -0.329192546583857\\
	0.013975155279503 & -0.658385093167713
\endbmatrix,\\
		(\mathcal{A}^{\ep})^2(:,:,1,2) &= \bmatrix
			  -0.007591442374051  & 0.033126293995860\\
		0.000690131124914  & 0.066252587991719
\endbmatrix,\\
		(\mathcal{A}^{\ep})^2(:,:,2,2) &=\bmatrix
		  -0.015182884748102  & 0.066252587991719\\
		0.001380262249827 &  0.132505175983439
\endbmatrix.
		\endaligned
		$$
Hence, we have that $\mc{A}\n(\mc{A}^{\ep})^2=\mc{A}^{\ep}$ and $(C2)$ is satisfied.
		\\
		Finally, it holds that
		$$
		\aligned
		\mc{A}\n\mathcal{A}^{\ep}(:,:,1,1) &=\bmatrix
		0.503105590062112 & -0.024844720496894\\
	0.496894409937889  &-0.049689440993788
\endbmatrix,\\
		\mc{A}\n\mathcal{A}^{\ep}(:,:,2,1) &=\bmatrix
		0.496894409937894 &  0.024844720496893\\
		0.503105590062117 &  0.049689440993786
 \endbmatrix,\\
		\mc{A}\n\mathcal{A}^{\ep}(:,:,1,2) &= \bmatrix
				-0.024844720496895 &  0.198757763975156\\
		0.024844720496894  & 0.397515527950311
\endbmatrix,\\
		\mc{A}\n\mathcal{A}^{\ep}(:,:,2,2) &=\bmatrix
		-0.049689440993790 &  0.397515527950311\\
		0.049689440993788 &  0.795031055900622
\endbmatrix.
		\endaligned
		$$
		Therefore, the equation $(3^T)$ is true.
\end{example}

\section{Conclusion}\label{SecConclusion}

So far, the core and core-EP inverses in the matrix case are introduces in different ways.
Our research introduces the core and the core-EP inverses of complex square tensors by means of specific definitions.
Additionally, some of their characterizations, representations and properties are investigated.
The results are verified using specific algebraic approach, based on proposed definitions and previously verified properties.
The approach used here is new even in the matrix case.

Illustrative numerical examples are presented.

In order to finalize our conclusion, it will be useful to mention some possibilities for further research.\\
1. According to Theorem \ref{thm3.1}(e), the tensor core inverse $\mc{A}^{\core}$ is EP.
Therefore, it is reasonable to expect that $\mc{A}^{\core}$ inherits all properties of EP matrices stated in \cite{TianEP}.
These properties would be an interesting topic for further research.\\
2. It will be interesting to continue the results from \cite{PappasAlu} and investigate properties of the $\lambda$-Aluthge transform of the core inverse in both the matrix and the tensor case.\\
3. Also, the reverse order law of multiple tensor products would be an interesting area for further research.

\medskip
\noindent{\bf{Acknowledgments.}}\\
Ratikanta Behera acknowledges the support provided by Science and Engineering Research Board (SERB), Department of Science and Technology, India, under the Grant No. EEQ/2017/000747.\\
Predrag Stanimirovi\'c gratefully acknowledges support from the Ministry of Education, Science and Technological Development, Republic of Serbia, Grant No. 174013.
\bibliographystyle{abbrv}
\bibliographystyle{vancouver}

\begin{thebibliography}{10}

\bibitem{baks} O.~M. Baksalary, G.~Trenkler,
{\em Core inverse of matrices}, Linear Multilinear Algebra, {\bf 58} (2010), 681--697.

\bibitem{baks1}
O.~M. Baksalary and G.~Trenkler.
{\em  On a generalized core inverse}, Appl. Math. Comput. {\bf 236} (2014), 450--457.

\bibitem{Behera} R. Behera and D. Mishra,
{\em Further results on generalized inverses of tensors via the Einstein product}, Linear Multilinear Algebra {\bf 65} (2017), 1662--1682.

\bibitem{DrazBehAsJ}
R.~Behera, A.~K. Nandi, J.~K. Sahoo,
{\em Further results on the Drazin inverse of even-order tensors}, Submitted, arxiv.org/abs/1904.10783.


\bibitem{BrazellT}
M. Brazell, N. Li, C. Navasca, C. Tamon, {\em Solving multilinear systems via tensor inversion}, SIAM J. Matrix Anal. Appl. {\bf 34} (2013), 542--570.

\bibitem{chen2009}
J.~L. Chen, G.~F. Zhuang, Y.~M. Wei.
{\em The {D}razin inverse of a sum of morphisms}, Acta Math. Sci. Ser. A (Chin. Ed.) 29(3):538--552, 2009.

\bibitem{draz}
M.~P. Drazin,
{\em Pseudo-inverses in associative rings and semigroups}, Amer. Math. Monthly {\bf 65} (1958), 506--514.

\bibitem{ein}
A.~Einstein, {\em The foundation of the general theory of relativity}, Annalen der Physik {\bf 49} (1916), 769--822.


\bibitem{ferreyra}
D.~E. Ferreyra, F.~E. Levis, and N.~Thome.
{\em Revisiting the core {EP} inverse and its extension to rectangular matrices}, Quaest. Math. {\bf 41} (2018), 265--281.

\bibitem{gao2018}
Y.~Gao, J.~Chen.
{\em Pseudo core inverses in rings with involution}, Comm. Algebra {\bf 46} (2018), 38--50.

\bibitem{gao2019}
Y.~Gao, J.~Chen, P.~Patricio,
{\em Weighted core-ep inverse of a rectangular matrix}, arXiv preprint arXiv:1803.10186, 2018.

\bibitem{Gao} Y. F. Gao, J.L.  Chen and P. Patricio,
{\em Continuity of the core-EP inverse and its applications}, 	arXiv:1805.08696, 2018.

\bibitem{JiT} J. Ji, Y. Wei,
{\em Weighted Moore-Penrose inverses and fundamental theorem of even-order tensors with Einstein product}, Front. Math. China. {\bf 12} (2017), 1319--1337.

\bibitem{Ji18} J. Ji, Y. Wei,
{\em The Drazin inverse of an even-order tensor and its application to singular tensor equations}, Comput. Math. Appl. {\bf 75} (2018), 3402--3413.

\bibitem{kurata} H.~Kurata,
{\em Some theorems on the core inverse of matrices and the core partial ordering}, Appl. Math. Comput. {\bf 316} (2018), 43--51.

\bibitem{lai}
W.~Lai, D.~Rubin, E.~Krempl,
{\em Introduction to Continuum Mechanics}, Butterworth Heinemann, Oxford, 2009.

\bibitem{LiangT} M. Liang, B. Zheng, R. Zhao,
{\em Tensor inversion and its application to the tensor equations with Einstein product}, Linear Multilinear Algebra {\bf 67} (2019), 843--870.

\bibitem{Ma18} H. Ma,
{\em Optimal perturbation bounds for the core inverse}, Appl. Math. Comput. {\bf 336} (2018),  176--181.

\bibitem{Ma19} H. Ma,
{\em A characterization and perturbation bounds for the weighted core-EP inverse}, Quaestiones Mathematicae, 2019, DOI: 10.2989/16073606.2019.1584773

\bibitem{Ma191} H. Ma, P.S. Stanimirovi\'c,
{\em Characterizations, approximation and perturbations of the core-EP inverse}, Appl. Math. Comput., https://doi.org/10.1016/j.amc.2019.04.071.

\bibitem{Panigrahy} K. Panigrahy, D. Mishra,
{\em An extension of the Moore-Penrose inverse of a tensor via the Einstein product}, arXiv:1806.03655v1, 10 Jun 2018.

\bibitem{PappasAlu} D. Pappas, V.N. Katsikis, P.S. Stanimirović,
{\em The $\lambda$-Aluthge transform of EP matrices}, Filomat {\bf 32} (2018), 4403--4411.

\bibitem{prasad}
K.~M.~Prasad, K.~S. Mohana,
{\em Core-{EP} inverse}, Linear Multilinear Algebra {\bf 62} (2014), 792--802.

\bibitem{MD} K.M. Prasad, M.D. Raj,
{\em Bordering method to compute core-EP inverse}, Special Matrices, {\bf 6} (2018),  193--200.

\bibitem{CoreIter} K.M. Prasad, M.D. Raj, M. Vinay,
{\em Iterative method to find core-EP inverse}, Bulletin of Kerala Mathematics Association, Special Issue Vol. 16, No. 1 (2018), 139--152.

\bibitem{rakic}
D.~S. Raki\'{c}, N.~\v C. Din\v{c}i\'{c}, D.~S. Djordjevi\'{c},
{\em Group, {M}oore-{P}enrose, core and dual core inverse in rings with involution}, Linear Algebra Appl. {\bf 463} (2014), 115--133.

\bibitem{rakicAMC} D.~S. Raki\'{c}, N.~\v C. Din\v{c}i\'{c}, D.~S. Djordjevi\'{c},
{\em Core inverse and core partial order of Hilbert space operators} Appl. Math. Comput. {\bf 244} (2014), 283--302.

\bibitem{stan}
P.~Stanimirovic, M.~Ciric, V.~Katsikis, C.~Li, H.~Ma.
{\em Outer and (b,c) inverses of tensors}, Linear Multilinear Algebra, https://doi.org/10.1080/03081087.2018.1521783.

\bibitem{SunT} L. Sun, B. Zheng, C. Bu, Y. Wei,
{\em Moore-Penrose inverse of tensors via Einstein product}, Linear Multilinear Algebra {\bf 64} (2016), 686--698.

\bibitem{TianEP} Y. Tian, H. Wang,
{\em Characterizations of EP matrices and weighted-EP matrices}, Linear Algebra Appl. {\bf 434} (2011), 1295--1318.

\bibitem{wang2015}
H.~Wang, X.~Liu,
{\em Characterizations of the core inverse and the core partial ordering}, Linear Multilinear Algebra {\bf 63} (2015), 1829--1836.

\bibitem{xu2017}
S.~Xu, J.L. Chen, X.~Zhang,
{\em New characterizations for core inverses in rings with involution}, Front. Math. China {\bf 12} (2017), 231--246.

\bibitem{ZhouLimit} M.M. Zhou, J.L. Chen, T.T. Li, D.G. Wang,
{\em Three limit representations of the core-EP inverse}, Filomat {\bf 32} (2018), 5887--5894.

\end{thebibliography}

\end{document}